\newcommand{\reals}{\mathds{R}}
\newcommand{\naturals}{\mathds{N}}
\newcommand{\integers}{\mathds{Z}}
\newcommand{\setsep}{\;|\;}
\newcommand{\ddd}{\,\mathrm{d}}
\newcommand{\ddS}{\,\mathrm{d}S}
\newcommand{\dist}{\ensuremath{\operatorname{dist}}}
\newcommand{\sgn}{\ensuremath{\operatorname{sgn}}}
\newcommand{\ind}[1]{\mathds{1}_{#1}}
\DeclareMathOperator{\Div}{div}
\newcommand{\vx}{\mathbf{x}}
\newcommand{\vy}{\mathbf{y}}
\newcommand{\vn}{\mathbf{n}}
\newcommand{\vs}{\mathbf{s}}
\newcommand{\va}{\mathbf{a}}
\newcommand{\vb}{\mathbf{b}}
\newcommand{\veta}{\boldsymbol{\eta}}
\newcommand{\ve}[1]{\mathbf{#1}}
\newcommand{\mat}[1]{\mathbf{#1}}
\newcommand{\matA}{\mat{A}}
\newcommand{\matB}{\mat{B}}
\theoremstyle{definition}
\newtheorem{definition}{Definition}[section]
\theoremstyle{plain}
\newtheorem{theorem}[definition]{Theorem}
\newtheorem{proposition}[definition]{Proposition}
\newtheorem{question}[definition]{Question}
\newtheorem{corollary}[definition]{Corollary}
\newtheorem{lemma}[definition]{Lemma}
\newtheorem{example}[definition]{Example}
\theoremstyle{remark}
\newtheorem{remark}[definition]{Remark}
\definecolor{nonnegWeights}{RGB}{190,125,195}
\definecolor{linLay}{RGB}{100,125,210}
\DeclareMathOperator{\loss}{loss}
\DeclareMathOperator{\Loss}{Loss}
\DeclareMathOperator{\Var}{Var}
\DeclareMathOperator{\pVar}{pVar}
\DeclareMathOperator{\MVar}{MVar}
\DeclareMathOperator{\EMVar}{EMVar}
\DeclareMathOperator{\Lip}{Lip}
\DeclareMathOperator{\modi}{mod}
\DeclareMathOperator*{\esssup}{ess\,sup}
\definecolor{changedColor}{RGB}{25,220,160}
\def\softd{{\leavevmode\setbox1=\hbox{d}%
		\hbox to 1.05\wd1{d\kern-0.4ex{\char039}\hss}}}
\begin{document}

	\title{Estimatable variation neural networks and their application to scalar hyperbolic conservation laws}
		
	\author{\texorpdfstring{M\' aria Luk\' a\v cov\' a-Medvi{\leavevmode\setbox1=\hbox{d}%
				\hbox to 1.05\wd1{d\kern-0.3ex{\char039}\hss}}ov\' a}{M\' aria Luk\' a\v cov\' a-Medvid'ov\' a}$^{*}$, Simon Schneider$^{*}$
		}

\maketitle

\bigskip

\bigskip
\centerline{$^*$ Institute of Mathematics, Johannes Gutenberg-University Mainz}
\centerline{Staudingerweg 9, 55 128 Mainz, Germany}
\centerline{lukacova@uni-mainz.de, simon.schneider@uni-mainz.de}
	
	\begin{abstract}
		We introduce estimatable varation neural networks (EVNNs), a class of neural networks that allow a computationally cheap estimate on the $BV$ norm motivated by the space $BMV$ of functions with bounded M-variation. We prove a universal approximation theorem for EVNNs and discuss possible implementations. We construct sequences of loss functionals for scalar hyperbolic conservation laws for which a vanishing loss leads to convergence. Moreover, we show the existence of sequences of loss minimizing neural networks if the solution is an element of $BMV$.
		Several numerical test cases illustrate that it is possible to use standard techniques to minimize these loss functionals for EVNNs.
	\end{abstract}

{\bf Keywords:} neural network, total variation, convergence, universal approximation, scalar hyperbolic conservation laws

{\bf MSC codes:} 65M99, 35L65, 35L60

	\section{Introduction}\label{sec:introduction}
	
	While neural networks achieve impressive results over a wide variety of tasks, one major drawback of neural networks is their ``black box'' nature. Due to the complex construction based on compositions of linear and nonlinear mappings, there is a lack of control over the regularity of neural networks. 
	For arbitrary $k\in \naturals_0$,
 	upper bounds on $C^k$ norms of deep neural networks are easy to derive. 
 	But it is unclear if they are sharp enough to be of practical use. Additionally, 
 	if the $(k-1)$-th derivative of the target function is not Lipschitz continuous, the $C^k$ norm of the network is expected to blow-up during training. 
	
	In this work, we are concerned with using neural networks to approximate the solution $u$ of a scalar hyperbolic conservation law. It is well-known that solutions of scalar hyperbolic conservation laws can develop discontinuities even starting from smooth initial data. Therefore, $C^k$ based regularity cannot be expected for these functions.
	
	For training, it is natural to use sequences of loss functions $(\Loss_n)_{n\in \naturals}$ based on integration. Here, $n$ corresponds to the integration error tolerance and, e.g., the number of test functions used. Numerical integration with error estimates in terms of $C^k$ norms are unpractical in this setting. Therefore,
	integrals of neural networks can only be rigorously approximated with reasonable computational cost in an average sense, i.e., using Monte Carlo approximation.
	Moreover, possible oscillations of sequences of neural networks $(\eta_n)_{n\in\naturals}$ cannot be controlled. 
	This impacts the availabilty of theoretical results for neural network based approaches.
	Let $(\eta_n)_{n\in\naturals}$ be a sequence of neural networks and assume that $(\Loss_n)_{n\in\naturals}$ is any sequence of explicitly computable loss functions for which the computational cost does not depend on a $C^k$ norm of the network.
	For the reasons outlines above there do not exist results of the following kind for scalar hyperbolic conservation laws:
	\begin{theorem}[Informal]\label{thm:informal}
		Given a sequence of neural networks $(\eta_n)_{n\in\naturals}$ satisfying 
		\begin{equation*}
			\Loss_n(\eta_n)\xrightarrow{n\to\infty}0,
		\end{equation*}
		then $\eta_n \to u$ in a suitable sense.
	\end{theorem}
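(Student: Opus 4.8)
The plan is to turn the obstruction identified above — the impossibility of controlling oscillations of a \emph{generic} sequence $(\eta_n)_{n\in\naturals}$ — into the design principle for the network class, and thereby to make a precise version of the informal statement true. Concretely, I would work only with EVNNs, for which by construction there is a cheaply computable quantity, say $\EMVar(\eta)$, bounding the total variation $\Var(\eta)$ from above; this plays the role that the unusable $C^k$-based estimates cannot. The loss functionals $\Loss_n$ are then assembled from three ingredients: (i) a Monte Carlo estimate of the Kru\v zkov entropy residual, i.e.\ of $\int\big(|\eta_n-c|\,\partial_t\varphi+\sgn(\eta_n-c)\,(f(\eta_n)-f(c))\,\partial_x\varphi\big)\ddd x\ddd t$ tested against a finite family $\Phi_n$ of nonnegative test functions $\varphi$ and constants $c$, with $\bigcup_n\Phi_n$ dense in the relevant space; (ii) a Monte Carlo estimate of the initial-data mismatch $\|\eta_n(\cdot,0)-u_0\|_{L^1}$; and (iii) the built-in bound $\EMVar(\eta_n)$, used either as a hard constraint or as an additive penalty, so that any sequence with $\Loss_n(\eta_n)\to0$ is automatically bounded in $BV$ and in $L^\infty$.

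Given such a sequence, the first step is compactness: the uniform $BV$ and $L^\infty$ bounds coming from ingredient (iii) let me invoke Helly's selection theorem to extract a subsequence converging in $L^1_{loc}$, and pointwise a.e., to some limit $u\in BV$. The second step is to pass to the limit in the discretized residuals. Here one must control two errors simultaneously: the Monte Carlo integration error — shown to be small with high probability using the uniform bounds on $\eta_n$ and on $f(\eta_n)$, which bound the variance of the integrands — and the discretization of the test-function family. Since $\Loss_n(\eta_n)\to0$, for each fixed test function $\varphi$ one eventually has $\varphi$ well approximated inside $\Phi_n$ and the corresponding residual tends to $0$; combined with $L^1_{loc}$ convergence of $\eta_n$ and continuity of $f$, the limit $u$ satisfies the Kru\v zkov entropy inequalities and attains the initial datum $u_0$. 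The third step is uniqueness: by Kru\v zkov's theorem the entropy solution is unique, so the limit $u$ is independent of the chosen subsequence, hence the whole sequence converges to $u$ in $L^1_{loc}$; this is the ``suitable sense'' in the informal statement.

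The main obstacle I expect is making ingredient (iii) simultaneously cheap, rigorous, and tight: the bound $\Var(\eta)\le C\,\EMVar(\eta)$ must hold for \emph{all} EVNNs with $C$ independent of width and depth, must be evaluable without ever touching a $C^k$ norm, and yet must not be so loose that it prevents $\Loss_n$ from being driven to $0$ by genuine approximants of $u$. This is exactly where the $BMV$-motivated construction has to do the work, and where the universal approximation theorem for EVNNs is needed — to guarantee that $\inf\Loss_n\to0$ when $u\in BMV$, so that the hypothesis of the theorem is non-vacuous. A secondary technical point is the uniform-in-$n$ treatment of the Monte Carlo error: the number of samples defining $\Loss_n$ must grow fast enough, relative to $n$ and to a modulus controlling $\Phi_n$, that a union-bound / Borel--Cantelli argument upgrades convergence in expectation to almost-sure convergence along the sequence.
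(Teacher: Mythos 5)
Your skeleton---uniform $BV$ and $L^\infty$ bounds from the estimatable-variation penalty, compactness in $L^1$, passage to the limit in the entropy residuals, and uniqueness of the entropy solution---is the same three-step structure the paper uses in its concrete realization (Theorem~\ref{thm:convergence}). You also correctly identify the two roles the EVNN architecture must play: it must give a computable upper bound on variation, and the universal approximation theorem must guarantee $\inf\Loss_n\to 0$ so the hypothesis is non-vacuous. Two of your ingredient choices differ from the paper without harm: you test against a dense family of smooth test functions with the full Kru\v{z}kov family of entropies, while the paper tests against indicator functions of rectangular test volumes (finite-volume style cells whose mesh size tends to zero) and uses a \emph{single} strictly convex entropy--entropy-flux pair $(S,Q)$, invoking the Panov/De~Lellis--Otto--Westdickenberg/Krupa--Vasseur uniqueness theory rather than Kru\v{z}kov's. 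Both routes reach the same uniqueness conclusion in the scalar convex case.

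The genuine gap is in ingredient~(i): you propose Monte Carlo quadrature and then try to repair the resulting randomness with a union-bound/Borel--Cantelli argument. This reintroduces exactly the obstruction the paper identifies in Section~\ref{sec:introduction} as the reason Theorem~\ref{thm:informal} fails for generic architectures. Because the trained $\eta_n$ is selected by an optimizer that looks at the very same samples, a pointwise concentration inequality for fixed $\eta$ does not suffice; one needs uniform concentration over the (uncountable, high-dimensional) parameter space, which is precisely the step that remains unverifiable in the wPINN analysis of De~Ryck et al.\ that the paper discusses. The paper's central move is that $\EMVar$ buys a \emph{second} thing beyond compactness: a deterministic quadrature of Papageorgiou type (Theorem~\ref{papageorgiou_integration_theorem}) whose worst-case error is $O\bigl(\MVar/k\bigr)$, so the number of sample points needed to certify a tolerance $\mathcal{E}_{\text{int}}$ is explicitly computable from the network itself. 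This makes $\Loss_n$ a fully deterministic, explicitly evaluable functional, and the conclusion of Theorem~\ref{thm:convergence} is deterministic, not an almost-sure statement over sampling randomness. Without replacing Monte Carlo by this deterministic, variation-controlled quadrature, your version of Theorem~\ref{thm:informal} degrades to a probabilistic claim whose high-probability event cannot be verified in practice---the same shortcoming the paper was written to overcome.
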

	
	To provide such theoretical guarantees, we propose to use a novel type of neural network termed estimatable variation neural network (EVNN). A regularity control in form of an upper bound on the total variation (or $W^{1,1}$ norm) is available for EVNNs. We present a straightforward to implement architecture to construct EVNNs for approximating functions depending (non-smoothly) on a small number of variables. We show a universal approximation result for networks with this architecture. Moreover, we are able to integrate these networks numerically up to arbitrary precision and control possible oscillations. These properties can be used to construct explicitly computable loss functions $(\Loss_n)_{n\in\naturals}$ for both ordinary differential equations (ODEs) as well as scalar hyperbolic conservation laws whose minimization leads to the convergence to the unique solution in the sense of Theorem~\ref{thm:informal}, see Theorem  \ref{thm:convergence} below.
	
	Additionally, we show that if the solution is a function of bounded M-variation, a notion we define below, then for each $n$ there always exists a network $\eta$ such that $\Loss_n(\eta)$ is arbitrarily close to $0$. In other words, if the solution is an element of a certain function space, the network has a sufficient number of parameters and the optimizing algorithm finds sufficiently good local minima, loss minimization leads to the unique solution. 

	As is typical for solution learning approaches, the proposed algorithms are not competitive with conventional numerical methods in regards to computational cost for solving forward problems. But they are convenient to modify for additional data fitting constraints or for parameter search. Moreover, the strong theoretical results show that they are a promising starting point for further research on the use of neural networks for (partial) differential equations.

	The structure of the present paper is as follows. In the rest of Section~\ref{sec:introduction} we give a brief overview of existing approaches which use neural networks as trial functions to solve hyperbolic conservation laws. In Section~\ref{sec:BV_funcs} we introduce the space $BMV$ of functions with bounded M-variation which motivates our architecture and provides a theoretical framework to prove error estimates. A simple architecture to construct EVNNs will be presented in Section~\ref{sec:EVNNs} along with theoretical results such as a universal approximation property in $BMV$, see Theorem~\ref{thm:UniBMVapprox}. In Section~\ref{sec:applications} we present our main results: the convergence analysis of EVNNs for scalar hyperbolic conservation laws. Our theoretical results are illustrated by numerical simulations in Section~\ref{sec:num_experiments}.
	
	\subsection{Solution learning for scalar hyperbolic conservation laws}
	
	In the context of hyperbolic partial differential equations several different approaches to approximate solutions by neural networks have been proposed. 
	
	The physics-informed neural network (PINN, see \cite{RaissiPINNs}) approach was used to approximate solutions of a hyperbolic two phase transport model in conservation form by Fuks and Tchelepi in \cite{fuks2020limitations}. They reported low accuracy if solutions exhibited shocks. Adding artificial viscosity is reported to improve the performance in this case if a large enough viscosity coefficient is chosen.

	In \cite{cPINNs} Jagtap et al.~proposed the so-called \textit{conservative physics-informed neural network} (\textit{cPINN})  for solving forward and inverse problems concerning nonlinear conservation laws. The main idea was to decompose the computational domain into subdomains and to employ individual neural networks on each subdomain. Each network was trained with the classical PINN approach with additional \textit{interface conditions} on the interfaces of subdomains.
	
	Patel et al.~proposed to use a loss function reminiscent of the finite volume method in \cite{cvPINNs}. They numerically investigated the effects of the addition of artificial viscosity and penalization of total variation and entropy inequality, respectively. Both forward and inverse problems were studied.

	De Ryck et al.~introduced \textit{weak physics informed neural networks (wPINNs)} for scalar hyperbolic conservation laws in one dimension in \cite{de2022wpinns}. These are neural networks which are trained using the min-max optimization problem for the Kruzhov entropy residual
	\begin{align}\label{eq:MishraEntroyResidual}
		\mathcal{R}(\eta_1,\eta_2,c) \coloneqq - \int_{[0,T]}\int_{[0,1]}  \bigg( &\left| \eta_1(t,x) - c \right| (\partial_t \eta_2)(t,x)
		\notag
		\\ & + \sgn (\eta_1(t,x)-c) (F(\eta_1(t,x))-F(c)) \bigg)(\partial_x \eta_2)(t,x) \ddd x \ddd t.
	\end{align}
	Here, the Kruzkov entropy is characterized by the constant $c\in \reals$ and the test function $\eta_2$ is taken from the set of neural networks with a fixed architecture. $\mathcal{R}$ is maximized with respect to $c$ and $\eta_2$ and then minimized with respect to a neural network $\eta_1$ approximating the weak admissible solution of the equation. In order to approximate the integral in (\ref{eq:MishraEntroyResidual}), the Monte Carlo quadrature method is applied. Moreover, the network $\eta_2$ is approximately normalized with respect to its $H^1$ seminorm which again is calculated using the Monte Carlo quadrature method. De Ryck et al.~derived an $L^1$-error estimate between $\eta_1$ and the unique weak admissible solution. The estimate is valid with high probability under the assumption that a sufficiently good solution $\eta_1$ of this min-max problem is found. More precisely, $\eta_1$ has to satisfy $\sup_{\eta_2,c} \mathcal{R}_h(\eta_1,\eta_2,c) < \epsilon$ for the estimate to hold, where $\epsilon$ denotes a tolerance depending on the upper bound on the error and $\mathcal{R}_h$ denotes the approximation of $\mathcal{R}$ in which integrals are replaced by the corresponding Monte Carlo approximations. While the approach is quite elegant, to the best of our knowledge there is no way of practically ensuring that the computed approximate solution of the min-max problem is sufficiently good in the sense described above.
	
	Besides giving theoretical insights into why the standard PINN approach might not work for nonlinear hyperbolic conservation laws, a slight modification of wPINNs was investigated by Chaumet and Giesslemann in \cite{chaumet2023}. They replaced the min-max optimization of the entropy residual (\ref{eq:MishraEntroyResidual}) with a min-max problem corresponding to the weak formulation of the conservation law and a min-max problem corresponding to the entropy inequality for a single fixed entropy-entropy flux pair. Moreover, they avoid the normalization of the test function by solving a suitable dual problem instead of maximizing the residuals with respect to the test function. The authors reported improved accuracy of their modified wPINNs for the numerical test cases investigated in \cite{de2022wpinns}. Finally they extended their proposed method to weak Dirichlet boundary conditions and system of conservations laws in one space dimension.
	
	Chen et al.\ introduced the TGPT-PINN in \cite{chen2024tgpt}. They use pretrained neural networks as activation functions to speed up the computation of new solutions by drastically reducing the number of trainable parameters.
	
	In \cite{RichterPowell2022} Richter-Powell et al. proposed two different network architectures to approximate smooth, divergence free functions. They showed that any smooth divergence free function can be approximated by both architectures. Further, they propose to use divergence free neural networks to approximate the solution-flux vector $(\eta_1, \veta_2) \approx (u,\ve{F}(u))$ consisting of both the solution $u$ and the flux $\ve{F}(u)$. They then train the network to satisfy initial and boundary conditions and $\ve{F}(\eta_1) \approx \veta_2$. 
	
	\section{Functions of bounded (M-)variation}\label{sec:BV_funcs}

	To motivate the network architecture that we will present in Section~\ref{sec:EVNNs}, we first introduce the function space of functions of bounded \textit{M-variation} on $U$, $BMV(U)$, for $U\subset \reals^d$. It is the linear hull of all monotone functions on $U$ with the notion of monotonicity defined below. 
	This function space is inspired by the classical space of functions of bounded pointwise variation in one variable, see e.g.\ \cite{ambrosio2000functions,leoni2017first}. For these functions a straightforward bound on the variation in the $BV$ sense is available. We will discuss the connection of the M-variation and the variation in the $BV$ sense for $d>1$ in Section~\ref{sec:VarEst}.
	Other notions of variation for multivariate functions which generalize the functions of bounded pointwise variation can be found in \cite{appell2013bounded, clarkson1933definitions}.
	
	For $\vx,\vy\in\reals^d $ we define the partial orders $<$ and $\leq$ by $\vx<\vy$ if $x_i<y_i$ for all $i=1,\dots, d$ and $\vx\leq \vy$ off $x_i\leq y_i$ for all $i=1,\dots, d$.
	Monotonicity of functions from $U \subset \reals^d$ to $\reals$ is to be understood with respect to $\leq$, i.e., we say a function $f\colon U \to \reals$ is nondecreasing if $f(\vx)\leq f(\vy)$ for each $\vx,\vy\in U$ with $\vx\leq \vy$. We say, $f$ is nondecreasing with respect to $\vs \in \{-1,1\}^d$ if $f(\vx)\leq f(\vy)$ for each $\vx,\vy\in U$ with $\vs \odot \vx\leq \vs \odot \vy$ where $\odot$ denotes elementwise multiplication.

	We define the set of \textit{directions of monotonicity} to be 
	\begin{equation*}
	DM \coloneqq \{-1,1\}^{d} \eqqcolon \{\vs_i \setsep i\in J\}, \qquad J\coloneqq \{1,\dots,2^d\}.
	\end{equation*}
	\begin{definition}[$BMV$]\label{bmv}
		We say that a function $f\colon U\subset\reals^d \to \reals$ is of bounded M-variation if there exist nondecreasing functions $f_i \colon \reals^d \to \reals$, $i\in J$, with
		\begin{equation}\label{eq:mono_decomp}
		f(\vx) = \sum_{i\in J} f_i(\mathbf{s}_i \odot \vx) \quad \text{for all }\vx\in U,
		\end{equation}
		where $\odot$ denotes the componentwise multiplication in $\reals^d$. We denote the set of all functions of bounded M-variation on $U$ by $BMV(U)$. Moreover, we set
		\begin{equation*}
		\MVar(f; U) \coloneqq \inf_{f_i} \sum_{i\in J} \left( \sup_{\vx \in U} f_i(\vs_i \odot \vx) - \inf_{\vx \in U} f_i(\vs_i \odot \vx) \right),
		\end{equation*}
		where the first infimum is taken over all decompositions of $f$ of the form (\ref{eq:mono_decomp}).
	\end{definition}

	It is no restriction to consider nondecreasing functions $f_i$ defined on all of $\reals^d$ in (\ref{eq:mono_decomp}). Indeed, for any nondecreasing function $f\colon U \to \reals$ with $\inf_U f>-\infty$ and $\sup_U f < \infty$, we define the nondecreasing extension
	\begin{align}\label{eq:definitionE+-}
		E[f;U](\vx) &=
		\sup \left( \{f(\vy) \setsep \vy\in U,\, \vy \leq \vx\} \cup \{\inf_{\vy\in U} f(\vy)\}\right)
		.
	\end{align}
	We sometimes write $E[f] = E[f;U]$ if there is no ambiguity regarding $U$.
	Note that $E[f](\vx) = f(\vx)$ for all $\vx\in U$ and 
	\begin{equation*}
		-\infty < \inf_{\vy\in U} f(\vy) \leq E[f](\vx) \leq \sup_{y\in U} f(y)<\infty
	\end{equation*}
	for all $\vx\in\reals^d$. 

	Clearly, $BMV(U)$ is a vector space. Moreover,
	every function of the following form is of bounded M-variation: 
	\begin{equation*}
		f(\vx) = 
		\begin{cases}
		r_1 &\text{for }(\vx-\mathbf{a})\cdot \vn>0\\
		r_2 &\text{otherwise,}
		\end{cases}
		\qquad \text{with }r_1,r_2\in\reals,\; \vn, \va\in \reals^d
		.
	\end{equation*}
	Indeed, choosing $i_0\in J$ such that $(r_1-r_2)\,\mathbf{s}_{i_0} \odot \vn \geq 0$, we find that
	$f(\mathbf{s}_{i_0} \odot \vx_2) - f(\mathbf{s}_{i_0} \odot \vx_1) \geq 0$
	for $\vx_2\geq \vx_1$.
	Thus, we can choose $f_{i_0}=f$ and $f_{i}=0$ for all $i\neq i_0$ in (\ref{eq:mono_decomp}).
	Additionally, we have the following lemma concerning the extension by zero of a function in $BMV$.
	\begin{lemma}\label{lem:extensionByZero}
	Given $H = \prod_{i=1}^d [a_i, b_i]\subset \reals^d$, let $f\in BMV(H)$. For $j\in \{1,\dots,d\}$ and $c\in [a_j,b_j]$ we define
	\begin{equation*}
		\tilde f \colon \tilde H \to \reals,\quad \tilde f(\vx) = \begin{cases}
			f(\vx) &\text{if }x_j\in I_1\\
			0 &\text{if }x_j \in I_2,
		\end{cases}
	\end{equation*}
	with 
	$
		(I_1, I_2) \in \Big\{ 
		\big((c,b_j], (-\infty, c]\big),\;\;
		\big([c,b_j], (-\infty, c)\big),\;\; 
		\big([a_j,c), [c, \infty)\big),\;\;  
		\big([a_j,c], (c, \infty)\big) \Big\}
	$
	and 
	\begin{equation*}
		\tilde H \coloneqq [a_1,b_1]\times \dots \times
		[a_{j-1},b_{j-1}]\times 
		\Big(I_1 \cup I_2\Big)
		\times [a_{j+1},b_{j+1}]
		\times \dots\times [a_d,b_d].
	\end{equation*}
	Then $\tilde f \in BMV(\tilde H)$ with 
	$
		\MVar(\tilde f; \tilde H)\leq \MVar(f) +  \max \{|f(\vx)| \setsep \vx \in H\}.
	$
	\end{lemma}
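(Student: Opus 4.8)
The plan is to manufacture a monotone decomposition of $\tilde f$ out of a near‑optimal one of $f$. First I would reduce to a single case: the reflection $x_j\mapsto -x_j$ turns the third and fourth choices of $(I_1,I_2)$ into the first and second, and whether the cut point $c$ belongs to $I_1$ or to $I_2$ only affects one‑sided limits, so it suffices to treat $(I_1,I_2)=\bigl((c,b_j],(-\infty,c]\bigr)$, i.e.\ $\tilde f=f$ on $\{x_j>c\}$ and $\tilde f=0$ on $\{x_j\le c\}$. Since $\tilde f$ depends only on the values of $f$ on the sub‑box $H_1:=[c,b_j]\times\prod_{k\ne j}[a_k,b_k]\subseteq H$ and $\MVar(f;H_1)\le\MVar(f;H)$ (a decomposition of $f$ on $H$ restricts to one on $H_1$ with no larger oscillations), I fix $\varepsilon>0$ and a decomposition $f|_{H_1}=\sum_{i\in J}\phi_i$ with $\phi_i(\vx)=f_i(\vs_i\odot\vx)$, $f_i$ nondecreasing, and $\sum_{i\in J}\operatorname{osc}_{H_1}\phi_i\le\MVar(f)+\varepsilon$; in view of~(\ref{eq:definitionE+-}) I may take each $f_i$ bounded with $\inf_{\reals^d}f_i=\inf_{H_1}\phi_i$ and $\sup_{\reals^d}f_i=\sup_{H_1}\phi_i$.

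Next, split $J=J_+\sqcup J_-$ by the sign $(\vs_i)_j=\pm1$, and replace $\phi_i$ by $\hat\phi_i:=\phi_i+c_i$ where the constants satisfy $\sum_{i\in J}c_i=0$ and, in addition, $\hat\phi_i\ge0$ on $\reals^d$ for every $i\in J_+$ and $\hat\phi_i\le0$ on $\reals^d$ for every $i\in J_-$; this is possible (take $\sum_{i\in J_+}c_i$ large enough), and adding constants changes neither $\sum_i\hat\phi_i=f$ nor any oscillation. Now set, for $\vx\in\reals^d$,
\begin{equation*}
	\tilde\phi_i(\vx):=\begin{cases}\hat\phi_i(\vx),&x_j>c,\\[1mm]0,&x_j\le c,\end{cases}
	\qquad\tilde f_i(\vz):=\tilde\phi_i(\vs_i\odot\vz).
\end{equation*}
Each $\tilde\phi_i$ is nondecreasing with respect to $\vs_i$: for two points on the same side of the hyperplane $\{x_j=c\}$ this is immediate, while a comparable pair straddling it can occur only in the direction in which $\tilde\phi_i$ jumps from $0$ to a value of $\hat\phi_i$, and the sign conditions are exactly what makes that jump have the right sign (e.g.\ for $i\in J_+$ one needs $0=\tilde\phi_i(\vx)\le\hat\phi_i(\vy)=\tilde\phi_i(\vy)$ whenever $x_j\le c<y_j$). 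Hence each $\tilde f_i$ is nondecreasing on $\reals^d$, and $\sum_i\tilde f_i(\vs_i\odot\vx)=\tilde f(\vx)$ on $\tilde H$ — it equals $\sum_i\hat\phi_i=f$ on $\{x_j>c\}$ (using $\sum_ic_i=0$) and $0$ on $\{x_j\le c\}$ — so $\tilde f\in BMV(\tilde H)$.

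For the quantitative estimate, $\MVar(\tilde f;\tilde H)\le\sum_i\operatorname{osc}_{\tilde H}\tilde\phi_i$, and since $\hat\phi_i$ has a constant sign on $H_1$ one computes $\operatorname{osc}_{\tilde H}\tilde\phi_i=\operatorname{osc}_{H_1}\phi_i+\inf_{H_1}\hat\phi_i$ for $i\in J_+$ and $\operatorname{osc}_{\tilde H}\tilde\phi_i=\operatorname{osc}_{H_1}\phi_i-\sup_{H_1}\hat\phi_i$ for $i\in J_-$. Summing,
\begin{equation*}
	\MVar(\tilde f;\tilde H)\le\MVar(f)+\varepsilon+\mathcal D,\qquad
	\mathcal D:=\sum_{i\in J_+}\inf_{H_1}\hat\phi_i-\sum_{i\in J_-}\sup_{H_1}\hat\phi_i\ \ (\ge0).
\end{equation*}
Because $\hat\phi_i$ is nondecreasing (resp.\ nonincreasing) in $x_j$ for $i\in J_+$ (resp.\ $i\in J_-$), all extrema in $\mathcal D$ are attained on the cut face $F_0:=\{x_j=c\}\times\prod_{k\ne j}[a_k,b_k]$; writing $\psi_i:=\phi_i|_{F_0}$ (nondecreasing with respect to $\vs_i$ restricted to the coordinates $\ne j$, with $\sum_i\psi_i=f|_{F_0}$) and minimizing over the constants still free, one obtains $\mathcal D=\bigl|\sum_{i\in J_+}\inf_{F_0}\psi_i+\sum_{i\in J_-}\sup_{F_0}\psi_i\bigr|$.

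The remaining step — which I expect to be the crux — is to show that a suitable choice of the decomposition of $f|_{H_1}$ makes $\mathcal D\le\max_H|f|$, after which letting $\varepsilon\to0$ completes the proof. Pairing the indices of $J_+$ and $J_-$ sharing the same sign pattern on the coordinates $\ne j$ into $w_{\vs'}:=\psi_{i_+(\vs')}+\psi_{i_-(\vs')}$ produces a monotone decomposition $f|_{F_0}=\sum_{\vs'}w_{\vs'}$, and since $\inf_{F_0}\psi_{i_+(\vs')}+\sup_{F_0}\psi_{i_-(\vs')}$ lies between $\inf_{F_0}w_{\vs'}$ and $\sup_{F_0}w_{\vs'}$, the size of $\mathcal D$ is governed by the boundary behaviour of these $w_{\vs'}$ on $F_0$. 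The natural route is to build the decomposition of $f|_{H_1}$ so that its restriction to $F_0$ is an essentially optimal decomposition of $f|_{F_0}$ with the constants normalized so that $\mathcal D\le\max_{F_0}|f|\le\max_H|f|$; for $d=1$ the face $F_0$ is a single point and $\mathcal D=|f(c)|\le\max_H|f|$ automatically, which suggests carrying out this last estimate by induction on $d$, reducing the $d$‑dimensional bound to the $(d-1)$‑dimensional one together with the classical one‑dimensional Jordan decomposition.
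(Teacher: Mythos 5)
Your proposal takes a genuinely different route from the paper. The paper's proof is a two-line reduction to a multiplicative estimate: it observes that for $f,g$ each nondecreasing with respect to $\vs$ or $-\vs$, the product satisfies $\MVar(fg;H)\le\MVar(f)\|g\|_\infty+\MVar(g)\|f\|_\infty$, proved via the chain functional $\pVar_\vs$ exactly as in the classical one-dimensional product rule for $BV$; it then writes $\tilde f=f\cdot\ind{\{x_j\in I_1\}}$ and uses that $\ind{\{x_j\ge c\}}$ is monotone with respect to every $\vs\in\{-1,1\}^d$. You instead try to exhibit an explicit monotone decomposition of $\tilde f$ by truncating a near-optimal decomposition of $f$ along the hyperplane $\{x_j=c\}$, after shifting constants so that the truncation preserves monotonicity. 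Your reduction to the case $(I_1,I_2)=((c,b_j],(-\infty,c])$, the split $J=J_+\sqcup J_-$ by the sign of $(\vs_i)_j$, the feasibility of the constant shifts, the verification that each $\tilde\phi_i$ is nondecreasing with respect to $\vs_i$, and the oscillation bookkeeping leading to the extra term $\mathcal D$ are all correct.

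However, there is a genuine gap precisely where you flag "the crux": you never establish that a decomposition of $f$ can be chosen so that $\mathcal D\le\max_H|f|$, and the induction you gesture at is not carried out. The obstruction is substantive, not routine: after optimizing the constants you correctly reduce $\mathcal D$ to $|S|$ with $S=\sum_{J_+}\inf_{F_0}\psi_i+\sum_{J_-}\sup_{F_0}\psi_i$, but for $d=1$ the face $F_0$ is a point and the identity $\sum_i\psi_i=f|_{F_0}$ gives $S=f(c)$ for free, whereas for $d>1$ the infima of the $\psi_i$ with $i\in J_+$ and the suprema of the $\psi_i$ with $i\in J_-$ are attained at \emph{opposite} corners of the $(d-1)$-dimensional face $F_0$, so there is no such pointwise identity. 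Your pairing into $w_{\vs'}=\psi_{i_+(\vs')}+\psi_{i_-(\vs')}$ only yields $\inf_{F_0}w_{\vs'}\le\inf\psi_{i_+}+\sup\psi_{i_-}\le\sup_{F_0}w_{\vs'}$ per pair, and summing gives a bound by $\sum_{\vs'}\|w_{\vs'}\|_\infty$, which is not controlled by $\max_H|f|$. To make $|S|\le\max_H|f|$ you would need to \emph{co-optimize} the near-optimality of the $H_1$-decomposition (which pins down the oscillations) against the boundary values on $F_0$ (which determine $S$), and it is not clear that both can be made simultaneously small; the proposed induction on $d$ does not reduce this tension to a previously established case of the lemma. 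The paper sidesteps this difficulty entirely by working with $\pVar_\vs$ and the product rule rather than with an explicit decomposition, which is why its argument is shorter and avoids the boundary-face optimization your approach runs into.
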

	\begin{proof}
		We define $\ve{a}=(a_1,\dots,a_d)$, $\ve{b}=(b_1,\dots,b_d)$.
		Note that for any $\vs\in \{-1,1\}^d$ and functions $f,g\colon H \to \reals$ which are either nondecreasing with respect to $\vs$ or $-\vs$, the product $[fg:\vx \mapsto f(\vx)g(\vx)]$ is a function of bounded M-variation with 
		\begin{equation*}
			\MVar(fg;H)\leq \MVar(f)\|g\|_\infty + \MVar(g)\|f\|_\infty.
		\end{equation*}
		The above can be shown analogously as in the case $d=1$ (see e.g.\ \cite[Proposition 1.10]{appell2013bounded}) by considering the functional
		\begin{equation*}
			\begin{aligned}
			\pVar_\vs[f] \coloneqq& \sup\Bigg\{ \sum_{i=1}^N |f(\vx_i) - f(\vx_{i-1})| \setsep N\in\naturals,\, \vx_0, \vx_1,\dots, \vx_N \in H \text{ with}
			\\
			&\vs\odot\left(\frac{\ve{a}+\ve{b}}{2} + \vs\odot \frac{\ve{a}-\ve{b}}{2}\right)
			\leq \vs\odot \vx_0 
			\leq \vs\odot\vx_1
			\leq \dots 
			\leq \vs\odot\vx_N 
			\leq
			\vs\odot\left(\frac{\ve{a}+\ve{b}}{2} - \vs\odot \frac{\ve{a}-\ve{b}}{2}\right)
			\Bigg\}.
			\end{aligned}
		\end{equation*}
		As $\ind{\{x_i \geq c\}}$ is either nondecreasing with respect to $\vs$ or $-\vs$ for any $\vs \in\{-1,1\}^d$, the proof is complete.
	\end{proof}

	In particular, since $BMV(U)$ is a linear space, it contains the space of all piecewise constant functions on rectilinear grids. The latter is typically used in the context of the finite volume method. 
	
	As another consequence of Lemma~\ref{lem:extensionByZero} it is also possible to extend $BMV$ functions periodically.
	\begin{corollary}\label{cor:periodic_extension}
		Given $H = \prod_{i=1}^d [a_i, b_i]\subset \reals^d$, let $f\in BMV(H)$. For $j\in \{1,\dots,d\}$ and $c\in [a_j,b_j]$ we define
		\begin{equation*}
			\tilde f \colon \tilde H \to \reals,\quad \tilde f(\vx) = \begin{cases}
				f(\vx) &\text{if }x_j\in [a_j,b_j]\\
				f(\vx - (b_j-a_j)\ve{e}_j) &\text{if }x_j \in (b_j,2b_j-a_j],
			\end{cases}
		\end{equation*}
		with 
		$
			\tilde H \coloneqq [a_1,b_1]\times \dots \times
			[a_{j-1},b_{j-1}]\times 
			[a_j, 2b_j-a_j]
			\times [a_{j+1},b_{j+1}]
			\times \dots\times [a_d,b_d].
		$
		Then $\tilde f \in BMV(\tilde H)$ with 
		$
			\MVar(\tilde f; \tilde H)\leq 2\Big(\MVar(f) +  \max \{f(\vx) \setsep \vx \in H\}\Big).
		$
	\end{corollary}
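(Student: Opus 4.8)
\emph{Proof strategy.} The plan is to reduce the statement to Lemma~\ref{lem:extensionByZero} by splitting the periodic extension $\tilde f$ into two summands, each of which is (a restriction of) the zero-extension of a translated copy of $f$. Concretely, I would write $\tilde f = f_1 + f_2$ on $\tilde H$ with
\begin{equation*}
	f_1(\vx) = \begin{cases} f(\vx) & \text{if } x_j\in[a_j,b_j]\\ 0 & \text{if } x_j\in(b_j,2b_j-a_j]\end{cases}
	\qquad\text{and}\qquad
	f_2(\vx) = \begin{cases} 0 & \text{if } x_j\in[a_j,b_j]\\ f(\vx-(b_j-a_j)\ve{e}_j) & \text{if } x_j\in(b_j,2b_j-a_j].\end{cases}
\end{equation*}
That $f_1+f_2=\tilde f$ is immediate; the only point needing (trivial) attention is the seam $x_j=b_j$, where $f_1(\vx)=f(\vx)$ and $f_2(\vx)=0$.

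Next I would recognize $f_1$ as the restriction to $\tilde H$ of the function furnished by Lemma~\ref{lem:extensionByZero} applied to $f$ with the same index $j$, with $c=b_j$, and with the interval pair $(I_1,I_2)=([a_j,c],(c,\infty))$; one checks this is exactly the pair among the four options that makes the zero-extension agree with $f$ at $x_j=b_j$. Since passing to a sub-box cannot increase the M-variation — any decomposition of the form (\ref{eq:mono_decomp}) on the larger set restricts to one on the smaller set, and the oscillations of the $f_i$ only shrink — this yields $f_1\in BMV(\tilde H)$ with $\MVar(f_1;\tilde H)\le \MVar(f)+\max_{\vx\in H}|f(\vx)|$. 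Similarly, $f_2$ is the restriction to $\tilde H$ of the zero-extension of the translated function $\vx\mapsto f(\vx-(b_j-a_j)\ve{e}_j)$, which lives on the box obtained from $H$ by replacing its $j$-th side $[a_j,b_j]$ by $[b_j,2b_j-a_j]$; here I apply Lemma~\ref{lem:extensionByZero} with $c=b_j$ (the left endpoint of that side) and the pair $((c,2b_j-a_j],(-\infty,c])$. Because $\MVar$ and $\sup|{\cdot}|$ are invariant under translating the domain (trivially, by translating every $f_i$ in a decomposition), this gives $f_2\in BMV(\tilde H)$ with $\MVar(f_2;\tilde H)\le \MVar(f)+\max_{\vx\in H}|f(\vx)|$.

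Finally, summing near-optimal monotone decompositions of $f_1$ and $f_2$ shows $\MVar$ is subadditive, so $\MVar(\tilde f;\tilde H)=\MVar(f_1+f_2;\tilde H)\le \MVar(f_1;\tilde H)+\MVar(f_2;\tilde H)\le 2\big(\MVar(f)+\max_{\vx\in H}|f(\vx)|\big)$, which is the asserted bound. I do not expect a genuine obstacle here: the argument is essentially bookkeeping, and the only things to watch are (i) choosing, among the four interval pairs offered by Lemma~\ref{lem:extensionByZero}, the ones that make $f_1$ and $f_2$ coincide with $\tilde f$ at the seam $x_j=b_j$, and (ii) recording the elementary facts that $\MVar$ is subadditive, invariant under translations of the domain, and monotone under restriction of the domain.
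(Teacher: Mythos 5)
Your proof is correct and is almost certainly the argument the paper intends: Corollary~\ref{cor:periodic_extension} is introduced in the text as ``another consequence of Lemma~\ref{lem:extensionByZero}'' and is given no separate proof, so the decomposition $\tilde f=f_1+f_2$ into a zero-extension of $f$ to the right and a zero-extension of the translated copy of $f$ to the left, each handled by Lemma~\ref{lem:extensionByZero}, is exactly the route the authors have in mind. Your identification of the relevant interval pairs (with $c=b_j$: $(I_1,I_2)=\big([a_j,c],(c,\infty)\big)$ for $f_1$, and $\big((c,2b_j-a_j],(-\infty,c]\big)$ for the translated box for $f_2$) is the right bookkeeping, and the three elementary facts you invoke about $\MVar$ (subadditivity, translation invariance, monotonicity under restriction of the domain) all hold for the reasons you indicate.

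One small but genuine point: your argument produces the bound $\MVar(\tilde f;\tilde H)\leq 2\big(\MVar(f)+\max\{|f(\vx)|\setsep\vx\in H\}\big)$, whereas the corollary's displayed bound has $\max f$ in place of $\max|f|$. The version with $\max|f|$ is the correct one and consistent with Lemma~\ref{lem:extensionByZero} (which also uses $\max|f|$); the printed bound with $\max f$ would be vacuously false for, say, a negative constant $f$ (it would produce a negative right-hand side while $\MVar(\tilde f)\geq 0$). So this is evidently a typo in the statement of the corollary, and your derivation gives the bound that was intended.
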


	\subsection{Integrating functions of bounded M-variation}\label{sec:IntegratingBMVFunctions}
	
	Monotone functions are measurable \cite{chabrillac1987continuity}, and thus $BMV$ functions are as well. Moreover, $BMV$ functions are bounded on bounded sets. Therefore, $BMV(U)$ is a subspace of $L^p(U)$ for all $p\in [1,\infty]$ for each bounded, measurable set $U$.
	
	In \cite{papageorgiou1993integration} Papageorgiou investigated optimal approximation of integrals of monotone functions in several variables. In this paper we will  use the quadrature scheme from \cite[Theorem 2]{papageorgiou1993integration} adopted to approximate integrals of the form 
	\begin{equation*}
	\int_U F \left(f^{(1)}(\vx),\dots, f^{(m)}(\vx)\right) \ddd \vx. 
	\end{equation*}
	Here, $F$ denotes a Lipschitz continuous function, $m\in \naturals$ is arbitrary and $f^{(k)}\in BMV(U)$ for each $k\in \{1,\dots,m\}$.
	For simplicity, we restrict ourselves to integration on hyperrectangles $U=\prod_i [a_i,b_i]$.
	Approximation of integrals on arbitrary measurable sets is discussed in Remark~\ref{rem:integration_on_arbitrary_measurable_sets}.
	
	Let $F\colon \reals^m \to \reals$ have the Lipschitz constant $L$ on $(f^{(1)}(\cdot),  \dots, f^{(m)}(\cdot))(U)$ with respect to the $\ell^1$-norm on $\reals^m$. Denote $V=\prod_i (b_i-a_i)$ and $h_i = (b_i-a_i)/k$. For $C_\mathbf{j} = \prod_{i=1}^d [a_i+j_i h_i, a_i+(j_i +1)h_i]$
	with $\ve{j} = (j_1,\dots, j_d) \in \{0,1,\dots, k-1\}^d$, we fix $\vx_\ve{j}\in C_\ve{j}$.
	Using similar arguments as in the proof of \cite[Theorem 2]{papageorgiou1993integration}, it is straightforward to derive the following theorem.
	\begin{theorem}\label{papageorgiou_integration_theorem}
	Let $k\in\naturals$ satisfy $k\geq {d \,L\, V \epsilon^{-1}\sum_{l=1,...,m} \MVar(f^{(l)})}$ for $\epsilon>0$. Then
	\begin{equation}\label{ineq:Integration_estimate}
		\left| \int_{H} F\big(f^{(1)}(\vy),...,f^{(m)}(\vy)\big) \ddd \vy - \frac{V}{k^d} \sum_{\mathbf{j}\in \{0,\dots, k-1\}^d} F\big(f^{(1)}(\vx_{\ve{j}}),...,f^{(m)}(\vx_{\ve{j}}) \big) \right|< \epsilon.
	\end{equation}
	\end{theorem}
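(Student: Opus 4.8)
The plan is to reduce the statement to the case of monotone functions, for which the quadrature estimate is the one behind \cite[Theorem 2]{papageorgiou1993integration}, and to absorb the Lipschitz map $F$ by a cell-by-cell triangle inequality. Let $C_{\ve{j}} = \prod_{i=1}^d[a_i+j_ih_i, a_i+(j_i+1)h_i]$, $\ve{j}\in\{0,\dots,k-1\}^d$, be the closed grid cells; they cover $H$, overlap only on a Lebesgue null set, and each has volume $V/k^d$. Writing $\operatorname{osc}(g;A)\coloneqq\sup_A g-\inf_A g$, I would first estimate, using that $F$ is $L$-Lipschitz with respect to the $\ell^1$-norm on the range of $(f^{(1)},\dots,f^{(m)})$ and that $f^{(l)}(\vx_{\ve{j}})\in f^{(l)}(C_{\ve{j}})$,
\begin{multline*}
	\bigg|\int_H F\big(f^{(1)}(\vy),\dots,f^{(m)}(\vy)\big)\ddd\vy - \frac{V}{k^d}\sum_{\ve{j}} F\big(f^{(1)}(\vx_{\ve{j}}),\dots,f^{(m)}(\vx_{\ve{j}})\big)\bigg| \\
	\le \sum_{\ve{j}}\int_{C_{\ve{j}}} L\sum_{l=1}^m \big|f^{(l)}(\vy)-f^{(l)}(\vx_{\ve{j}})\big|\ddd\vy
	\le \frac{L\,V}{k^d}\sum_{l=1}^m\sum_{\ve{j}}\operatorname{osc}\big(f^{(l)};C_{\ve{j}}\big).
\end{multline*}
Hence it suffices to prove the purely geometric bound $\sum_{\ve{j}}\operatorname{osc}(f^{(l)};C_{\ve{j}})\le d\,k^{d-1}\MVar(f^{(l)})$ for each $l$.

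For this geometric bound I would first treat a single nondecreasing $g\colon\reals^d\to\reals$. On a box $C_{\ve{j}}$ its supremum and infimum are attained at the upper and lower vertex, so $\operatorname{osc}(g;C_{\ve{j}}) = g(\ve{p}+\ve{h})-g(\ve{p})$ with $\ve{p}$ the lower vertex and $\ve{h}=(h_1,\dots,h_d)$, and this increment telescopes along the axis-parallel path joining $\ve{p}$ to $\ve{p}+\ve{h}$: setting $\ve{q}^{(0)}=\ve{p}$ and $\ve{q}^{(i)}=\ve{q}^{(i-1)}+h_i\ve{e}_i$, one has $\operatorname{osc}(g;C_{\ve{j}})=\sum_{i=1}^d\big(g(\ve{q}^{(i)})-g(\ve{q}^{(i-1)})\big)$, where $\ve{q}^{(i)}$ and $\ve{q}^{(i-1)}$ differ only in coordinate $i$. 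Summing over $\ve{j}$ and exchanging the order of summation, for each fixed direction $i$ the inner sum over the index $j_i$ (the other $d-1$ indices frozen) is a one-dimensional telescoping sum in the $i$-th variable and is therefore bounded by $\operatorname{osc}(g;H)$; as there are $k^{d-1}$ choices of the frozen indices and $d$ directions, this gives $\sum_{\ve{j}}\operatorname{osc}(g;C_{\ve{j}})\le d\,k^{d-1}\operatorname{osc}(g;H)$. I expect this combinatorial telescoping estimate to be the main point: the cell oscillations overcount, but in each coordinate direction every ``column'' of cells contributes only a telescoping sum, which is exactly the mechanism of \cite[Theorem 2]{papageorgiou1993integration}.

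Finally, for a general $f^{(l)}\in BMV(H)$ and any decomposition $f^{(l)}=\sum_{i\in J}f^{(l)}_i(\vs_i\odot\,\cdot\,)$ as in \eqref{eq:mono_decomp}, each map $\vx\mapsto f^{(l)}_i(\vs_i\odot\vx)$ becomes nondecreasing after the reflection $\vx\mapsto\vs_i\odot\vx$, which sends $H$ and the grid $(C_{\ve{j}})_{\ve{j}}$ to a hyperrectangle carrying an analogous uniform $k\times\cdots\times k$ grid; applying the previous estimate there yields $\sum_{\ve{j}}\operatorname{osc}\big(f^{(l)}_i(\vs_i\odot\,\cdot\,);C_{\ve{j}}\big)\le d\,k^{d-1}\big(\sup_{\vx\in H}f^{(l)}_i(\vs_i\odot\vx)-\inf_{\vx\in H}f^{(l)}_i(\vs_i\odot\vx)\big)$. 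Since $\operatorname{osc}(\,\cdot\,;C_{\ve{j}})$ is subadditive under sums of functions, summing over $i\in J$ and then passing to the infimum over all decompositions of $f^{(l)}$ gives $\sum_{\ve{j}}\operatorname{osc}(f^{(l)};C_{\ve{j}})\le d\,k^{d-1}\MVar(f^{(l)})$. Combined with the first display, the quadrature error is at most $\tfrac{L\,V}{k^d}\cdot d\,k^{d-1}\sum_{l=1}^m\MVar(f^{(l)})=\tfrac{d\,L\,V}{k}\sum_{l=1}^m\MVar(f^{(l)})\le\epsilon$ by the hypothesis on $k$; the strict inequality asserted in \eqref{ineq:Integration_estimate} follows from a routine strengthening (either $\sum_l\MVar(f^{(l)})=0$, in which case the integrand is a.e.\ constant and the error is $0$, or the column-telescoping bound is strict for the non-extremal columns). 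Apart from the telescoping estimate, the remaining steps — the Lipschitz reduction, the reflection, and the passage to the infimum — are bookkeeping.
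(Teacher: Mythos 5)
Your proof follows exactly the route the paper points at: the paper itself gives no proof of Theorem~\ref{papageorgiou_integration_theorem} beyond the remark that it follows ``using similar arguments as in the proof of \cite[Theorem~2]{papageorgiou1993integration}'', and your argument---Lipschitz reduction to per-cell oscillations, a direction-by-direction telescoping sum over columns of cells yielding $\sum_{\ve{j}}\operatorname{osc}(g;C_{\ve{j}})\leq d\,k^{d-1}\operatorname{osc}(g;H)$ for nondecreasing $g$, transfer to general $f^{(l)}\in BMV$ via the decomposition \eqref{eq:mono_decomp}, subadditivity of the oscillation, and passage to the infimum over decompositions---is precisely that Papageorgiou-style argument, lifted from the monotone to the $BMV$ setting. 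The chain of estimates is correct and produces the bound $\tfrac{dLV}{k}\sum_l\MVar(f^{(l)})\leq\epsilon$.

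The one step that does not hold up is your handling of the strict inequality. Your remedy (``the column-telescoping bound is strict for the non-extremal columns'') makes sense for $d\geq 2$, where a non-constant monotone $g$ cannot realize its full oscillation on $H$ simultaneously along columns in two different coordinate directions; but for $d=1$ there is only $k^{d-1}=1$ column per direction and the telescoping relation $\sum_j\operatorname{osc}(g;C_j)=g(b)-g(a)=\operatorname{osc}(g;H)$ is an identity, so no slack is available there. Indeed the strict inequality in \eqref{ineq:Integration_estimate} can fail under the stated hypothesis: take $d=m=1$, $H=[0,1]$, $F=\mathrm{id}$ (so $L=V=1$), $f=\ind{[1/2,1]}$ (nondecreasing, $\MVar(f)=1$), $k$ even, and choose $\vx_{k/2-1}=1/2$ (the right endpoint of $C_{k/2-1}$, which the theorem permits). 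Then the quadrature error equals $1/k$ exactly, while the hypothesis $k\geq dLV\epsilon^{-1}\MVar(f)=\epsilon^{-1}$ is satisfied with $\epsilon=1/k$. So the theorem as stated should read $\leq\epsilon$, or assume $k>dLV\epsilon^{-1}\sum_l\MVar(f^{(l)})$ strictly; this is a wart in the statement rather than in your argument, but your sketch of how to recover strictness would not repair it.
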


	The error rate is asymptotically optimal for monotone functions \cite[Theorem 1]{papageorgiou1993integration} and thus also for $BMV$ functions. It is noteworthy that the concrete choice of the points $\vx_{\ve{j}} \in C_{\mathbf{j}}$ plays no role in the derivation of the error estimate (\ref{ineq:Integration_estimate}). In particular, it is possible to choose these points randomly. This yields a random quadrature formula with an average error which does not suffer from the curse of dimensionality, while still satisfying the accuracy derived above in the worst case scenario, cf.~\cite{papageorgiou1993integration}.
	
	In low dimensions, it seems to be more practical to choose quadrature points $\vx_{\ve{j}}$ in such a way, that they coincide with each other as much as possible. Therefore, the $\vx_{\ve{j}}$ will be chosen from the set of vertices of the hyperrectangles $C_{\mathbf{j}}$ in all numerical experiments presented in Section~\ref{sec:num_experiments}. If $k$ is even, this choice reduces the amount of sampling points by $2^d$.
	
	\begin{remark}
		Note that Theorem~\ref{papageorgiou_integration_theorem} implies that it is possible to approximate the $L^p$ norm of every $BMV$ function up to arbitrary accuracy for all $1\leq p < \infty$. In particular, the $L^p$ norm of differences of $BMV$ functions can be calculated up to a given accuracy.
	\end{remark}

\begin{remark}\label{rem:integration_on_arbitrary_measurable_sets}
	The restriction of considering only hypercubes can be relaxed easily in the following sense:
	Note that for every bounded domain $U \subset \reals^d$ there exists a hyperrectangle $H$ with $H \supset U$. Moreover, it is possible to approximate $\ind{U}$, the characteristic function of the set $U$, in $L^1(H)$ by a function $g\in BMV(H)$, see Appendix~\ref{App:approxProps}. Using  the extension operator $E$ defined in (\ref{eq:definitionE+-}), the functions $f_i$ can be extended to $H$. Consequently, we can apply Theorem~\ref{papageorgiou_integration_theorem} to numerically approximate $\int_U f \ddd \vx \approx \int_H fg \ddd \vx$.
	Note however that the necessary number of quadrature points to achieve a prescribed accuracy now also depends on the M-variation of $g$.
\end{remark}
	
	\subsection{Relation between total variation and M-variation}\label{sec:VarEst}
	In this section, we show how the M-variation can be used to bound the total variation. Moreover, we investigate how to sharpen this bound for monotone functions.
	First, we consider functions defined on the hyperrectangle $U=\prod_{j=1}^{d}[a_j,b_j]$, $a_j < b_j$ for all $j=1,\dots,d$.
	Given a function of bounded M-variation 
	\begin{equation*}
	f(\vx) = \sum_{i\in J} f_i(\mathbf{s}_i \odot \vx)
	\end{equation*}
	with $f_i \in C^1(\reals^d)$ for all $i\in J$ we find that 
	$
	\Big|\partial_{x_k} f(\vx)\Big| \leq \sum_{i\in J} (\vs_i)_k \partial_{x_k} f_i(\vs_i \odot \vx).
	$
	This allows us to invoke the fundamental theorem of calculus to derive the following estimate of the derivatives of $f$ in the $L^1$ norm:
\begin{equation}\label{eq:basicW11Estimate}
	\| \partial_{x_k} f \|_{L^1(U)}
	\leq \sum_{i\in J} (\vs_i)_k 
	\left(
	\int_{U_k^+} 
	\Big(   
	f_i(\vs_i \odot \vx )
	\Big)
	\ddd S \vx
	-
	\int_{U_k^-} 
	\Big(   
	f_i(\vs_i \odot \vx )
	\Big)
	\ddd S \vx
	\right).
\end{equation}
with $U_k^+ = \prod_{j=1}^{k-1}[a_j,b_j] \times \{b_k\} \times \prod_{j=k+1}^{d}[a_j,b_j]$ and $U_k^- = \prod_{j=1}^{k-1}[a_j,b_j] \times \{a_k\} \times \prod_{j=k+1}^{d}[a_j,b_j]$.
	Due to the previous results, we can numerically calculate the right hand side up to an arbitrary accuracy. This allows us to estimate the total variation of $f$ based on a finite, predeterminable number of sampling points.
	
	The following lemma shows that the M-variation already by itself provides a computationally cheap estimate on the total variation of a monotone function.	
	We define for any open set $U\subset \reals^d$
	\begin{equation*}
		\Var_\infty(f; U) \eqqcolon \sup \left\{ \int_U f \Div(\varphi) \ddd \vy \setsep \varphi \in C_c^1(U), \quad \|\varphi\|_\infty = \max_{i=1,...,d} \|\varphi_i\|_\infty \leq 1 \right\}.
	\end{equation*}

	\begin{lemma}\label{lem:UpperTVBound}
		Let $U$ be a bounded Lipschitz domain, $f:U\to\reals$ be such that $\vx \mapsto f(\vs \odot \vx)$ is nondecreasing for a fixed $\vs \in \{-1,1\}^d$ and all $\vx$ with $\vs\odot \vx \in U$. Then
		\begin{equation*}
		\frac{1}{\sqrt{d}}\Var(f; U) \leq	\Var_\infty(f; U) \leq  \frac{\sqrt{d} |\partial U|}{2} \MVar(f;U).
		\end{equation*}
	\end{lemma}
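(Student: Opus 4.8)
The plan is to prove the two inequalities separately, noting first that the lower bound $\frac{1}{\sqrt d}\Var(f;U)\leq \Var_\infty(f;U)$ is essentially a norm-comparison statement: the total variation $\Var(f;U)$ is defined with test fields bounded by $1$ in the Euclidean norm on $\reals^d$, whereas $\Var_\infty$ uses the $\ell^\infty$ bound $\|\varphi\|_\infty\leq 1$. Since $\|\varphi(\vy)\|_2\leq \sqrt d\,\|\varphi(\vy)\|_\infty$, every admissible field for $\Var_\infty$ is, after dividing by $\sqrt d$, admissible for $\Var$; taking the supremum gives $\Var(f;U)\leq \sqrt d\,\Var_\infty(f;U)$, which is the claimed bound. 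This step is routine.

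The substance is the upper bound $\Var_\infty(f;U)\leq \frac{\sqrt d\,|\partial U|}{2}\MVar(f;U)$. By the definition of $\MVar$ for a single monotone direction, $\MVar(f;U)=\sup_U \tilde f - \inf_U \tilde f$ where $\tilde f(\vx)=f(\vs\odot\vx)$ (or rather the extension $E[f]$), so it suffices to bound $\int_U f\,\Div\varphi\ddd\vy$ for a fixed $\varphi\in C_c^1(U)$ with $\|\varphi\|_\infty\leq 1$. First I would reduce to the case $\vs=(1,\dots,1)$ by the change of variables $\vx\mapsto \vs\odot\vx$, which maps $U$ to $\vs\odot U$, is measure-preserving up to sign, transforms $C_c^1$ test fields into $C_c^1$ test fields with the same $\ell^\infty$ bound, and leaves both $\Var_\infty$ and $\MVar$ invariant; so assume $f$ itself is nondecreasing. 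Next, approximate: mollify $f$ to get $f_\varepsilon\in C^\infty$, still nondecreasing on the interior, with $f_\varepsilon\to f$ in $L^1_{loc}$ and $\sup f_\varepsilon-\inf f_\varepsilon\leq \MVar(f;U)+o(1)$; for a smooth nondecreasing function, $\int_U f_\varepsilon \Div\varphi = -\int_U \nabla f_\varepsilon\cdot\varphi$, and since every component $\partial_{x_k}f_\varepsilon\geq 0$ we get $\big|\int_U\nabla f_\varepsilon\cdot\varphi\big|\leq \sum_k\int_U\partial_{x_k}f_\varepsilon \leq \sqrt d\,\|\nabla f_\varepsilon\|_{L^1;\ell^1\text{-summed}}$, so the whole estimate comes down to bounding $\sum_k\int_U \partial_{x_k}f_\varepsilon\ddd\vy$.

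To bound $\sum_k\int_U\partial_{x_k}f_\varepsilon$ I would use the coarea/slicing idea behind the estimate (\ref{eq:basicW11Estimate}): for each coordinate $k$, integrate $\partial_{x_k}f_\varepsilon$ over lines parallel to $\ve e_k$; on each such line the integral telescopes to the difference of the values of $f_\varepsilon$ at the two points where the line exits $U$, which is at most $\text{osc}(f_\varepsilon)\leq \MVar(f;U)+o(1)$. Integrating over the $(d-1)$-dimensional projection $\pi_k(U)$ of $U$ onto the hyperplane $x_k=0$, and using that for a Lipschitz domain the projected measure $|\pi_k(U)|$ is bounded by $\frac12|\partial U|$ (each boundary point is counted with the $|\cos|$ of the angle between the normal and $\ve e_k$, and a line meeting the interior crosses $\partial U$ in at least two points, giving the factor $\tfrac12$; summing $\sum_k|\pi_k(U)|\leq \tfrac12 \int_{\partial U}\sum_k|\nu_k|\ddS \leq \tfrac{\sqrt d}{2}|\partial U|$ by Cauchy–Schwarz on the unit normal $\nu$), we obtain $\sum_k\int_U\partial_{x_k}f_\varepsilon\leq \frac{\sqrt d}{2}|\partial U|\,(\MVar(f;U)+o(1))$. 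Wait—I should be careful: the Cauchy–Schwarz factor $\sqrt d$ and the line-counting factor $\tfrac12$ must be combined correctly so that the total constant is exactly $\frac{\sqrt d}{2}$ rather than $\frac d2$; this bookkeeping, together with making the slicing argument rigorous for a general bounded Lipschitz domain (where lines may intersect $U$ in several intervals, so one needs that each maximal subinterval still contributes a boundary-pair and the oscillation bound still applies on each piece), is the main obstacle. Finally, let $\varepsilon\to0$: $\int_U f_\varepsilon\Div\varphi\to\int_U f\Div\varphi$ since $f_\varepsilon\to f$ in $L^1$ on $\supp\varphi$, and the oscillation bounds pass to the limit, yielding $\int_U f\Div\varphi\leq \frac{\sqrt d|\partial U|}{2}\MVar(f;U)$ for every admissible $\varphi$; taking the supremum over $\varphi$ completes the proof.
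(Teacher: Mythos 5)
Your treatment of the main inequality $\Var_\infty(f;U)\leq\frac{\sqrt d\,|\partial U|}{2}\MVar(f;U)$ is essentially the paper's argument, just unpacked. The paper mollifies $f$, bounds $-\int_U\nabla f_n\cdot\varphi$ by $\int_U\nabla f_n\cdot\vs$ (using $\vs_k\partial_{x_k}f_n\geq0$ and $|\varphi_k|\leq1$), converts this to the boundary integral $\int_{\partial U}f_n\,\vn\cdot\vs\,\ddS$ via the divergence theorem, and splits by the sign of $\vn\cdot\vs$ to extract the oscillation of $f_n$ times $\tfrac12\int_{\partial U}|\vn\cdot\vs|\,\ddS\leq\tfrac{\sqrt d}{2}|\partial U|$. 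You do the same thing coordinate by coordinate: Fubini slicing replaces the divergence theorem and $\sum_k|\pi_k(U)|\leq\tfrac12\int_{\partial U}\sum_k|\nu_k|\,\ddS\leq\tfrac{\sqrt d}{2}|\partial U|$ replaces the split-by-sign step; both are the same calculation and deliver the same constant. The stray ``$\leq\sqrt d\,\|\nabla f_\varepsilon\|_{L^1;\ell^1\text{-summed}}$'' is an extraneous factor of $\sqrt d$, but you do not use it, so it is harmless. Both you and the paper also need to extend $f$ monotonically (via $E[f]$) before mollifying so that $f_\varepsilon$ is defined near $\partial U$ with $\sup f_\varepsilon-\inf f_\varepsilon\leq\MVar(f;U)$; you both gloss over this, so no complaint there.

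Your justification of the first inequality, however, runs the implication backwards. Rescaling an admissible field for $\Var_\infty$ by $1/\sqrt d$ to make it admissible for $\Var$ shows that the supremum defining $\Var$ is at least $\tfrac1{\sqrt d}$ times the supremum defining $\Var_\infty$, i.e.\ $\Var_\infty(f;U)\leq\sqrt d\,\Var(f;U)$ --- the converse of what the lemma asserts. The correct (and in fact stronger) statement in the needed direction comes from the other inclusion: if $\|\varphi(\vx)\|_2\leq1$ then automatically $\|\varphi(\vx)\|_\infty\leq\|\varphi(\vx)\|_2\leq1$, so every test field for $\Var$ is a test field for $\Var_\infty$, giving $\Var(f;U)\leq\Var_\infty(f;U)\leq\sqrt d\,\Var_\infty(f;U)$. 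The paper omits this step entirely (it only proves the second inequality), so this is extra material, but as written your reasoning would not compile into a valid proof of the claimed lower bound.
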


	\begin{proof}
		With $f_n = \nu_n * f$ we find that for any fixed $\varphi \in C^1_c(U;\reals^d)$ with $\sup_\vx\|\varphi(\vx)\|_{\infty}\leq 1$, we have
		\begin{align*}
			\int_U f \Div(\varphi) \ddd \vx &= \lim\limits_{n\to\infty}-\int_U \nabla f_n(\vx) \cdot  \varphi(\vx) \ddd \vx
			= 
			\liminf\limits_{n\to\infty}\int_{\partial U} f_n(\vx) \, \vn \cdot  \vs \ddS \vx
			\\
			&= 
			\liminf\limits_{n\to\infty} \left(
			\int_{\partial U} f_n(\vx) \, (\vn \cdot \vs)^+ \ddS \vx
			+
			\int_{\partial U} f_n(\vx) \, (\vn \cdot \vs)^- \ddS \vx
			\right)	
			\leq \frac{\sqrt{d} |\partial U|}{2} \MVar(f;U).
		\end{align*}
	\end{proof}
	A small total variation does not necessarily imply a small M-variation. For $a \in (0,1)$ consider  $f_a\colon (0,1)^2 \to \reals$, 
	\begin{equation*}
		f_a(x,y) = \begin{cases}
			1 \quad & \text{if }x,y<a,\\
			0 \quad & \text{otherwise.}
		\end{cases}
	\end{equation*}
	Clearly, $\MVar(f_a)=1$ but $\Var(f_a) = 2a$ illustrating that there cannot exist a positive constant $C$ with $\Var(f)\geq C \MVar(f)$ for all monotone functions. 
	Still, by decomposing the computational domain $U$ in hyperrectangles, it is possible to get a lower bound on the total variation of monotone continuous functions. 
	For this purpose we define the set of cubes
	\begin{equation*}
		\mathcal{C}_h \coloneqq \left\{ c = \prod_{i=1}^{d} [z_i,z_i+h] \setsep z=(z_1,\dots,z_d)\in h\mathds{Z}^d \right\}.
	\end{equation*}
	The following theorem can be seen as a generalization of the estimate provided by (\ref{eq:basicW11Estimate}).
	\begin{theorem}\label{thm:MVarRefinement}
		Let $U\subset\reals^d$ be a bounded Lipschitz domain. Let $f\in C(\overline{U})$ be monotone. Then there exist constants $C_1,C_2>0$ only depending on $U$ such that for all $h>0$
		\begin{equation*}
			\limsup\limits_{h \searrow 0}C_1 \sum\limits_{\substack{c \in \mathcal{C}_h\\
					c \cap \overline{U} \neq \emptyset}}
			\MVar(f;c) h^{d-1}
			\leq
			\Var(f;U) 
			\leq 
			C_2 \sum\limits_{\substack{c \in \mathcal{C}_h\\
			c \cap \overline{U} \neq \emptyset}}
			\MVar(f;c) h^{d-1}.
		\end{equation*}
	\end{theorem}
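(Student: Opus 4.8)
The plan is to prove the two inequalities separately, with the upper bound (total variation bounded above by the scaled sum of M-variations) being the routine direction and the lower bound being the main obstacle.

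For the upper bound, I would localize the total variation: since $\Var(f;U) = \sum_{c} \Var(f; c\cap U)$ is subadditive over the cells of $\mathcal{C}_h$ intersecting $\overline U$ — more precisely, $\Var(f;U)\leq \sum_{c\in\mathcal{C}_h,\,c\cap\overline U\neq\emptyset}\Var(f;c^\circ\cap U)$ up to the measure-zero contribution of the grid hyperplanes, which carry no mass for a continuous $f$ — it suffices to bound $\Var(f;c\cap U)$ on each individual cell. On a cell $c$ (a cube of side $h$), $f$ is monotone, so by Lemma~\ref{lem:UpperTVBound} applied with $U$ replaced by $c\cap U$ (a bounded Lipschitz domain once $h$ is small, or one can simply use $c$ itself after extending $f$ monotonically via $E$), we get $\Var(f;c\cap U)\leq \sqrt d\,\Var_\infty(f;c\cap U)\leq \frac{d|\partial(c\cap U)|}{2}\MVar(f;c\cap U)$. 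Since $|\partial(c\cap U)|\leq |\partial c| + |\partial U \cap c| = 2d\,h^{d-1} + O(h^{d-1})$ for a Lipschitz domain, and $\MVar(f;c\cap U)\leq \MVar(f;c)$ by monotonicity of $\MVar$ in the domain, summing over cells yields $\Var(f;U)\leq C_2 \sum_c \MVar(f;c)\,h^{d-1}$ with $C_2$ depending only on $d$ and the Lipschitz character of $U$. This holds for every $h>0$, not just in the limit.

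For the lower bound, the issue is that $\MVar(f;c)$ on a tiny cube can be as large as the oscillation of $f$ across that cube, while locally the total variation only sees $\int|\nabla f|$. The key observation is that for a monotone (say nondecreasing) continuous function $f$ on a cube $c$ of side $h$, $\MVar(f;c)$ equals (up to the $2^d$ decomposition freedom, but for a genuinely monotone function one can take a single $f_i$) $f(z+h\mathbf{1}) - f(z)$, the corner-to-corner increment. This increment can be controlled from below by a one-dimensional total variation along the diagonal, or more robustly by $\sum_{k=1}^d \big(f|_{c,\,x_k = z_k+h}\text{-average} - f|_{c,\,x_k=z_k}\text{-average}\big)$ plus error terms; each such boundary-average difference is, by the fundamental theorem of calculus (applied to mollifications $f_n=\nu_n*f$ and passing to the limit as in Lemma~\ref{lem:UpperTVBound}), bounded by $h^{-(d-1)}\int_{c}|\partial_{x_k} f|\,\ddd\vx$ up to a vanishing error. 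Summing over cells, $\sum_c \MVar(f;c)\,h^{d-1} \leq C \sum_c \sum_k \int_c |\partial_{x_k}f|\,\ddd\vx + o(1) = C\,\|\nabla f\|_{L^1(U_h)} + o(1)$ where $U_h$ is the union of cells meeting $\overline U$, which shrinks to $\overline U$; since $f$ is monotone and continuous on $\overline U$ it is in $W^{1,1}$ of a neighborhood after extension, so $\|\nabla f\|_{L^1(U_h)}\to \Var(f;U)$. Taking $\limsup_{h\searrow 0}$ gives $C_1 \limsup_h \sum_c \MVar(f;c) h^{d-1} \leq \Var(f;U)$.

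The main obstacle is making the lower-bound argument rigorous when $f$ is merely continuous and monotone rather than $C^1$: one must justify that $\MVar(f;c)$ is captured by boundary averages of mollifications uniformly enough to sum the errors, and that the total boundary-layer contribution from cells straddling $\partial U$ is negligible in the limit $h\searrow 0$ — this is where the Lipschitz-domain hypothesis is used, to control the number of such cells as $O(h^{-(d-1)})$ each contributing $O(h^{d-1}\cdot\text{osc})$, which is $O(1)\cdot\omega_f(\sqrt d\, h)\to 0$ by uniform continuity of $f$ on the compact set $\overline U$. I would handle the $C^1$ case first as a clean model (where \eqref{eq:basicW11Estimate} already does most of the work), then approximate a general monotone continuous $f$ by its mollifications, noting that $\MVar$ of a mollification on a slightly enlarged cube is bounded by $\MVar(f;c')$ and that both sides pass to the limit.
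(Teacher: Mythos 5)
Your upper bound is essentially the paper's: localize $\Var$ over cells, apply Lemma~\ref{lem:UpperTVBound} on each cell, use $|\partial(c\cap U)|\lesssim h^{d-1}$ for a Lipschitz domain, and sum. That part is fine.

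The lower bound, however, has a genuine gap. The chain you build is $\MVar(f;c)$ $\leq$ (sum of face-average differences) $+$ (error) $\leq h^{-(d-1)}\int_c|\nabla f|$ $+$ (error), and the whole argument hinges on the first inequality with a per-cube error that vanishes after weighting by $h^{d-1}$ and summing. This per-cube inequality is false, and not for reasons of regularity or boundary effects. For a nondecreasing $f$ on a cube $c=\prod[z_k,z_k+h]$, $\MVar(f;c)=f(z+h\mathbf{1})-f(z)$ is a \emph{diagonal} quantity, while the face-average differences give the \emph{cube average} of $\sum_k\partial_k f$ times $h$. These can differ by a factor that is unbounded in $h$: take $f$ equal to a mollification (at scale $\ll h$) of $\ind{\{x>a,\,y>a\}}$ and place the cube so that the corner $(a,a)$ is at distance $\epsilon\ll h$ from a vertex of $c$. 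Then $\MVar(f;c)\approx 1$ while the face-average differences and $h^{-1}\int_c|\nabla f|$ are both $\approx \epsilon/h\to 0$. The ``error'' in your chain is therefore $\approx 1$ on such a cube, i.e.\ $\approx\omega_f(\sqrt d\,h)$, and the number of cubes where this happens is not controlled by the Lipschitz character of $\partial U$ (these bad cubes sit in the interior of $U$, wherever the level sets of $f$ have corner-like structure). Your remark that the obstacle is ``making the lower-bound argument rigorous when $f$ is merely continuous'' and controlling the boundary layer therefore misdiagnoses the real difficulty: the per-cube comparison fails even for $C^1$ monotone $f$, because $\nabla f$ can concentrate near the diagonal of a cube.

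The paper avoids the per-cube comparison entirely. Fix the direction of monotonicity $\vs$ and group the cubes into equivalence classes under $c\sim_\vs c' \iff c'=c+mh\vs$, $m\in\mathbb{Z}$. Along each such diagonal chain the oscillations telescope — for consecutive cubes $c$ and $c+h\vs$ one has $\max_{c}f=\min_{c+h\vs}f$ — so the sum $\sum_{c\in[c']}\MVar(f-f^U;c)$ collapses to the values of $|f-f^U|$ at the two ``endpoint'' cubes of the chain, i.e.\ cubes meeting $\partial U$. Summing over chains and weighting by $h^{d-1}$ turns this into a Riemann sum for the boundary integral $\int_{\partial U}|f-f^U|\,\ddS$, which is then controlled by $\Var(f;U)$ via the $BV$ trace theorem and a Poincar\'e inequality (\cite[Theorems 2.106, 3.44, 3.87]{ambrosio2000functions}). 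This is the step your proposal is missing: the bound comes not from comparing $\MVar$ to $\Var$ cube by cube (which is impossible), but from the global cancellation along monotone diagonals, which reduces the sum to a boundary term.
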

	
	\begin{proof}
		We begin with the first inequality. Let $\epsilon>0$ be arbitrary. 
		We extend $f$ continuously on $\reals^d$ and choose $h$ small enough such that
		$
			|f(\vx)-f(\vy)| < \epsilon
		$
		for all $\vx,\vy$ in some neighborhood of $\overline{U}$ with $\|\vx-\vy\|\leq \delta \coloneqq \sqrt{d} h$.
		We denote by $f^U \coloneqq  \int_U f(x) \ddd x$ the average of $f$ on $U$. Note that 
		\begin{equation*}
			\MVar(f;c) = \MVar(f-f^U;c).
		\end{equation*}
		Let $\vs\in \{-1,1\}^ d$ be such that $\vx\mapsto f(\vs\odot \vx)$ is nondecreasing on $\overline{U}$. We denote by $[c]$ the equivalence class of $c\in\mathcal{C}_h$, $c\subset \overline{U}$, with respect to the relation $\sim_\vs$,
		\begin{equation*}
			c \sim_\vs c' \quad \Leftrightarrow \quad 
			\text{there exists }m \in \mathds{Z} \text{ such that }c+mh \vs   = c'.
		\end{equation*}
		For $c\in [c']$ with $c\cap \partial U = \emptyset$ we find
		$
			\left( c \pm h \vs \right) \cap \overline{U} \neq \emptyset.
		$
		In this case, 
		\begin{equation*}
			\MVar(f;c-h\vs)+
			\MVar(f;c)+
			\MVar(f;c+h\vs)
			=
			\max_{\vx\in \overline{U} \cap (c+h\vs)} f(\vx)
			-
			\min_{\vx\in \overline{U} \cap (c-h\vs)} f(\vx).
		\end{equation*}
		Consequently, choosing $h$ sufficiently small we obtain with $I_\delta(\partial U) \coloneqq \{x \setsep \dist(x,\partial U)<\delta\}$
		\begin{align*}
			\sum\limits_{\substack{c \in \mathcal{C}_h\\
			c \cap \overline{U} \neq \emptyset}}
			\MVar(f;c) h^{d-1}
			&=
			\sum\limits_{[c']}\sum\limits_{c \in [c']}
			\MVar(f-f^U;c) h^{d-1}
			\leq
			4
			\sum\limits_{[c']}\sum\limits_{\substack{c \in [c']\\
			c \cap \partial U \neq \emptyset}}
			\max_{\vx \in c}|f(\vx)-f^U| \frac{|c|}{2h}
			\\
			&=
			4\sum\limits_{\substack{c \in \mathcal{C}_h\\
					c \cap \partial U \neq \emptyset}}
			\max_{\vx \in c\cap \overline{U}}|f(\vx)-f^U| \frac{|c|}{2h}
			\leq
			4 \int_{I_\delta(\partial U)} \frac{1}{2h} \left(|f(\vx)-f^U| +\epsilon\right) \ddd \vx.
		\end{align*}
		Therefore, as a consequence of \cite[Theorem 2.106]{ambrosio2000functions}, we find
		\begin{align*}
			\limsup_{h\searrow0} 
			\sum\limits_{\substack{c \in \mathcal{C}_h\\
					c \cap \overline{U} \neq \emptyset}}
			\MVar(f;c) h^{d-1}
			&\leq 
			C \left( \int_{\partial U} |f(\vx)-f^U| \ddd \vx + \epsilon \right).
		\end{align*}
		By \cite[Theorem 3.44]{ambrosio2000functions} and \cite[Theorem 3.87]{ambrosio2000functions} we recover the first inequality since $\epsilon$ was arbitrary.
		
		Next we prove the second inequality. Note that due to the continuity of $f$ we have
		\begin{equation*}
		\Var(f;U) = 
		\sum\limits_{\substack{c \in \mathcal{C}_h\\
				c \cap \overline{U} \neq \emptyset}}
			\Var(f;U\cap c).
		\end{equation*}
		Furthermore, we note that $\partial (U \cap c) \subset (\partial U \cap c) \cup \partial c$. Clearly, $|\partial U \cap c| \leq C h^{d-1}$ where $C>0$ only depends on $U$. Thus, we may apply Lemma~\ref{lem:UpperTVBound} to derive the second inequality.
	\end{proof}

	\section{Estimatable Variation Neural Networks (EVNNs)}\label{sec:EVNNs}
	
	In this section we propose a new class of neural networks with estimatable variation. Clearly, since every fully connected neural network with locally Lipschitz continuous activation function is Lipschitz continuous on $[0,1]^d$, most neural networks are $BMV$ functions in the sense of Section~\ref{sec:BV_funcs}. But it is highly nontrivial to compute the M-variation of a given network even on simple domains. To be able to efficiently estimate the M-variation of the network on hyperrectangles, we propose the following basic architecture.
	\begin{definition}
	We say that a network has an architecture of type (A) if it can be decomposed as
	\begin{equation}\label{naiveEVNN}
	\eta(\vx) = \sum_{i=1}^{K}\matA_{L+1,i}\cdot \hat\sigma \bigg(\matA_{L,i}\cdot \hat\sigma\Big(...\hat\sigma(\matA_{1,i}\cdot (\vs_i\odot \vx)+\vb_{1,i})\Big)+\vb_{L,i}\bigg) +{\vb_{L+1,i}}
	\eqqcolon
	\sum_{i=1}^{K} \eta_i(\vs_i \odot \vx),
	\end{equation}
	with a nondecreasing activation function $\sigma$, $\vs_1,\dots,\vs_K\in\{-1,1\}^d$, $\vb_{l,i}\in\reals^{m_l}$, $l=1,\dots,L+1$ , $i=1,\dots,K$, and matrices $\matA_{l,i}\in \reals_{\geq 0}^{m_l,m_{l-1}}$, $l=1,...,L+1$, $i=1,...,K$ with nonnegative elements. We refer to the networks $\eta_i$ as \textit{subnetworks}.
	\end{definition}
	The nonnegativity of the weight matrices $\matA_{l,i}$ ensures that each subnetwork $\eta_i$ is nondecreasing. In this way the above definition mimics the definition of $BMV$ functions. 
	
	The major drawback of this architecture is obvious. If the target function is an arbitrary element in $BMV$, up to $K=2^d$ summands are necessary in (\ref{naiveEVNN}). Consequently, the above architecture suffers from the curse of dimensionality even without considering the learning stage. If the target function can be decomposed into $K << d$ monotone functions, there is still the question of how to efficiently find the right \textit{directions of monotonicity} $\vs_1,\dots,\vs_K$. Roughly speaking, a monotone function with direction of monotonicity $\vs_i$ has shocks only along hypersurfaces whose normal vector is restricted to the space
	\begin{equation*}
		\left\{\vn = \beta \sum_{k=1}^d \alpha_k \big( \ve{e}_k \cdot \vs_i \big) \ve{e}_k \setsep \beta \in \{-1,1\},\, \alpha_k>0, \, k=1,\dots, d \right\}.
	\end{equation*}
	Thus, one class of functions for which architectures of type (A) are suitable are functions which are non-smooth only in certain variables in the following sense:
	$
		(x_1,\dots, x_l) \mapsto f(x_1,\dots,x_l,x_{l+1},\dots, x_d)
	$
	is smooth for all fixed $x_{l+1},\dots, x_d$.
	Even in the more general high dimensional setting,
	there are several ways to tackle this issue -- the most straightforward being to investigate learnable bases of monotonicity.
	We leave this issue for future work and focus on illustrating the advantages of this architecture for solving partial differential equations with non-smooth solutions in low dimensions. Currently, these are typically still challenging problems for neural networks.
	
	Formally, the number of summands in (\ref{naiveEVNN}) can be halfed by learning the sum of the functions nondecreasing in $\vs_i \odot \vx$ and $\vs_j \odot \vx$  respectively, with $i,j$ such that $\vs_i = -\vs_j$. This corresponds to allowing the elements of $\matA_{L+1,i}$ to take negative values, i.e., having the last layer of each subnetwork be a standard linear layer. This leads to the following definition.
	\begin{definition}
		We say a network has an architecture of type (A') 
		if it can be decomposed as
		\begin{equation*}
			\eta(\vx) = \sum_{i=1}^{K}\matA_{L+1,i}\cdot \hat\sigma \bigg(\matA_{L,i}\cdot \hat\sigma\Big(...\hat\sigma(\matA_{1,i}\cdot (\vs_i\odot \vx)+\vb_{1,i})\Big)+\vb_{L,i}\bigg) +{\vb_{L+1,i}}\,,
		\end{equation*}
		with a nondecreasing activation function $\sigma$, $\vs_1,\dots,\vs_K\in\{-1,1\}^d$, $\vb_{l,i}\in\reals^{m_l}$, $l=1,\dots,L+1$ , $i=1,\dots,K$, and matrices $\matA_{l,i}\in \reals_{\geq 0}^{m_l\times m_{l-1}}$, $l=0,...,L$, $i=1,...,K$ with nonnegative elements and $\matA_{L+1,i}\in \reals^{N\times m_L}$.
	\end{definition}
	Figure~\ref{fig:Implementation} illustrates this architecture.
	{%
	The definition does not specify restrictions on $K$. We are mostly interested in architecture (A') to reduce the computational cost. Hence, if we work with networks with architecture of type (A'), we will assume $K \leq 2^{d-1}$.
	}
	
	Weight matrices $\matA_{1,i}, \dots, \matA_{L,i}$ with positive elements are necessary for the proposed architectures. We ensure the positivity of the weights by applying a function $\modi\colon \reals \to \reals_{\geq 0}$ elementwise to the weight matrices. The subnetworks of EVNNs with an architecture of type (A') are implemented by setting
	$\matA_{l,i}=\modi(\matB_{l,i})$ 
	{%
	for some matrices $\matB_{l,i}$ for $l=1,\dots,L$. }
	Likewise, $\matA_{l,i}=\modi(\matB_{l,i})$ for $l=1,\dots,L+1$ for networks with an architecture of type (A).
	We refer to functions $\modi$ as weight modifiers.
	
	We define the estimated M-variation for networks with an architecture of type (A) and (A') on $U\subset\reals^d$ by
	\begin{equation*}
		\EMVar(\eta; U) \coloneqq \sum_{i=1}^K \left( \sup_{\vx \in U} \eta_i(\vx) - \inf_{\vx \in U} \eta_i(\vx) \right)
	\end{equation*}
	and
	\begin{align*}
		\EMVar(\eta; U) \coloneqq \sum_{i=1}^K |\matA_{L+1,i}| \cdot \Bigg( &\sup_{\vx \in U}
		 \hat\sigma \bigg(\matA_{L,i}\cdot \hat\sigma\Big(...\hat\sigma(\matA_{0,i}\cdot \vs_i\odot \vx+\ve{b}_{0,i})\dots\Big)+\vb_{L,i}\bigg)\\
		&- \inf_{\vx \in U} 
		\hat\sigma \bigg(A_{L,i}\cdot \hat\sigma\Big(...\hat\sigma(A_{0,i}\cdot \vs_i\odot \vx+\vb_{0,i})\dots\Big)+\vb_{L,i}\bigg)
		\Bigg),
	\end{align*}
	respectively. Here, the elements of $|\matA_{L+1,i}| \in \reals^{m_{L+1}\times m_L}$ are given by
	$
		\big(|\matA_{L+1,i}|\big)_{l,k} = \Big|\big(\matA_{L+1,i}\big)_{l,k} \Big|.
	$
	Clearly, the components of the estimated M-variation $\EMVar(\eta; U)$ provide an upper bound on the M-variation of the corresponding components of the network $\eta$. In what follows, we will write $\EMVar(\eta )$ instead of $\EMVar(\eta; U)$ if there is no ambiguity. To improve the estimate on the M-variation of $\eta$ provided by $\EMVar$, we can add a regularization term proportional to $\EMVar(\eta)$ to any loss function. As presented in Section~\ref{sec:VarEst}, this regularization also penalizes networks with a large variation. In fact, $\EMVar$ can be used to get an upper bound on the variation of the network.

\begin{figure}
	\centering
	\begin{subfigure}{0.49\textwidth}
		\centering
		\includegraphics{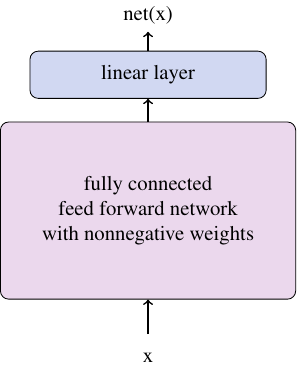}
		\caption{$d=1$.}
		\label{fig:three sin x}
	\end{subfigure}
	\hfill
		\begin{subfigure}{0.49\textwidth}
		\centering
		\includegraphics{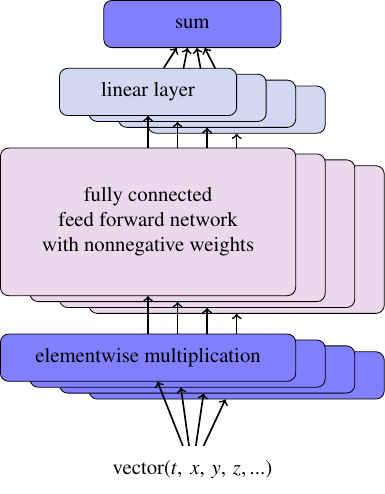}
		\caption{$d>1$.}
		\label{fig:y equals x}
	\end{subfigure}

	\caption{Possible implementations for the proposed architecture for EVNNs from $\reals^d$ to $\reals$. It is straightforward to generalize this architecture to mappings from $\reals^d$ to $\reals^N$.}
	\label{fig:Implementation}
\end{figure}

	\subsection{Choice of the activation function}\label{sec:choice_acti}
	Choosing a nondecreasing activation function is crucial for the architectures of type (A) and (A'). Note that most of the currently popular activation functions are nondecreasing. In order to approximate the variation of the target function as sharply as possible, we need to impose additional restrictions on the choice of activation function. In particular, convex activation functions (like the ReLU activation function) should be avoided. 
	{%
	Otherwise networks with architectures of type (A) are convex themselves. And for networks $\eta$ with architectures of type (A') and the proposed subnetworks with $K\leq 2^{d-1}$ the mapping
	\begin{equation*}
		h \mapsto \EMVar\Big(\eta; \, \{a_1\} \times \dots \times \{a_{j-1}\} \times [a_j+h, b_j+h] \times \{a_{j+1}\}\times \dots \times \{a_d\}\Big)
	\end{equation*}
	is convex for all $a_i\in\reals$, $b_j>a_j$ if there is no subnetwork with direction of monotonicity $\vs$ with $\vs\cdot \ve{e}_j <0$. 
	}
	Consequently, the network will systematically overestimate the M-variation of the approximated function. 
 
	To enable a sharp estimate on the M-variation, we will require the activation function $\sigma:\reals \to \reals$ to satisfy
	\begin{equation}\label{eq:actiProperty}
	\lim_{x\to-\infty}\sigma(x) = \sigma_{-}, \qquad \lim_{x\to \infty}\sigma(x) = \sigma_{+},
	\end{equation}
	for some constants $\sigma_{-}<\sigma_{+}$. For sufficiently deep networks this condition is sufficient for approximations with sharp estimate on the M-variation, see Theorem~\ref{thm:UniBMVapprox} and Corollary~\ref{cor:BMVApproxinL1}. For the numerical test cases presented in Section~\ref{sec:num_experiments} we restrict ourselves to the hyperbolic tangent function which satisfies (\ref{eq:actiProperty}).
	
	\begin{remark}
	For networks with an architecture of type (A) or (A'), the inputs of all activation functions are monotone by design. Assuming (\ref{eq:actiProperty}) to hold, every neuron is only able to \textit{fire} once, i.e., once the input has surpassed a certain threshold at a point $\vx\in\reals^d$, the output of this neuron will stay mostly the same for all $\vy\geq \vx$. This limits to which extend neurons can be \textit{reused} in different parts of the domain. Consequently we might expect a reduced expressiveness compared to standard feed-forward neural networks with the same number of parameters.
	
	There are several obvious approaches to fix the aforementioned problem. Maybe the simplest is to choose an unbounded monotone activation function with 
	steplike behavior, e.g.,
	\begin{equation*}
		\sigma(x) = \int_{0}^{x} |\cos(y)| \ddd y.
	\end{equation*} 
	We experimented with this activation function but for our numerical experiments, using the $\tanh$ activation function lead to sufficient expressivity of our networks. 
	\end{remark}
	
	\begin{remark}\label{rem:AvsAprimeForTanh}
	The choice of the activation function also influences the similarity of architectures of type (A) and (A'). If $\sigma$ satisfies
	\begin{equation}\label{eq:generalized_uneven_property}
		a\sigma(-bx-c)=-\sigma(x) \qquad \text{for all }x\in\reals
	\end{equation}
	for some suitable constants $a,b>0$, $c\in\reals$, it is easy to verify that every network with an architecture of type (A') is also a network with an architecture of type (A). 
	Obviously, $tanh$ satisfies (\ref{eq:generalized_uneven_property}).
	\end{remark}
	\subsection{Approximation properties}\label{approxProps}
	
	In this section, we briefly sketch the approximation properties of networks with the proposed architectures. A more detailed discussion is given in Appendix~\ref{App:approxProps}. There, all results only mentioned in this section are proven and a more detailed proof for Theorem~\ref{thm:UniBMVapprox} is given.
	
	First, we note that sufficiently large networks with architectures of either type (A) or of type (A') can essentially approximate the same functions with respect to the uniform norm on compact sets, even for activation functions which do not satisfy (\ref{eq:generalized_uneven_property}). Using the continuity of $\sigma$ it is easy to see that property (\ref{eq:actiProperty}) is sufficient.
	
	The standard uniform approximation results in $C^k$ norms and $L^p$ norms are also valid for shallow networks with architecture of type (A) or (A'). This is straightforward as shallow neural networks can easily be approximated by shallow networks with architecture of type (A) or (A'). 
	
	While it is important to know that the networks with architectures of type (A) are able to approximate functions up to arbitrary accuracy in various common metrics, for our purposes a different question is more natural: 
	\begin{question}\label{q:1}
		Can functions with bounded M-variation be approximated up to arbitrary accuracy with networks whose estimated M-variation remains bounded? 
	\end{question}
	This question is essential for the algorithms presented in Section~\ref{sec:applications}. For example, the M-variation of an EVNN determines how many sampling points are necessary to guarantee a given accuracy of the numerical integration scheme presented in Section~\ref{sec:IntegratingBMVFunctions}. 
	If the M-variation increases as we approach a target function, the numerical integration would get increasingly computationally expensive. 
	Clearly, without a bound on the computational cost an algorithm need not terminate within reasonable or even finite time.
	
	For simplicity, we restrict ourselves to functions defined on hyperrectangles $H = \prod_{i=1}^d [a_i, b_i]$ and networks with architecture of type (A).
	
	\begin{theorem}[Universal approximation of $BMV$ functions]\label{thm:UniBMVapprox}
		For any continuous $BMV$ function $f\colon H \to \reals$, any $\epsilon>0$, any nondecreasing activation function $\sigma$ satisfying (\ref{eq:actiProperty}) and any $L\geq 3$, there exists a EVNN with an architecture of type (A) with $L$ hidden layers and activation function $\sigma$ satisfying $\|f-\eta\|_\infty \leq \epsilon$ and $\EMVar(\eta;H) < \MVar(f;H)$.
	\end{theorem}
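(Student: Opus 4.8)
The plan is to reduce the problem to approximating a finite sum of ``staircase'' indicator functions by the two-hidden-layer building blocks $\nu(\cdot;a_1,\dots,a_d,\epsilon,\delta)$ from \eqref{monotone_ind_approx}, and then use one extra hidden layer to assemble these blocks without inflating the estimated M-variation. Concretely, I would first write $f=\sum_{i\in J}f_i(\vs_i\odot\vx)$ with $f_i$ nondecreasing, and choose the decomposition so that $\sum_i(\sup f_i-\inf f_i)$ is within $\epsilon/4$ of $\MVar(f;H)$; by uniform continuity of $f$ on the compact set $H$, and since the extension $E[f_i;H]$ preserves monotonicity and the oscillation, I may assume each $f_i$ is a continuous nondecreasing function on a hyperrectangle. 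The key reduction is then: each continuous nondecreasing $g\colon H\to\reals$ can be uniformly approximated, up to error $\epsilon/(8\lvert J\rvert)$, by a finite nonnegative combination $\sum_{m=1}^{M}c_m\,\ind{\{\vx\,:\,x_j>a^{(m)}_j\ \text{for all }j\}}$ of axis-aligned ``upper-corner'' indicators, with $\sum_m c_m\le (\sup g-\inf g)+\epsilon/(8\lvert J\rvert)$. This is a standard layer-cake type decomposition: writing the sublevel/superlevel sets of $g$ and approximating each from inside by such corner sets (using monotonicity of $g$), the total mass of the coefficients is controlled by the oscillation of $g$ because the corner sets are nested.

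Next I would replace each corner indicator by the smooth surrogate $\nu(\vx;a^{(m)},\epsilon',\delta')$ of \eqref{monotone_ind_approx}. Each such $\nu$ is itself realizable as two hidden layers of a type-(A) subnetwork with nonnegative weights and satisfies $0\le\nu\le1$, $\nu$ close to $0$ outside the corner and close to $1$ well inside it; crucially $\EMVar(\nu;H)\le \sup_H\nu-\inf_H\nu\le 1$, i.e.\ its estimated M-variation is at most its true sup--inf, which we can make as close to $1$ as we wish. Summing $\sum_m c_m\,\nu(\vx;a^{(m)},\dots)$ gives one subnetwork for the direction $\vs_i$; concatenating in parallel over $m$ and taking a nonnegative linear combination in the final layer keeps all weights nonnegative, and by additivity of $\EMVar$ across the parallel branches (the estimated M-variation of a sum of nondecreasing-in-$\vs_i$ subnetworks is the sum of their estimated M-variations, which here is $\sum_m c_m(\sup\nu-\inf\nu)$), we get $\EMVar(\eta_i)\le\sum_m c_m + (\text{small})\le(\sup f_i-\inf f_i)+\epsilon/(4\lvert J\rvert)$ after choosing $\delta',\epsilon'$ small. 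The third hidden layer is used only to realize, via Lemma~\ref{lem:aux_acti_property}, an approximate identity so that the two-layer blocks can be packaged inside an $L$-hidden-layer architecture for any $L\ge3$ (for $L>3$ the remaining layers likewise approximate the identity). Then $\eta=\sum_{i\in J}\eta_i(\vs_i\odot\vx)$ has $\|f-\eta\|_\infty\le\epsilon/2$ and $\EMVar(\eta;H)\le\MVar(f;H)+3\epsilon/4$.

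Finally, to land the strict inequality $\EMVar(\eta;H)<\MVar(f;H)$ rather than $\le\MVar(f;H)+\epsilon$, I would run the above with a tolerance $\epsilon_0<\epsilon$ chosen so small that $\MVar(f;H)+3\epsilon_0/4<\MVar(f;H)$ — but this is impossible as stated, so instead the honest route is: the scaling constants $c_m$ and the surrogates $\nu$ can be shrunk by a factor $(1-\epsilon_0)$ in amplitude, which multiplies $\EMVar$ by $(1-\epsilon_0)$ and only perturbs the sup-norm error by $O(\epsilon_0\|f\|_\infty)$; choosing $\epsilon_0$ small relative to $\epsilon/\max(1,\|f\|_\infty)$ and relative to $\MVar(f;H)$ yields both $\|f-\eta\|_\infty\le\epsilon$ and $\EMVar(\eta;H)\le(1-\epsilon_0)\bigl(\MVar(f;H)+\epsilon_0\bigr)<\MVar(f;H)$. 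I expect the main obstacle to be the bookkeeping in the second paragraph: verifying that the parallel composition of the $\nu$-blocks genuinely has estimated M-variation equal to $\sum_m c_m(\sup\nu-\inf\nu)$ (and not something larger coming from cross terms), and that the approximate-identity layers inserted to reach depth $L$ do not themselves contribute to $\EMVar$. Both reduce to the observation that $\EMVar$ is defined branch-by-branch and that an approximate identity acting on an already-monotone, bounded signal can be taken to be a nondecreasing map with sup--inf arbitrarily close to that of its input, but making this fully rigorous requires care with the nested $\sup/\inf$ over $\vx\in H$ inside the definition of $\EMVar$ for type-(A) networks.
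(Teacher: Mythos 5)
Your ``key reduction'' step is false, and this is the crux of the whole argument. You claim that any continuous nondecreasing $g\colon H\to\reals$ can be uniformly approximated by a finite nonnegative combination $\sum_m c_m\,\ind{\{\vx\,:\,x_j>a^{(m)}_j\ \forall j\}}$ of axis-aligned corner indicators, with total mass $\sum_m c_m$ close to $\sup_H g-\inf_H g$. This fails already for $d=2$. Take $g(x,y)=\max(x,y)$ on $[0,1]^2$, so $\sup g-\inf g=1$. Suppose $h=\sum_m c_m\ind{C_m}$ with $c_m\geq 0$, $\sum_m c_m\leq 1+\delta$ and $\|g-h\|_\infty\leq\delta$ for some small $\delta$. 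Since $h(1,0)\geq 1-\delta$, the corners containing $(1,0)$ — those with $a^{(m)}_1<1$, $a^{(m)}_2<0$ — carry mass at least $1-\delta$. Symmetrically the corners containing $(0,1)$ carry mass at least $1-\delta$. Since the total mass is at most $1+\delta$, corners in the intersection of both families carry mass at least $1-3\delta$; but those are corners with both $a^{(m)}_1<0$ and $a^{(m)}_2<0$, hence containing $(0,0)$. This forces $h(0,0)\geq 1-3\delta$, while $g(0,0)=0$, a contradiction for $\delta<1/4$. The layer-cake decomposition of a monotone function writes it as an integral of indicators of superlevel sets, which are general upper sets, not corners; and the indicator of a union of corners is \emph{not} a nonnegative combination of corner indicators (inclusion–exclusion introduces signs), so the total mass cannot be controlled by the oscillation of $g$ the way you assert.

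This is precisely where the paper's proof diverges from yours, and where the hypothesis $L\geq 3$ earns its keep. The paper also starts from the $\nu$-blocks of \eqref{monotone_ind_approx}, which do realize corner indicators with two hidden layers — on this point you and the paper agree. But the paper then spends a \emph{third} hidden layer to pass from a sum of corner indicators to the indicator of their union: for each discretization level $z_k=f(\ve{0})+k\delta_1$ it covers $\partial V_{z_k}$ by finitely many corners, builds two-hidden-layer surrogates $\eta^{(k,l)}$ for those corners, and then forms $\eta^{(k)}(\vx)=\sigma\bigl(-2S+3S/(1-\delta_3)\sum_l\eta^{(k,l)}(\vx)\bigr)$. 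The outer $\sigma$ saturates: it maps the sum $\sum_l\eta^{(k,l)}$, which could be as large as $K_2$, to a value in $[0,1]$, so $\sup\eta^{(k)}-\inf\eta^{(k)}\leq 1$ regardless of how many corners were needed to cover the level set. The final network $\eta=f(\ve{0})+\delta_1\sum_k\eta^{(k)}$ then has $\EMVar(\eta)\leq\delta_1(K_1-1)=\MVar(f)-\delta_1$, and $\delta_1$ is chosen (around $\tilde\epsilon/3$) so that both the sup-norm error and the strict M-variation deficit come out. Your parallel-assembly-and-sum construction has no analogue of this saturating third layer; the $c_m$'s would have to grow with the complexity of the level sets, and $\EMVar$ would not be bounded by $\MVar(f)$. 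The rest of your plan (mollification, shrinking by $(1-\epsilon_0)$ to get strict inequality, padding to depth $L$ via Lemma~\ref{lem:aux_acti_property}) is in the right spirit and close to the paper, but the argument does not go through without replacing the corner-indicator reduction by the superlevel-set/union-of-corners construction.
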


	\begin{proof}
		It is no restriction to assume $\sigma_- =0$ and $\sigma_+=1$.
		It is straightforward to approximate the identity function with a single hidden layer. Thus, we may restrict ourselves to the case $L=3$. By definition, for every $\epsilon'>0$ there exist bounded monotone functions $f_1, \dots, f_{2^d} \colon \reals^d \to \reals$ with 
		\begin{equation*}
			f(\vx) = \sum_{i=1}^{2^d} f_i(\vx), \qquad\MVar(f) + \epsilon' > \sum_{i=1}^{2^d} \left(\max_H f_i - \min_H f_i \right)
			=\sum_{i=1}^{2^d} \MVar(f_i).
		\end{equation*}
		We extend $f$ to $C(\reals^d)\cap BMV(\reals^d)$ without changing the M-variation by setting
		\begin{equation*}
			f(x_1,\dots, x_d) = f(\max\{\min\{x_1, a_1\}, b_1\}, \dots,\max\{\min\{x_d, a_d\}, b_d\}). 
		\end{equation*}
		By a standard mollification argument, we may assume all $f_i$ to be continuous as well. Clearly, it suffices to show the proposition for all $f_i$. Therefore, we may assume from now on, that $f$ is nondecreasing. Moreover, it is no restriction to assume $0 = f(a_1,\dots,a_d) \leq f \leq f(b_1,\dots,b_d)=1$. Clearly, we may approximate $f$ uniformly by the sum
		$
		\sum_{1 \leq k<n} \frac1n\ind{\{f\geq k/n\}}
		$. By continuity of $f$, there exists $\delta>0$ such that
		\begin{equation*}
			\dist\Big(\partial \{f\geq (2k+1)/(2n)\},\,\{f<k/n\}\cup \{f\geq (k+1)/n\} \Big) > \delta \qquad \text{for all }k = 1,\dots, n-1.
		\end{equation*}
		Therefore, it is sufficient to approximate $\ind{\{f\geq k/n\}}$ by nondecreasing functions $g_k$ with
		\begin{equation}\label{eq:newAuxEq}
			0\leq g_k \leq 1, 
			\qquad g_k<\epsilon'' \text{ on } \{f< k/n\}
			\qquad 
			g_k>1-\epsilon'' \text{ on } \{f\geq (k+1)/n\},
		\end{equation} 
	with $\epsilon''$ sufficiently small and $k=1,\dots, n-1$. We fix such a $k$. By compactness, there are a finite number of points  $\vx^{(l)}= (x^{(l)}_1, \dots,x^{(l)}_d)$, $l=1,\dots, K$, with $\dist(\vx^{(l)}, \partial \{f\geq (2k+1)/(2n)\}) < \delta/2$ and
		\begin{equation*}
			\{f\geq (k+1)/n\} \subset \bigcup_{l=1}^K \{\vx \geq \vx^{(l)}\} \eqqcolon A \subset \{f\geq k/n\}.
		\end{equation*}
		Instead of approximating $\ind{\{f\geq k/n\}}$ directly, we will approximate $\ind{A}$. Take $S=S(\epsilon'')$ large enough such that $\sigma(-S)< \epsilon''$ and $\sigma(S) > 1- \epsilon''$. Then we may approximate
		\begin{equation*}
			\ind{A}(\vx) = \ind{\cup_{l=1}^K \{\vy \geq \vx^{(l)}\}}(\vx) \approx \sigma\left(-S+2S\sum_{l=1}^K \ind{\{\vy \geq \vx^{(l)}\}}(\vx)\right).
		\end{equation*}
		Here, we use $\approx$ to indicate an approximation which allows the right hand side to satisfy (\ref{eq:newAuxEq}).
		Obviously, $\{\vy \geq \vx^{(l)}\} = \cap_{i=1}^d \{\vy=(y_1, \dots, y_d) \setsep y_i \geq x^{(l)}_i\}$. Therefore, possibly choosing a larger $S$,
		\begin{equation*}
		 	\ind{A}(\vx) \approx \sigma\left(-S+2S\sum_{l=1}^K\sigma\left(-(2d-1)S+2S\sum_{i=1}^d \ind{\{\vy=(y_1, \dots, y_d) \setsep y_i \geq x^{(l)}_i\}}(\vx)\right) \right).
		\end{equation*}
	Again after possibly taking a larger $S$, we may approximate $\ind{\{\vy=(y_1, \dots, y_d) \setsep y_i \geq x^{(l)}_i\}}(\vx)$ by $\sigma(2\sqrt{d}S(\vx-\vx^{(l)})\cdot \ve{e}_i/\delta)$ and $\ind{\{f\geq k/n\}}(\vx)$ by $\ind{A}(\vx)$ to find
	\begin{equation*}
		\ind{\{f\geq k/n\}}(\vx) \approx 
		\sigma\left(-S+2S\sum_{l=1}^K\sigma\left(-(d-1)S+2S\sum_{i=1}^d 
		\sigma(2\sqrt{d}S(\vx-\vx^{(l)})\cdot \ve{e}_i/\delta)
		\right) \right).
	\end{equation*}
This immediately leads to a network with the properties listed in the theorem.
	\end{proof}

	If we drop the assumption of $f$ being continuous, we have to restrict ourselves to convergence in $L^p(H)$, $1 \leq p < \infty$.

	\begin{corollary}\label{cor:BMVApproxinL1}
		Let $f \in BMV(H)$, $\epsilon>0$, $1\leq p< \infty$. For any nondecreasing activation function $\sigma$ satisfying (\ref{eq:actiProperty}) and any $L\geq 3$ there exists a EVNN with an architecture of type (A) with $L$ hidden layers and activation function $\sigma$ satisfying $\|f-\eta\|_{L^1(H)} \leq \epsilon$ and $\EMVar(\eta) < \MVar(f)$.
	\end{corollary}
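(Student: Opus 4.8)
The plan is to reduce the discontinuous case to the continuous case already settled in Theorem~\ref{thm:UniBMVapprox}. We prove the slightly stronger statement with $\|f-\eta\|_{L^p(H)}\le\epsilon$ for the given $p\in[1,\infty)$.

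First I would produce a \emph{continuous} surrogate for $f$ that is $L^p(H)$-close and has no larger M-variation. By Definition~\ref{bmv} there is a decomposition $f(\vx)=\sum_{i\in J}f_i(\vs_i\odot\vx)$ with nondecreasing $f_i\colon\reals^d\to\reals$ and $\sum_{i\in J}\bigl(\sup_{H}f_i(\vs_i\odot\vx)-\inf_{H}f_i(\vs_i\odot\vx)\bigr)<\MVar(f;H)+\epsilon'$, where $\epsilon'>0$ is fixed later. Replacing each $f_i$ by $f_i\circ\pi_i$, with $\pi_i$ the nearest-point projection of $\reals^d$ onto the hyperrectangle $\vs_i\odot H$ (coordinatewise nondecreasing, and the identity on $\vs_i\odot H$), keeps every $f_i$ nondecreasing, preserves $\sum_i f_i(\vs_i\odot\vx)=f(\vx)$ on $H$, and forces $\sup_{\reals^d}f_i=\sup_{\vs_i\odot H}f_i$, $\inf_{\reals^d}f_i=\inf_{\vs_i\odot H}f_i$; in particular each $f_i$ is now bounded on all of $\reals^d$. (This flattening is exactly the step used in the proof of Theorem~\ref{thm:UniBMVapprox}.) Mollifying, put $f_i^{(n)}=\nu_n*f_i$ and $g_n(\vx)=\sum_{i\in J}f_i^{(n)}(\vs_i\odot\vx)$. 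Each $f_i^{(n)}$ is continuous, nondecreasing, and satisfies $\inf_{\reals^d}f_i\le f_i^{(n)}\le\sup_{\reals^d}f_i$, so $g_n\in BMV(H)$ is continuous with $\MVar(g_n;H)\le\sum_{i\in J}\bigl(\sup_{H}f_i^{(n)}(\vs_i\odot\vx)-\inf_{H}f_i^{(n)}(\vs_i\odot\vx)\bigr)<\MVar(f;H)+\epsilon'$. Since each $f_i$ is bounded and $f_i^{(n)}\to f_i$ in $L^1_{\mathrm{loc}}$, the $g_n$ are uniformly bounded on $H$ and $g_n\to f$ in $L^1(H)$; with the uniform bound and the interpolation inequality $\|\cdot\|_{L^p(H)}^p\le\|\cdot\|_{L^\infty(H)}^{p-1}\|\cdot\|_{L^1(H)}$ this upgrades to $g_n\to f$ in $L^p(H)$.

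It remains to turn $g_n$ into an EVNN without losing the strict variation bound. Assume $\MVar(f;H)>0$ (the constant case being degenerate, as for Theorem~\ref{thm:UniBMVapprox}), set $\lambda\coloneqq\MVar(f;H)/(\MVar(f;H)+\epsilon')<1$ and $h_n\coloneqq\lambda g_n$, so that $h_n$ is continuous, $\MVar(h_n;H)\le\lambda\,\MVar(g_n;H)<\MVar(f;H)$, and $\|f-h_n\|_{L^p(H)}\le\|f-g_n\|_{L^p(H)}+(1-\lambda)\|g_n\|_{L^p(H)}$. Choosing $\epsilon'$ small enough that $(1-\lambda)(\|f\|_{L^p(H)}+1)<\epsilon/3$, then $n$ large enough that $\|f-g_n\|_{L^p(H)}<\epsilon/3$ and $\|g_n\|_{L^p(H)}<\|f\|_{L^p(H)}+1$, and finally applying Theorem~\ref{thm:UniBMVapprox} to the continuous function $h_n$ with tolerance $\epsilon''=(\epsilon/3)|H|^{-1/p}$, yields an EVNN $\eta$ of type (A) with $L\ge 3$ hidden layers and activation $\sigma$ such that $\|h_n-\eta\|_\infty\le\epsilon''$ and $\EMVar(\eta;H)<\MVar(h_n;H)<\MVar(f;H)$; hence $\|f-\eta\|_{L^p(H)}\le\|f-h_n\|_{L^p(H)}+\epsilon/3<\epsilon$. (Alternatively one may skip the rescaling and simply replay the proof of Theorem~\ref{thm:UniBMVapprox} verbatim, measuring the mollification error of its surrogate $\hat f$ in $L^p(H)$ rather than in $L^\infty$; the $\tfrac{2\epsilon}{13}$ slack in the M-variation there already delivers the strict bound.)

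The only delicate point is that passing from the possibly discontinuous $f$ to the continuous $g_n$ must not inflate the estimated M-variation; this is exactly what the projection/flattening in the first step guarantees, since afterwards the supremum and infimum of each $f_i$ over $\reals^d$ — hence over any set containing the mollifier support — coincide with those over $\vs_i\odot H$. Everything else is a routine combination of $L^p$-continuity of mollification on a bounded set with a black-box use of Theorem~\ref{thm:UniBMVapprox}.
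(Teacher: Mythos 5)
Your proof is correct and takes essentially the same route as the paper: flatten the components of a near-optimal decomposition of $f$ so their range is controlled, mollify to get a continuous $BMV$ surrogate, and hand off to Theorem~\ref{thm:UniBMVapprox}. The one place where you are more careful than the terse proof in the paper is the strict inequality: the paper simply asserts $\MVar(E_h[f];H)\le\MVar(f)$, which (as written) implicitly needs the infimum defining $\MVar(f;H)$ to be attained, whereas your $\lambda$-rescaling of $g_n$ explicitly absorbs the $\epsilon'$ slack of a merely near-optimal decomposition and so delivers $\EMVar(\eta)<\MVar(f)$ without that attainment step.
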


	\begin{proof} We define the mollified extension 
		$
			E_h[f] =  \kappa_{h} * E[f],
		$
		with $E$ defined in (\ref{eq:definitionE+-}), $\kappa$ denoting the standard mollifier and $\kappa_h(\vx) = h^{-d} \kappa(h \vx)$.
		Choosing $h$ small enough, we have $\|E_h(f)-f\|_{L^p(H)} \leq \frac{\epsilon}{2}$ and $\MVar(E_h[f], H) \leq \MVar(f)$. Using Theorem~\ref{thm:UniBMVapprox} we derive the conclusion of the corollary.
	\end{proof}

	The restriction of needing at least 3 hidden layers might be surprising at first, given that the classical uniform approximation results are true for shallow neural networks. We provide an example for a function in $BMV$ which cannot be effectively approximated by shallow EVNNs in Appendix~\ref{App:approxProps}. In Appendix~\ref{App:sec:shockLimitation} we discuss a limitation of $BMV$ itself for finding local discontinuities.
	
	\subsection{Initialization}\label{sec:Initialization}
	In this section we discuss the weight and bias initialization procedure for networks with architectures of type (A) and (A'). We use a modified version of the Layer-sequential unit-variance (LSUV) initialization proposed in \cite{LSUVInit2015}. 
	The bias $\vb_l$ of layer $l$ was initialized with random numbers uniformly distributed in the interval $(-\sqrt{m_l}, \sqrt{m_l})$. Here $m_l$ denotes the width of layer $l$.
	
	Since the only orthonormal matrices with nonnegative weights are column permutations of the identity matrix, we did not initialize the weight matrices $\matA_l$ as orthonormal matrices. Instead, we just filled the weight matrices with the absolute values of Gaussian random numbers with mean 0 and variance 1. Then we propagated a batch of inputs from $(-1,1)^d$ layerwise through each subnetwork, rescaling the weights such that the output of each layer for the batch has variance $1$. The factor $1$ was found to be working well via a trial and error procedure. 
	Algorithm~\ref{initAlg} sums up the initialization procedure.
	
	\begin{algorithm}
		\caption{Initialization of a subnet.}
		\label{initAlg}
		\begin{algorithmic}
			\For{$l = 1,2,...,L+1$}
			\State Fill weight matrices $\matA_l \in \reals^{m_l\times m_{l-1}}$ with Gaussian random numbers with mean 0 and variance 1.
			\State Fill bias vectors $\vb_l \in \reals^{m_l}$ with uniform random numbers from the interval $(-\sqrt{m_l}, \sqrt{m_l})$.
			\EndFor
			\State Set $x=(\vx_i)_{i\in I}$ to be a batch of data from $(-1,1)^d$.\\
			(We chose the vertices of a regular tensor mesh.) 
			\For{l =$ 1,2,...,L+1$}
			\State $\vy_i \gets \matA_l \cdot \vx_i$ for $i\in I$
			\State $\text{var} \gets \frac{1}{|I|-1}\sum_{i\in I} \left( \vy_i^2 - \left(\frac{1}{|I|}\sum_{i\in I} \vy_i\right)^2 \right)$
			\State $\matA_l \gets \frac{\mathrm{LSUV Gain}}{
			 \sqrt{\text{var}}}\matA_l$ \qquad (The LSUV gain was set to 1.)
			\State $\vx_i \gets \sigma(\matA_l \cdot \vx_i + \vb_l)$
			\EndFor
		\end{algorithmic}
	\end{algorithm}
	
	\begin{remark}\label{rem:xavier}
	We have also experimented with a modification of the commonly used Xavier uniform initialization \cite[Section 4.2.1]{pmlr-v9-glorot10a} (also known as Glorot initialization, referred to as \textit{normalized initialization} in the original work). We have used the absolute values of the elements of the weight matrix after Xavier initialization and initialized the biases as described above. The results were generally of a similar quality. This is somewhat surprising given that the motivation of the Xavier initialization was based on each weight having mean zero and positive variance. For networks with architectures of type (A) or (A') both properties cannot be enforced simultaneously for the weight matrices.
	\end{remark}

\section{Application to scalar hyperbolic conservation laws}\label{sec:applications}
	In this section, we discuss the application of EVNNs to scalar hyperbolic conservation laws in one space dimension. Thus, we consider the following equation
	\begin{equation}\label{eq:conservationLaw}
		\partial_t u + \partial_x F(u) = 0
	\end{equation}
	Utilizing the additional regularity information provided by the estimated M-variation, we construct sequences of explicitly computable loss functions $(\Loss_n)_{n\in\naturals}$ such that any sequence of networks $(\eta_n)_{n\in\naturals}$ with architectures of type (A) or (A') satisfying 
	\begin{equation}\label{eq:convCon}
		\Loss_n(\eta_n) \xrightarrow{n\to\infty} 0
	\end{equation}
	converges to the appropriate solution.
	Further, we show that networks satisfying (\ref{eq:convCon}) exist if the solution is an element of $BMV$. Even stronger results are shown for ODEs in Appendix~\ref{App:sec:ApplicationToODEs}.
	
	Throughout this section, the flux function $F\in C^2(\reals)$ is assumed to be strictly convex and the initial data are given by $u_0=u(0,\cdot) \in L^\infty([0,L])$.
	We recall some classical results on scalar hyperbolic conservation laws in one space dimension. We restrict ourselves to periodic boundary conditions.
	It is well known that classical solutions of (\ref{eq:conservationLaw}) generally do not exist globally. On the other hand, weak solutions are generally not unique. Therefore, the accepted notion of solution is given by weak solutions satisfying an additional physically motivated admissibility criterion singling out a unique solution. We start by recalling the definition and classical existence and uniqueness result for admissible weak solutions.
	To this end, we define the set $PT$ of periodic test functions by
	\begin{equation*}
	PT \coloneqq \left\{\varphi\in C_c^\infty([0,T)\times[0,L]) \setsep D^\alpha\varphi(t,0)=D^\alpha\varphi(t,L) \text{ for all }\alpha\in \naturals_0^2, \,  t\in [0,T)  \right\}.
	\end{equation*}
	\begin{definition}[Admissible weak solution]
				\label{def:admissibleWeakSolution}
			Given initial data $u_0\in L^1([0,L])$ and $T>0$, we say $u\in L^1([0,T]\times[0,L])$ is a weak admissible solution of (\ref{eq:conservationLaw}) if $u$ satisfies
			\begin{equation}\label{perWeakEquation}
				\int_0^T \int_{[0,L]} u(t,x) \partial_t \varphi(t,x)+F(u)(t,x) \partial_x \varphi(t,x) \ddd x \ddd t
				= 
				-\int_{[0,L]} u_0(x) \varphi(0,x) \ddd x
			\end{equation}
			for all test functions $\varphi\in PT$ and
			if there exist
			locally Lipschitz continuous functions $S,Q\colon \reals \to \reals$, with $S$ strictly convex and $Q$ satisfying $Q' = F' S'$ almost everywhere,
			such that
			\begin{equation}\label{perWeakEntropy}
				\int_0^T \int_{[0,L]} S(u)(t,x) \partial_t \varphi(t,x)+Q(u)(t,x) \partial_x \varphi(t,x) \ddd x \ddd t
				\geq 
				-\int_{[0,L]} S(u_0)(x) \varphi(0,x) \ddd x
			\end{equation}
			for all nonnegative test functions $\varphi\in PT$, $\varphi\geq0$.
		\end{definition}
	\begin{theorem}[Existence and uniqueness]
		For each $u_0 \in L^\infty([0,L])$, there exists a unique admissible weak solution $u\in C([0,T];L^1([0,L]))\cap L^\infty([0,T]\times [0,L])$ of (\ref{eq:conservationLaw}).
	\end{theorem}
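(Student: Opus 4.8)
The statement packages two classical facts: existence of an entropy solution and its uniqueness. Note that the admissibility criterion in Definition~\ref{def:admissibleWeakSolution} asks only for a \emph{single} strictly convex entropy pair $(S,Q)$, so the uniqueness part is not literally Kruzhkov's theorem and is where the argument needs care. The plan is: (i) construct a solution by vanishing viscosity; (ii) upgrade its regularity to $C([0,T];L^1([0,L]))$ by $L^1$-stability together with approximation of the initial data; and (iii) obtain uniqueness by showing that the single-entropy condition, combined with the strict convexity of $F$ assumed throughout this section, already forces the full Kruzhkov family of entropy inequalities, after which the standard doubling-of-variables estimate applies.

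For existence I would regularize (\ref{eq:conservationLaw}) to $\partial_t u^\epsilon + \partial_x F(u^\epsilon) = \epsilon\,\partial_{xx}u^\epsilon$ with periodic boundary conditions and smooth periodic data $u_0^\epsilon \to u_0$ in $L^1$ with $\|u_0^\epsilon\|_\infty\le\|u_0\|_\infty$. Parabolic theory gives a smooth solution, and the maximum principle yields $\|u^\epsilon(t)\|_\infty\le\|u_0\|_\infty$ uniformly in $\epsilon,t$ (this already delivers the $L^\infty([0,T]\times[0,L])$ membership in the limit). Assuming first $u_0\in BV$, differentiating the viscous equation in $x$ gives a uniform-in-$\epsilon$ bound on $\Var(u^\epsilon(t);[0,L])$; Helly's theorem extracts a subsequence converging a.e.\ and in $L^1$ to some $u$. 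Multiplying the viscous equation by $S'(u^\epsilon)$, discarding the non-positive term $-\epsilon S''(u^\epsilon)|\partial_x u^\epsilon|^2$ by convexity of $S$, and passing to the limit yields (\ref{perWeakEntropy}); passing to the limit directly in the viscous equation yields (\ref{perWeakEquation}). For general $u_0\in L^\infty$ one approximates by $u_0^n\in BV$; the $L^1$-contraction $\|u^n(t)-u^m(t)\|_{L^1}\le\|u_0^n-u_0^m\|_{L^1}$ for the corresponding solutions (a by-product of the doubling estimate of step (iii)) shows $(u^n)$ is Cauchy in $C([0,T];L^1([0,L]))$, and the limit is the desired solution, with the $C([0,T];L^1)$ regularity inherited from the uniform convergence.

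For uniqueness, let $u,v$ be two admissible weak solutions with the same data. The crucial step is that an $L^\infty$ weak solution satisfying (\ref{perWeakEntropy}) for \emph{one} strictly convex pair $(S,Q)$ and with $F$ strictly convex automatically satisfies the Oleinik one-sided bound, hence the Kruzhkov inequalities $\partial_t|u-c| + \partial_x\big[\sgn(u-c)(F(u)-F(c))\big]\le 0$ in $\mathcal{D}'$ for every $c\in\reals$; this is precisely the single-entropy uniqueness phenomenon (Panov; De Lellis--Otto--Westdickenberg), which uses that genuine nonlinearity of $F$ confines jump discontinuities to admissible shocks. Granting this, Kruzhkov's doubling-of-variables computation applied to $u$ and $v$ shows $t\mapsto\|u(t)-v(t)\|_{L^1([0,L])}$ is non-increasing, and since the $C([0,T];L^1)$ regularity makes the trace at $t=0$ unambiguous and equal to $u_0$ for both, we conclude $u\equiv v$. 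I expect the reduction ``single strictly convex entropy $\Rightarrow$ all Kruzhkov entropies'' to be the genuine obstacle: it is false for general fluxes and here depends essentially on the strict convexity of $F$.
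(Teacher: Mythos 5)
Your proposal takes essentially the same route as the paper: existence is obtained from the classical construction (the paper cites Dafermos, which is itself a vanishing-viscosity argument, so your expanded sketch is the same idea), and uniqueness is reduced to the single-strictly-convex-entropy results of Panov and De Lellis--Otto--Westdickenberg (the paper additionally cites Krupa). You correctly pinpoint the single-entropy admissibility as the only genuinely nontrivial point, which is exactly what the paper delegates to those references.
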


		\begin{proof}
		On the whole space,
		existence (and uniqueness) for admissible weak solutions which satisfy the entropy condition for all convex $\tilde S$ and their respective entropy flux $\tilde Q$ satisfying $\tilde{Q}' = F' \tilde{S}'$ almost everywhere can be found, e.g., in \cite[Theorem 6.2.2]{dafermos2005hyperbolic}. In particular there exists a weak solution satisfying the entropy inequality for the entropy-entropy flux pair $(S,Q)$. For the proof of uniqueness we refer the reader to \cite{panov1994uniqueness,de2004minimal,krupa2019uniqueness}. Modifying these results to the setting with periodic boundary conditions is straightforward.
	\end{proof}

	\subsection{Convergence of minimizers}

	To solve (\ref{eq:conservationLaw}) for given initial data $u_0\in L^\infty([0,L])$, we use EVNNs with architectures of type (A) or (A') and a loss function based on the finite volume method. The latter is a popular numerical method to approximate solutions of hyperbolic conservation laws.
	More precisely, we use the following sequence of loss functions for a given neural network $\eta$:
	
	\begin{align}\label{eq:convLawLoss}
	\loss_n[\eta] &= 
	\loss^\text{pde}\Big(\eta;\, (t^{(n)}_k)_{k=0}^{K_n},\, (x^{(n)}_m)_{m=0}^{M_n}\Big)
	+
	\loss^\text{ent}\Big(\eta;\, (t^{(n)}_k)_{k=0}^{K_n},\, (x^{(n)}_m)_{m=0}^{M_n}\Big)
	\notag
	\\
	&
	+
	\loss^\text{IC}\Big(\eta;\, (x^{(n)}_m)_{m=0}^{M_n}\Big)
	+
	\loss^\text{BC}\Big(\eta;\, (t^{(n)}_k)_{k=0}^{K_n}\Big)
	+
	\Loss^\text{reg}(\eta),
	\end{align}
	with
	\begin{align*}
	&\loss^\text{pde}\Big(\eta;\, (t^{(n)}_k)_{k=0}^{K_n},\, (x^{(n)}_m)_{m=0}^{M_n}\Big)
	=
	\sum_{k=1}^{K_n} \sum_{m=1}^{M_n} 
	\left|
	\int_{(0,T)\times(0,L)}  \ind{(t^{(n)}_{k-1},t^{(n)}_k)\times(x^{(n)}_{m-1},x^{(n)}_m)}
	\ddd 
	\Big(
	\partial_t \eta + \partial_x F(\eta) 
	\Big)
	(t,x) 
	\right|,
	\notag
	\\
	&\loss^\text{ent}\Big(\eta;\, (t^{(n)}_k)_{k=0}^{K_n},\, (x^{(n)}_m)_{m=0}^{M_n}\Big)
	=\\
	&\qquad \sum_{k=1}^{K_n} \sum_{m=1}^{M_n}
	\max\left\{
	\int_{(0,T)\times(0,L)}  \ind{(t^{(n)}_{k-1},t_k^{(n)})\times(x^{(n)}_{m-1},x^{(n)}_m)}
	\ddd 
	\Big(
	\partial_t S(\eta) + \partial_x q(\eta) 
	\Big)
	(t,x) - \epsilon_n,\, 0
	\right\},
	\notag
	\\
	&\loss^\text{IC}\Big(\eta\Big)
	=
	\int_{[0,L]} \left| u_0 - \eta(0,x) \right| \ddd x,  \qquad 
	\loss^\text{BC}\Big(\eta\Big)
	=
\int_{[0,T]} \left| \eta(t,0)- \eta(t,L) \right| \ddd t ,
	\notag
	\\
	&\Loss^\text{reg}\Big(\eta\Big)
	=
	\max\Big\{ \EMVar(\eta; \;[0,T]\times[0,L])|-R,\,0  \Big\}.
	\end{align*}
	Here, $\epsilon_n \to 0$ for $n\to\infty$.
	The loss function in (\ref{eq:convLawLoss}) consists of penalty terms corresponding to the PDE, the entropy inequality, the initial data, the boundary condition and regularization, respectively.
	For each $n\in\naturals$, $(t^{(n)}_k)_{k=0}^{K_n}$ and $(x^{(n)}_m)_{m=0}^{M_n}$ denote the coordinates of the corners of the test volumes $[t^{(n)}_{k-1}, t^{(n)}_k]\times[x^{(n)}_{m-1}, x^{(n)}_{m}]$. They are assumed to satisfy
	\begin{itemize}
		\item
		$t^{(n)}_0 = 0$, $x^{(n)}_0 = 0$, $t^{(n)}_{K_n} = T$, $x^{(n)}_{M_n} = L$,
		\item 
		$t^{(n)}_k>t^{(n)}_{k-1}$ and $x^{(n)}_m>x^{(n)}_{m-1}$ for all $m=1,\dots,M_n$, $k=1,\dots,K_n$,
		\item 
		$\max\{t^{(n)}_k-t^{(n)}_{k-1}\setsep k=1,\dots, K_n\} \xrightarrow{n\to\infty}0$
		and
		$\max\{x^{(n)}_m-x^{(n)}_{m-1}\setsep m=1,\dots, M_n\} \xrightarrow{n\to\infty}0$.
	\end{itemize}
	From now on we will omit the superscript $n$ of $t_k$ and $x_m$ for notational convenience.
	The threshold $R$ is chosen independent of $n$ and determines how large the estimated M-variation of $\eta$ can be before the network is being penalized for it.
 	$R$ should be an upper bound on the M-variation of the solution $u$. Obviously, getting such a bound in practice is difficult and probably not every solution of a scalar hyperbolic conservation law has bounded M-variation.
 	
 	Note that the loss in (\ref{eq:convLawLoss}) is not fully discretized. The integrals in $\loss^\text{pde}$, $\loss^\text{ent}$, $\loss^\text{IC}$ and $\loss^\text{BC}$ generally cannot be computed exactly. We can use the integration scheme from Section~\ref{sec:IntegratingBMVFunctions} to fully discretize the loss function. We denote by $I_\mathcal{E}[F(f), U]$ the corresponding numerical approximation of the integral of the composition $F\circ f$ over $U$. Here, $f$ is a $BMV$ function, $F$ is locally Lipschitz and $\mathcal{E}$ denotes the error tolerance.
 	The resulting fully discretized loss function is given by
 	\begin{align*}
 		\Loss_n[\eta] &= 
 		\Loss^\text{pde}\Big(\eta;\, (t_k)_{k=1}^{K_n},\, (x_m)_{m=1}^{M_n}\Big)
 			+
 		\Loss^\text{ent}\Big(\eta;\, (t_k)_{k=1}^{K_n},\, (x_m)_{m=1}^{M_n}\Big)
 			\notag
 			\\
 			&
 			+
 		\Loss^\text{IC}\Big(\eta\Big)
 			+
 		\Loss^\text{BC}\Big(\eta\Big)
 			+
 		\Loss^\text{reg}\Big(\eta\Big),
 	\end{align*}
 	with
 	\begin{align*}
 		&\Loss^\text{pde}\Big(\eta;\, (t_k)_{k=1}^{K_n},\, (x_m)_{m=1}^{M_n}\Big)
 		\\
 			&\qquad=
 			\sum_{k=1}^{K_n} \sum_{m=1}^{M_n} 
 			\Bigg|
 			I_{\mathcal{E}_\text{int}(n)}\Big(\eta(t_{k},\cdot), \,[x_{m-1},{x_m}]\Big)
 			-
 			I_{\mathcal{E}_\text{int}(n)}\Big(\eta(t_{k-1},\cdot), \,[x_{m-1},{x_m}]\Big)
 			\\
 			&\qquad\phantom{=}
 			+
 			I_{\mathcal{E}_\text{int}(n)}
 			\Big(
 			F(\eta(\cdot,x_m)),\,
 			[t_{k-1},t_k]
 			\Big)
 			-
 			I_{\mathcal{E}_\text{int}(n)}
 			\Big(
 			F(\eta(\cdot,x_{m-1})),\,
 			[t_{k-1},t_k]
 			\Big)
 			\Bigg|\,,
 			\notag
 			\\
 		&\Loss^\text{ent}\Big(\eta;\, (t_k)_{k=1}^{K_n},\, (x_m)_{m=1}^{M_n}\Big)
 		\\
 		&\qquad=\max\Bigg\{
 		\sum_{k=1}^{K_n} \sum_{m=1}^{M_n} 
 		I_{\mathcal{E}_\text{int}(n)}\Big(S\big(\eta(t_k,\cdot)\big), \,[x_{m-1},{x_m}]\Big)
 		-
 		I_{\mathcal{E}_\text{int}(n)}\Big(S\big(\eta(t_{k-1},\cdot)\big), \,[x_{m-1},{x_m}]\Big)
 		\\
 		&\qquad\phantom{=}
 		+
 		I_{\mathcal{E}_\text{int}(n)}
 		\Big(
 		Q(\eta(\cdot,x_m)),\,
 		[t_{k-1},t_k]
 		\Big)
 		-
 		I_{\mathcal{E}_\text{int}(n)}
 		\Big(
 		Q(\eta(\cdot,x_{m-1})),\,
 		[t_{k-1},t_k]
 		\Big)
 		-\epsilon_n, 0\Bigg\}\,,
 		\notag
 			\\
 		&\Loss^\text{IC}\Big(\eta\Big)
 			=
 			I_{\mathcal{E}_\text{int}(n)}
 			\Big(
 			\big|\eta(0,\cdot) -  u_0\big|
 			,\,
 			[0,L]
 			\Big)
 			\,.
 			\notag
 			\\
 			&\Loss^\text{BC}\Big(\eta\Big)
 			=
 			I_{\mathcal{E}_\text{int}(n)}
 			\Big(
 			\big|F(\eta(\cdot,0)) -  F(\eta(\cdot,L))\big|
 			,\,
 			[0,T]
 			\Big)
 			\,.
 			\notag
 		\end{align*}
	Moreover, the error tolerance $\mathcal{E}_\text{int}(n)$ is assumed to satisfy 
	$\mathcal{E}_\text{int}(n) K_n M_n \to 0$ for $n\to\infty$.
	This ensures that $\Loss_n$ is a good approximation of $\loss_n$ for sufficiently large $n$.
	\begin{lemma}\label{lem:LossDiscretization}
		For any network $\eta$ with an architecture of type (A) or (A') and continuous activation function,
		\begin{equation*}
			\big|\loss_n(\eta)-\Loss_N(\eta)\big| \leq \mathcal{E}_\text{int}(n)\big(2 K_n M_n + 2\big).
		\end{equation*}
	\end{lemma}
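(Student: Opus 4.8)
The plan is to compare $\loss_n$ and $\Loss_n$ block by block. First, the regularisation term $\Loss^{\mathrm{reg}}(\eta)=\max\{\EMVar(\eta;[0,T]\times[0,L])-R,0\}$ appears literally unchanged in $\loss_n$ and in $\Loss_n$, so it cancels, and it remains to estimate $\big|\loss^X(\eta)-\Loss^X(\eta)\big|$ for $X\in\{\mathrm{pde},\mathrm{ent},\mathrm{IC},\mathrm{BC}\}$. Since the activation function is continuous, $\eta$ is continuous, and by its architecture of type (A) or (A') it lies in $BMV([0,T]\times[0,L])$; consequently every one--variable trace $\eta(t_k,\cdot)$, $\eta(\cdot,x_m)$ is a finite linear combination of bounded monotone functions, hence of bounded variation, and the same holds for $F(\eta(\cdot,x_m))$, $S(\eta(t_k,\cdot))$ and $Q(\eta(\cdot,x_m))$ because $F\in C^2$ and $S,Q$ are locally Lipschitz. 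Thus the quadrature scheme of Section~\ref{sec:IntegratingBMVFunctions} applies to each integrand appearing in $\Loss_n$, and by the defining accuracy of $I_{\mathcal{E}_\text{int}(n)}$ (Theorem~\ref{papageorgiou_integration_theorem}) every individual quadrature in $\Loss_n$ differs from the exact integral it approximates by at most $\mathcal{E}_\text{int}(n)$.

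The substantive step is the PDE block. Fix a test volume $R_{k,m}:=[t_{k-1},t_k]\times[x_{m-1},x_m]$. Because $\eta$ is continuous, its restriction to any line agrees with its trace, so Fubini's theorem and the fundamental theorem of calculus (equivalently, the Gauss--Green formula for the $BV$ field $(\eta,F(\eta))$ on $R_{k,m}$) give
\begin{equation*}
\int_{(0,T)\times(0,L)}\ind{(t_{k-1},t_k)\times(x_{m-1},x_m)}\,\ddd\big(\partial_t\eta+\partial_x F(\eta)\big)=\int_{x_{m-1}}^{x_m}\big(\eta(t_k,x)-\eta(t_{k-1},x)\big)\ddd x+\int_{t_{k-1}}^{t_k}\big(F(\eta(t,x_m))-F(\eta(t,x_{m-1}))\big)\ddd t,
\end{equation*}
which is exactly the object that the $(k,m)$--summand of $\Loss^{\mathrm{pde}}$ forms out of the quadratures by replacing each of these four edge integrals with its $I_{\mathcal{E}_\text{int}(n)}$--approximation. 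Applying $\big||a|-|b|\big|\le|a-b|$ inside each summand and the triangle inequality, $\big|\loss^{\mathrm{pde}}(\eta)-\Loss^{\mathrm{pde}}(\eta)\big|$ is bounded by the sum over all cells of the four edge--quadrature errors, each of which is at most $\mathcal{E}_\text{int}(n)$; since every edge of the test grid is evaluated by a single quadrature, a count of these evaluations bounds that sum by a constant multiple of $K_nM_n\,\mathcal{E}_\text{int}(n)$. The entropy block $\big|\loss^{\mathrm{ent}}(\eta)-\Loss^{\mathrm{ent}}(\eta)\big|$ is treated in exactly the same way with $(S(\eta),Q(\eta))$ replacing $(\eta,F(\eta))$; the shift by $\epsilon_n$ is irrelevant since it sits identically inside both $\max\{\cdot,0\}$ and $\big|\max\{a-\epsilon_n,0\}-\max\{b-\epsilon_n,0\}\big|\le|a-b|$.

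Finally, $\loss^{\mathrm{IC}}$ and $\loss^{\mathrm{BC}}$ are single one--dimensional integrals over $[0,L]$ resp.\ $[0,T]$, so each differs from its discretised counterpart in $\Loss_n$ by at most $\mathcal{E}_\text{int}(n)$, contributing $2\mathcal{E}_\text{int}(n)$; collecting the four blocks and the vanishing regularisation term gives the stated estimate $\big|\loss_n(\eta)-\Loss_n(\eta)\big|\le\mathcal{E}_\text{int}(n)(2K_nM_n+2)$, the coefficient $2K_nM_n$ reflecting the number of edge quadratures shared across the grid of test volumes. The only genuinely delicate points I foresee are (i) making the cellwise Gauss--Green identity precise for $\eta\in BMV$ rather than $C^1$ --- which comes down to using continuity of $\eta$ to render the open--versus--closed rectangle and the line--trace subtleties harmless --- and (ii) the bookkeeping of how many test volumes touch each edge, which is what turns the per--quadrature tolerance $\mathcal{E}_\text{int}(n)$ into the stated constant; everything else is just the definition of $I_{\mathcal{E}_\text{int}(n)}$ together with the reverse triangle inequality.
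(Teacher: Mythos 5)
Your proof follows the same route as the paper's: use continuity of $\eta$ (which follows from continuity of $\sigma$) to identify the $BV$ trace on each cell boundary with the pointwise restriction of $\eta$ and of its Lipschitz compositions $F(\eta)$, $S(\eta)$, $Q(\eta)$; apply the Gauss--Green theorem \cite[Theorem 3.87]{ambrosio2000functions} to rewrite each distributional cell integral as a signed sum of four edge integrals; and charge each edge quadrature its tolerance $\mathcal{E}_\text{int}(n)$. The paper's proof only names these ingredients, while you also record the cancellation of $\Loss^{\text{reg}}$ and the $1$-Lipschitz character of $\max\{\cdot-\epsilon_n,0\}$, both of which are needed and correct.

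One caveat concerns the final constant. Each of the $K_nM_n$ test volumes contributes four edge quadratures to $\Loss^{\text{pde}}$ and four more to $\Loss^{\text{ent}}$, so the straightforward triangle-inequality count gives $8K_nM_n+2$ rather than $2K_nM_n+2$. Your remark that ``every edge is evaluated by a single quadrature'' suggests counting distinct grid edges, but the absolute value inside each cell summand prevents the shared-edge errors from cancelling between adjacent cells, so the distinct-edge count of roughly $2K_nM_n$ does not by itself bound the sum of cellwise errors. Neither your argument nor the paper's terse proof actually verifies the constant, and this is immaterial for the use in Theorem~\ref{thm:convergence}, which only requires $\mathcal{E}_\text{int}(n)K_nM_n\to 0$; but if you want a rigorous constant, the honest bound from this decomposition is $8K_nM_n+2$.
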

	\begin{proof}
		Let $G\colon \reals \to \reals$ be locally Lipschitz continuous
		and denote by $T\colon BV([t_1,t_2]\times[x_1,x_2]) \to L^1(\partial[t_1,t_2]\times[x_1,x_2] )$ the trace on the cube $[t_1,t_2]\times[x_1,x_2]\subset [0,T]\times [0,L]$, cf.\ \cite[Theorem 3.87]{ambrosio2000functions}.
		The continuity of $\sigma$ implies continuity of $\eta$. Consequentially, we obtain
		\begin{align*}
			\int_{[x_1,x_2]} G(\eta(t_i,x)) \ddd x &= 	\int_{\{t_i\} \times [x_1,x_2]} T[G(\eta(t,x))] \ddd S(t,x), \qquad \text{for }i=1,2,\\
			\int_{[t_1,t_2]} G(\eta(t,x_i)) \ddd t &= 	\int_{[t_1,t_2]\times \{x_i\} } T[G(\eta(t,x))] \ddd S(t,x)\qquad \text{for }i=1,2.
		\end{align*}
		The statement of the lemma now follows from the divergence theorem \cite[Theorem 3.87]{ambrosio2000functions}.
	\end{proof}
	
	Assuming a suitable $R>0$ is identified, we can prove convergence in $L^1$ in the following sense.
	
	\begin{theorem}[Convergence]\label{thm:convergence}
		Let $u$ be an admissible weak solution of (\ref{eq:conservationLaw}) in the sense of Definition~\ref{def:admissibleWeakSolution} with initial data $u_0 \in L^\infty$.
		Let the sequence of networks $(\eta_n)_{n\in\naturals}$ with architectures of type (A) or (A') and continuous activation function $\sigma$ satisfy
		\begin{equation}\label{eq:convergenceCondition}
		\Loss_n[\eta_n] \xrightarrow{n\to\infty}0.
		\end{equation}
		Then $ \eta_n \to u$ in $L^1([0,T]\times [0,L])$.
	\end{theorem}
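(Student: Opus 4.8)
The strategy is a compactness-plus-consistency argument of Lax–Wendroff type, transferred to the neural-network setting. First I would reduce the fully discretized loss $\Loss_n$ to the non-discretized loss $\loss_n$ using Lemma~\ref{lem:LossDiscretization} together with the assumption $\mathcal{E}_\text{int}(n)K_nM_n\to 0$; this shows $\loss_n[\eta_n]\to 0$ as well, so it suffices to work with the five integral penalty terms in \eqref{eq:convLawLoss}. From $\Loss^\text{reg}(\eta_n)\to 0$ (in fact just boundedness suffices) we get $\EMVar(\eta_n;[0,T]\times[0,L])\le R+o(1)$, and since $\EMVar$ dominates $\MVar$ which, by Lemma~\ref{lem:UpperTVBound} (or the estimate \eqref{eq:basicW11Estimate}), controls $\Var(\eta_n;[0,T]\times[0,L])$, the sequence $(\eta_n)$ is bounded in $BV$. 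Combined with an $L^\infty$ bound — which I would extract from the initial-data term $\loss^\text{IC}$, the PDE term, and the $BV$ bound, arguing as in the standard maximum-principle estimate for conservation laws, or more directly from the $BV$ bound plus boundedness of $\eta_n(0,\cdot)$ near $u_0\in L^\infty$ — Helly's selection theorem gives a subsequence $\eta_{n_k}\to v$ in $L^1([0,T]\times[0,L])$ and a.e., with $v\in BV\cap L^\infty$.

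The second step is to identify the limit $v$ as \emph{an} admissible weak solution. For a fixed test function $\varphi\in PT$, I would compare $\int\!\!\int (v\,\partial_t\varphi + F(v)\,\partial_x\varphi)$ with a Riemann-type sum over the test volumes $[t_{k-1},t_k]\times[x_{m-1},x_m]$: replacing $\varphi$ by its average on each cell face and using the divergence theorem on each cell, the flux balance across cell boundaries is exactly what $\loss^\text{pde}$ measures (up to the modulus being outside the sum). Here the key point is that $\loss^\text{pde}$ controls $\sum_{k,m}\big|\text{cell flux balance}\big|$, so multiplying the $(k,m)$-th balance by the (bounded, and Lipschitz-regular) test-function value and summing gives $\big|\sum_{k,m}\varphi_{k,m}\cdot(\text{balance})\big|\le \|\varphi\|_\infty\,\loss^\text{pde}(\eta_{n_k})\to 0$. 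The difference between this weighted sum and the true integral $\int\!\!\int(\eta_{n_k}\partial_t\varphi+F(\eta_{n_k})\partial_x\varphi)$ is an $O(\max\text{mesh size})$ consistency error, controlled using the $BV$ bound on $\eta_{n_k}$, the smoothness of $\varphi$, the mesh-refinement hypotheses $\max(t_k-t_{k-1})\to0$, $\max(x_m-x_{m-1})\to0$, and local Lipschitz continuity of $F$ on the (uniformly bounded) range of $\eta_{n_k}$; the boundary term $\loss^\text{BC}$ absorbs the flux contributions on $x=0$ and $x=L$ using periodicity of $\varphi$, and $\loss^\text{IC}$ handles the $t=0$ term. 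Passing $k\to\infty$ (using $\eta_{n_k}\to v$ in $L^1$ and $F(\eta_{n_k})\to F(v)$ in $L^1$ by dominated convergence) yields \eqref{perWeakEquation} for $v$. The entropy inequality \eqref{perWeakEntropy} is obtained identically: $\loss^\text{ent}$ with the slack $\epsilon_n\to 0$ controls the positive part of each cell's entropy balance, so for $\varphi\in PT$ with $\varphi\ge 0$ the weighted sum has nonnegative limit, and the same consistency estimate applies to $S(\eta_{n_k})$, $Q(\eta_{n_k})$ (locally Lipschitz on the bounded range).

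The third step is to upgrade subsequential convergence to full convergence. By the uniqueness theorem for admissible weak solutions, every $L^1$-convergent subsequence has the same limit $u$; since the whole sequence $(\eta_n)$ is precompact in $L^1$ (the $BV+L^\infty$ bounds are uniform in $n$), a standard subsequence-of-subsequence argument gives $\eta_n\to u$ in $L^1([0,T]\times[0,L])$. \textbf{The main obstacle} I anticipate is the consistency estimate in step two: one must carefully match the cell-wise flux balances — which involve traces of $\eta_{n_k}$ on the cell faces, legitimized by the continuity of $\eta_{n_k}$ (continuous activation) and the trace theorem as in the proof of Lemma~\ref{lem:LossDiscretization} — against the smooth test function, and show that summation against $\varphi$ introduces only an error vanishing with the mesh, uniformly in $k$. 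This requires the $BV$ bound to be genuinely uniform in $n$ (hence the role of $\Loss^\text{reg}$ and $R$) and a careful bookkeeping of which penalty term controls which boundary piece; the interior and temporal/spatial boundary contributions must all be accounted for without double-counting. A secondary subtlety is ensuring the $L^\infty$ bound on $(\eta_n)$ is uniform, which — absent a discrete maximum principle — I would derive from the uniform $BV$ bound together with control of $\eta_n$ on $\{t=0\}$ via $\loss^\text{IC}$ and $u_0\in L^\infty$.
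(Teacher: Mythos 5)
Your proposal follows the same structure as the paper's proof: reduce to the continuous loss $\loss_n$ via Lemma~\ref{lem:LossDiscretization}; extract a uniform $BV$ bound from $\Loss^{\text{reg}}$ together with the estimate $\Var\lesssim\MVar\le\EMVar$ (Lemma~\ref{lem:UpperTVBound}); pass to an $L^1$-convergent subsequence by $BV$-compactness; identify the limit as the unique admissible weak solution using a Lax--Wendroff type consistency argument (the paper phrases this with the cell-average projection $\Pi_n[\varphi]$, which is the same idea as your ``replace $\varphi$ by its average on each cell face''); and conclude full-sequence convergence by uniqueness. The bookkeeping you anticipate as the main obstacle --- pairing cell flux balances controlled by $\loss^{\text{pde}}$, $\loss^{\text{ent}}$ against $\Pi_n[\varphi]$, and estimating $\varphi - \Pi_n[\varphi]$ against the finite measures $\partial_t\eta_n + \partial_x F(\eta_n)$ and $\partial_t S(\eta_n) + \partial_x Q(\eta_n)$ (uniformly bounded in total variation by the $BV$ bound and the local Lipschitz continuity of $F,S,Q$ on the bounded range of $\eta_n$) --- is precisely what the paper does.

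There is, however, one spot where your argument as stated does not go through. You propose to derive the uniform $L^\infty$ bound ``from the uniform $BV$ bound together with control of $\eta_n$ on $\{t=0\}$.'' In two space--time dimensions $BV$ does not embed into $L^\infty$, so a uniform bound on $\Var(\eta_n;[0,T]\times[0,L])$ plus $L^1$-closeness of $\eta_n(0,\cdot)$ to $u_0$ is not enough to bound $\sup|\eta_n|$. The paper instead uses the stronger oscillation estimate furnished directly by the EVNN architecture: for a network of type (A) or (A') one has
\[
|\eta_n(t,x)-\eta_n(s,y)| \le \EMVar\bigl(\eta_n;[0,T]\times[0,L]\bigr) \quad\text{for all } (t,x),(s,y),
\]
because each subnetwork is monotone and its total oscillation is exactly its contribution to $\EMVar$. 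Combined with the bound on $\int_{[0,L]}\eta_n(0,y)\ddd y$ coming from $\loss^{\text{IC}}$ and $u_0\in L^\infty$, this gives a pointwise bound on $|\eta_n(t,x)|$ uniform in $n$. So you should replace ``uniform $BV$ bound'' by ``uniform $\EMVar$ bound'' at this step: the fix is small, but the distinction matters, since the argument genuinely exploits the monotone-subnetwork structure of the EVNN rather than $BV$-regularity alone.
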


	\begin{proof}
		Due to the construction of the loss functionals and Lemma~\ref{lem:LossDiscretization}, (\ref{eq:convLawLoss}) together with (\ref{eq:convergenceCondition}) implies that 
		\begin{equation}\label{eq:aux_eq_in_convergence}
			\sup_{n\in\naturals} \|\eta_n\|_{BV((0,T)\times(0,L))} < \infty.
		\end{equation}
		Indeed, the uniform bound on $\Var(\eta_n, [0,T]\times[0,L])$ is a consequence of the boundedness of $\EMVar(\eta_n)$. As $|\eta_n(t,x)-\eta_n(s,y)|\leq \EMVar(\eta_n)$, the boundedness in $L^1$ follows from
		\begin{align*}
			L|\eta_n(t,x)| &\leq \Big|\int_{[0,L]} \eta_n(t,x)-\eta_n(0,y) \ddd y \Big| + 
			\Big|\int_{[0,L]} \eta_n(0,y) \ddd y \Big|
			\\
			&\leq L \EMVar(\eta_n)
			+ \Big|\int_{[0,L]} u_0(y)-\eta_n(0,y) \ddd y \Big|
			+ \Big|\int_{[0,L]} u_0(y)\ddd y  \Big|.
		\end{align*}
		
		We obtain the relative compactness of $(\eta_n)_{n\in\naturals}$ in $L^1((0,T)\times(0,L))$ due to (\ref{eq:aux_eq_in_convergence}), see \cite[Corollary 3.49]{ambrosio2000functions}. 
		It remains to show that the limit of any converging subsequence of $(\eta_n)_{n\in\naturals}$ is the unique admissible weak solution of the scalar hyperbolic conservation law.
		
		For $f\in L^1([0,T] \times [0,L])$ we define 
		the projection $\Pi_n$ into the space of piecewise constant functions for almost all $(t,x)$ by
		\begin{equation*}
			\Pi_n[f](t,x) \coloneqq \sum_{k=1}^{K_n} \sum_{m=1}^{M_n} \ind{(t_{k-1}, t_k)\times(x_{m-1}, x_m)}(t,x) 
			\int_{(t_{k-1}, t_k)\times(x_{m-1}, x_m)} f(s,y) \ddd (s,y).
		\end{equation*}
		Note that $\varphi-\Pi_n[\varphi] \to 0$ uniformly on the complement of a set with Hausdorff dimension 1 for every continuous function $\varphi \in C([0,T]\times[0,L])$. Due to the continuity of $\eta$, $D\eta$ vanishes on all sets with Hausdorff dimension 1.
		Using the divergence theorem, the residual of $\eta_n$ with respect to (\ref{perWeakEquation}) is given by
		\begin{align*}
		&\left| 
		\int_{(0,T)\times(0,L)} 
		\eta_n(\partial_t \varphi) + F(\eta_n)(\partial_x \varphi) 
		\ddd (t,x)
		-
		\int_{[0,L]}
		\varphi(0, x) \eta_n(0,x)
		\ddd x
		\right|
		\\
		&\quad\leq
		\left|
		\int_{(0,T)\times(0,L)} 
		\varphi
		\ddd 
		\Big(\partial_t \eta_n + \partial_x F(\eta_n)\Big)
		(t,x)
		\right|
		+
		\left|
		\int_{0}^{T}
		\varphi(t,0) \Big(F(\eta_n(t,0)) - F(\eta_n(t,L))\Big)
		\ddd t
		\right|
		\\
		&\quad
		\leq
		\left|
		\int_{(0,T)\times(0,L)} 
		(\varphi-\Pi_n[\varphi])
		\ddd 
		\Big(\partial_t \eta_n + \partial_x F(\eta_n)\Big)
		(t,x)
		\right|
		\\
		&\quad\phantom{=}
		+
		\left|
		\int_{0}^{T}
		\varphi(t,0) \Big(F(\eta_n(t,0)) - F(\eta_n(t,L))\Big)
		\ddd t
		\right|
		+
		\left|
		\int_{(0,T)\times(0,L)} 
		\Pi_n[\varphi]
		\ddd 
		\Big(\partial_t \eta_n + \partial_x F(\eta_n)\Big)
		(t,x)
		\right|.
		\end{align*}
		Similarly, we find for a positive test function $\varphi\geq 0$
		\begin{align*}
		&
		\int_{(0,T)\times(0,L)} 
		S(\eta_n)(\partial_t \varphi) + Q(\eta_n)(\partial_x \varphi) 
		\ddd (t,x)
		-
		\int_{[0,L]}
		\varphi(0, x) S(\eta_n)(0,x)
		\ddd x
		\\
		&\quad=
		-\int_{(0,T)\times(0,L)} 
		\varphi
		\ddd 
		\Big(\partial_t S(\eta_n) + \partial_x Q(\eta_n)\Big)
		(t,x)
		+
		\int_{0}^{T}
		\varphi(t,0) \Big(Q(\eta_n(t,0)) - Q(\eta_n(t,L))\Big)
		\ddd t
		\\
		&\quad
		=
		-\int_{(0,T)\times(0,L)} 
		(\varphi-\Pi_n[\varphi])
		\ddd 
		\Big(\partial_t S(\eta_n) + \partial_x Q(\eta_n)\Big)
		(t,x)
		\\
		&\quad \phantom{=}
		+
		\int_{0}^{T}
		\varphi(t,0) \Big(Q(\eta_n(t,0)) - Q(\eta_n(t,L))\Big)
		\ddd t
		-
		\int_{(0,T)\times(0,L)} 
		\Pi_n[\varphi]
		\ddd
		\Big(\partial_t S(\eta_n) + \partial_x Q(\eta_n)\Big) 
		(t,x).
		\end{align*} 
		As demonstrated, e.g., in the proof of \cite[Theorem 3.96]{ambrosio2000functions}, the composition $f\circ g$ of a Lipschitz continuous function $f$ and $BV$ function $g$ has bounded variation with
		$\Var(f\circ g) \leq \Lip(f) \Var(g)$.
		 Thus, as $S$, $Q$ and $F$ are Lipschitz continuous and $(\eta_n)_{n\in\naturals}$ is bounded in $BV((0,T)\times(0,L))$, both 
		$\partial_t \eta_n + \partial_x F(\eta_n)$
		and
		$\partial_t S(\eta_n) + \partial_x Q(\eta_n)$
		are finite signed Borel measures with uniformly bounded total variation in $n$.
		Using the obvious estimates implied by (\ref{eq:convergenceCondition}), we conclude the statement of the theorem completing the proof.
	\end{proof}

	\begin{remark}
		Similar arguments can be applied to systems of conservation laws in several dimensions in a straightforward way. Since for multidimensional systems of hyperbolic conservation laws weak solutions satisfying an entropy inequality are not unique in general, the resulting theorem only guarantees that each subsequence of $(\eta_n)_{n\in\mathbb{N}}$ contains a subsequence converging to a weak solution satisfying a weak entropy inequality under suitable assumptions. Note however, that we generally do not expect solutions of system of hyperbolic conservations laws to be in $BMV$. Therefore, approximation by networks with architectures of type (A) or (A') are expected to be either significantly overestimating the variation of the solution or to be of low accuracy.
	\end{remark}

	\subsection{Existence of a minimizing sequence}
	
	We continue by showing that sequences of neural networks satisfying the assumption of Theorem~\ref{thm:convergence} exist if the weak admissible solution of (\ref{eq:conservationLaw}) has bounded M-variation.
	\begin{theorem}[Existence of minimizing sequence]\label{thm:existencePDE}
		Let $u\colon [0,T]\times [0,L] \to \reals$ be a weak admissible solution of (\ref{eq:conservationLaw}) with periodic boundary conditions such that $u\in BMV([0,T]\times [0,L])$. Further, let $N_x,N_t\in\naturals$, $\Delta t \coloneqq T/N_t$, $h \coloneqq L/N_x$, $x_m \coloneqq mh$ and $t_k \coloneqq k \Delta t$.
		Then for any $\epsilon>0$, $L\in\naturals_{\geq 3}$ and any activation function $\sigma$ satisfying (\ref{eq:actiProperty}) there exists
		a network $\eta=\eta(\epsilon)$ with an architecture of type (A) with $L$ hidden layers and activation function $\sigma$ such that
		\begin{enumerate}[(i)]
			\item \label{BurgersExistence1}
			for all $k\in \{1,\dots, N_t\}$, $m\in \{1,\dots,N_x\}$
			\begin{equation*}
			\bigg|
			\int_{x_{m-1}}^{x_m} \eta(t_k, x)-\eta(t_{k-1}, x) \ddd x
			+
			\int_{t_{k-1}}^{t_k} F(\eta)(t, x_m)-F(\eta)(t, x_{m-1}) \ddd t
			\bigg| \leq \epsilon,
			\end{equation*}
			\item \label{BurgersExistence2}
			for all $k\in \{1,\dots, N_t\}$, $m\in \{1,\dots,N_x\}$
			\begin{equation*}
			\int_{x_{m-1}}^{x_m} S(\eta)(t_k, x)-S(\eta)(t_{k-1}, x) \ddd x
			+
			\int_{t_{k-1}}^{t_k} Q(\eta)(t, x_m)-Q(\eta)(t, x_{m-1}) \ddd t
			\leq \epsilon,
			\end{equation*}
		\item \label{BurgersExistence3}
		\begin{minipage}{\textwidth-28.1pt}
		\centering
		$
		\displaystyle
		\EMVar(\eta; [0,T]\times[0,L]) \leq 2 
		\left(
		\MVar(u; [0,T]\times[0,L])
		+
		\|u\|_{L^\infty([0,T]\times[0,L])}
		\right),
		$
		\end{minipage}
		\item \label{BurgersExistence4}
		\begin{minipage}{\textwidth-28.1pt}
		\centering
		$\displaystyle
			\|\eta(0,\cdot)-u(0,\cdot)\|_{L^1([0,L])} \leq \epsilon,
		$
		\end{minipage}
		\item \label{BurgersExistence5}
		\begin{minipage}{\textwidth-28.1pt}
			\centering
			$\displaystyle	
			\bigg| \int_{0}^{T} F(\eta(t,0))-F(\eta(t,L)) \ddd t \bigg| \leq \epsilon.
			$
		\end{minipage}
		\end{enumerate}
	\end{theorem}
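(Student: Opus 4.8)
The plan is to reduce the whole statement to a single application of the universal approximation result for continuous $BMV$ functions, Theorem~\ref{thm:UniBMVapprox}. First I would replace $u$ by a conveniently regularized function $\hat u$ that is continuous, lies in $BMV([0,T]\times[0,L])$ with $\MVar(\hat u;[0,T]\times[0,L])\le 2(\MVar(u)+\|u\|_\infty)$, and already satisfies (i)--(v) up to an error that vanishes with the regularization scale. Theorem~\ref{thm:UniBMVapprox} then produces, for every $L\ge 3$ and every $\sigma$ satisfying (\ref{eq:actiProperty}), a type-(A) network $\eta$ with $L$ hidden layers, $\|\hat u-\eta\|_\infty\le\delta$ and $\EMVar(\eta;[0,T]\times[0,L])<\MVar(\hat u;[0,T]\times[0,L])\le 2(\MVar(u)+\|u\|_\infty)$, which is precisely (iii). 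Since $F,S,Q$ are locally Lipschitz and all integrals in (i), (ii), (iv), (v) run over bounded intervals on which $\hat u$ and $\eta$ take values in a fixed bounded set, replacing $\hat u$ by $\eta$ perturbs each of these quantities by at most $C(L,T,F,S,Q)\,\delta$. Hence it suffices to establish (i), (ii), (iv), (v) for $\hat u$ with error $\le\epsilon/2$ and then choose $\delta$ small.

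To construct $\hat u$ I would first record that, because $u\in BMV([0,T]\times[0,L])$, every slice $u(t,\cdot)$ and in particular $u_0=u(0,\cdot)$ lies in $BV([0,L])$ with variation controlled by $\MVar(u)$, and that $u\in C([0,T];L^1([0,L]))$ by the classical regularity theory for admissible solutions. I would extend $u$ periodically in $x$ via Corollary~\ref{cor:periodic_extension} and constantly in $t$ beyond $[0,T]$, then mollify; exactly as in the proof of Corollary~\ref{cor:BMVApproxinL1}, mollifying a monotone decomposition does not increase the M-variation, so the resulting continuous $\hat u$ satisfies $\MVar(\hat u;[0,T]\times[0,L])\le 2(\MVar(u)+\|u\|_\infty)$. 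Moreover $\hat u\to u$ in $L^1$ and, using $u\in C([0,T];L^1)$ near $t=0$, $\hat u(0,\cdot)\to u_0$ in $L^1([0,L])$ as the mollification scale $\rho\to 0$, which gives (iv); and since $\hat u$ is periodic in $x$ in the trace sense, $\hat u(t,0)=\hat u(t,L)$ up to $O(\rho)$, so (v) follows from Lipschitz continuity of $F$.

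The heart of the argument is (i) and (ii). For a fixed cell $R_{k,m}=(t_{k-1},t_k)\times(x_{m-1},x_m)$, the divergence theorem for the (continuous) $BV$ function $\hat u$ rewrites the bracket in (i) as $\int_{R_{k,m}}\ddd(\partial_t\hat u+\partial_x F(\hat u))$ and the one in (ii) as $\int_{R_{k,m}}\ddd(\partial_t S(\hat u)+\partial_x Q(\hat u))$. Since $u$ is an admissible weak solution, $\partial_t u+\partial_x F(u)=0$ and $\partial_t S(u)+\partial_x Q(u)\le 0$ as Radon measures in the interior; using the chain rule for compositions of Lipschitz maps with $BV$ functions (Vol'pert) together with the usual mollification/Jensen estimates, the residual measures $\partial_t\hat u+\partial_x F(\hat u)$ and $\partial_t S(\hat u)+\partial_x Q(\hat u)$, tested against $\ind{R_{k,m}}$, converge as $\rho\to0$ to $0$ and to a nonpositive number, respectively, which yields (i) and (ii) for $\hat u$ with error $\le\epsilon/2$ (the slack $\epsilon_n$ in $\loss^{\text{ent}}$ is not even needed here). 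The main obstacle is that $\ind{R_{k,m}}$ is discontinuous, so this weak-$*$ convergence requires the residual measures not to charge the finitely many grid hyperplanes: charges on the horizontal lines $\{t=t_k\}$ are excluded by $u\in C([0,T];L^1)$, while a stationary shock sitting on a vertical line $\{x=x_m\}$ is handled by using near that line a one-sided mollifier that retains the value on one side of the jump, together with the Rankine--Hugoniot identity $F(u^-)=F(u^+)$ valid for stationary shocks of a strictly convex flux. Having fixed $\rho$ small enough that (i), (ii), (iv), (v) hold for $\hat u$ up to $\epsilon/2$, I would invoke Theorem~\ref{thm:UniBMVapprox} with the corresponding $\delta$ and conclude.
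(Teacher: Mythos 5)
Your proposal follows the same high-level strategy as the paper (periodic extension via Corollary~\ref{cor:periodic_extension}, mollification to a continuous $BMV$ function without increasing the M-variation, then an application of Theorem~\ref{thm:UniBMVapprox} and absorbing the remaining error through Lipschitz continuity of $F,S,Q$). However, the core technical step --- showing that the cell residuals in (i) and (ii) are small for the regularized function on the \emph{given} grid --- is handled by a genuinely different mechanism, and your mechanism has gaps.

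The paper does not try to make the fixed grid lines $\{x=x_m\}$ behave well; instead it \emph{shifts} the grid by a parameter $\gamma$. It tests the weak formulation and the entropy inequality against smooth functions $\varphi_{\alpha,\gamma,k,m}$ that converge, as $\alpha\searrow 0$, to indicators of the $\gamma$-shifted cells, and shows that for almost every $\gamma\in(0,L)$ (and for all $m,k$ simultaneously, plus the Lebesgue-point conditions needed for $L^1$-trace convergence of $G(\tilde u)(\cdot,x_m+\gamma)$) the boundary integrals converge and (\ref{eq:ExistenceEq2}) holds exactly. The selection of $\gamma$ is a pure measure-theoretic argument and requires no information about where shocks sit, no Rankine--Hugoniot algebra, and no $BV$ trace theory for $F(u),S(u),Q(u)$.

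Your proposal fixes the grid and tries to force the residual to vanish on each cell via a ``one-sided mollifier'' near vertical grid lines carrying stationary shocks, plus $F(u^-)=F(u^+)$ at such shocks. There are concrete problems. First, a mollifier that is one-sided only ``near'' each line $\{x=x_m\}$ must be glued back to a symmetric kernel away from the lines; this patching is not clearly compatible with the requirements that $\hat u$ remain continuous, lie in $BMV$, and have $\MVar(\hat u)\le\MVar(\tilde u)$, which you obtain by mollifying each monotone piece of the decomposition. A \emph{global} one-sided mollifier sidesteps the patching but then shifts the approximate trace at \emph{both} endpoints $x_{m-1},x_m$ of each cell to the same side; closing (i) then requires $F(u^-)=F(u^+)$ at the outer trace too, which holds only for stationary jumps, and this needs to be argued precisely. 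Second, for the entropy residual (ii) the identity $F(u^-)=F(u^+)$ is not the relevant one; one needs the sign information $Q(u^+)\le Q(u^-)$ at a stationary admissible shock (the entropy Rankine--Hugoniot inequality), and the resulting sign bookkeeping showing that the one-sided limit is still $\le\epsilon$ is left implicit in your proposal and is not covered by the mention of ``Jensen estimates'' (Jensen handles $S$ but not $Q$, which is neither convex nor concave). Third, your argument relies on trace theory for $F(u)$, $S(u)$, $Q(u)$ as $BV$ functions along the grid lines; while not incorrect, this is precisely the machinery the paper's shift argument is designed to avoid. In short, the idea could probably be made to work for strictly convex flux, but as written it glosses over the hard part, whereas the almost-every-$\gamma$ selection in the paper handles all the concentration issues at once with far less structure.
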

\begin{remark}
	In particular, if $u\in BMV$ and $R$ is chosen large enough, there exist a sequence of networks $\eta_n$ with $\Loss_n(\eta_n) \to 0$. 
\end{remark}

		\begin{proof}
			We fix a representation of $u$ and do not consider functions being equal almost everywhere to be identical. We denote by $\tilde u$ the following extension of $u$ to $\reals^2$,
			\begin{equation*}
				\tilde u(t,x) = u\big(\min\{\max\{t,T\},0\}, x-kL \big).
			\end{equation*}
			Here, $k=k(x)\in\integers$ is chosen such that either $x \in \big(kL, (k+1)L\big]$ and $k>0$,
			$x \in \big[0,L]$ and $k=0$, or
			$x \in \big[kL, (k+1)L\big)$ and $k<0$.
			Note that $\tilde u$ is a weak admissible solution of equation (\ref{eq:conservationLaw}) on $[0,T]\times \reals$. Moreover, Corollary~\ref{cor:periodic_extension} implies that $\tilde u \in BMV([0,T]\times [0,2 L])$ with
			$
				\MVar(\tilde u; [0,T]\times [0,2 L]) \leq  2 
				\left(
				\MVar(u; [0,T]\times[0,L])
				+
				\|u\|_{L^\infty([0,T]\times[0,L])}
				\right).
			$
			Thus, for any $\delta>0$ there exist a finite set $J\subset\naturals$ and monotone functions $u_{j,\delta}$ for $j\in J$ such that 
			\begin{align*}
				\tilde u(t,x) &= \sum_{j\in J} u_{j,\delta}(t,x), 
				\qquad \text{for all }(t,x)\in H,\\
				\MVar(\tilde u; H) + \delta &\geq \sum_{j\in J} \sup\{u_{j,\delta}(x) \setsep x\in H\} - \inf\{u_{j,\delta}(x) \setsep x\in H\},
			\end{align*}
			where $H = [0,T]\times [0,2 L]$. Using a mollification argument we derive a sequence $(v_n)_{n\in\naturals}\subset C([0,T]\times[0,2 L]) \cap BMV([0,T]\times[0,2L])$ such that $v_n \to \tilde u$ in $C([0,T];L^1([0,2 L]))$ and such that 
			\begin{equation*}
				\MVar(v_n; H) \leq \MVar(\tilde u; H)
				\qquad \text{and} \qquad
				\|v_n\|_\infty \leq \|\tilde u\|_\infty.
			\end{equation*}
			In particular, we derive for any $t\in [0,T]$ and any locally Lipschitz continuous function $G\colon \reals \to \reals$
			\begin{equation*}
				\int_{0}^{2L} \big|G(v_n)(t,x)-G(\tilde u)(t,x) \big|\ddd x \xrightarrow{n\to\infty} 
				0.
			\end{equation*}
			The convergence of $v_n$ to $\tilde u$ in $C([0,T]; L^1([0,2L]))$ implies convergence in $L^1(H)$. Thus, there exists a subsequence $(v_{n_l})_{l\in \naturals}$ such that for all 
			$m\in \{0,1,\dots, N_x\}$
			\begin{equation}\label{eq:ExistenceEq3}
				\int_{0}^{T} \Big| G(\tilde{u})(t, x_m + \gamma) - G(v_{n_l})(t, x_m + \gamma) \Big| \ddd t
				\xrightarrow{n\to\infty} 
				0,
			\end{equation}
			for any locally Lipschitz continuous function $G\colon \reals \to \reals$ and almost all $\gamma \in [0,L]$. 
			
			Next, we show that for almost all shifts $\gamma$ in $x$, the sum over the boundary integrals vanish for all test volumes. This is a consequence of $\tilde u$ being an admissible weak solution of (\ref{eq:conservationLaw}). Note that we do not work with the trace of $\tilde{u}$ as a function of bounded variation.
			
			Denote by $\psi \in C^\infty(\reals)$ a nondecreasing function with $\psi(x)=0$ for all $x\leq 0$ and $\psi(x)=1$ for all $x\geq 1$. We define for $\alpha>0$, $k\in \{1,\dots, N_t\}$ and $m\in \{1,\dots, N_x\}$
			\begin{equation*}
				\varphi_{\alpha,\gamma,k,m}(t,x) 
				\coloneqq
				\psi\left( \frac{x-x_{m-1}-\gamma}{\alpha} \right)
				\psi\left( \frac{x_{m}+\gamma-x}{\alpha} \right)
				\psi\left( \frac{t-t_{k-1}}{\alpha} \right)
				\psi\left( \frac{t_k-t}{\alpha} \right).
			\end{equation*}
		Clearly, for any $\alpha > 0$
		\begin{equation*}
			\ind{(a+\alpha,b-\alpha)}(x)
			\leq
			\psi\left( \frac{x-a}{\alpha} \right)\!
			\psi\left( \frac{b-x}{\alpha} \right) 
			\leq  
			\ind{(a,b)}(x).
		\end{equation*}
		Thus, since $\tilde u \in C([0,T]; L^1([0,2L]))$
		\begin{equation*}
			\int_{H} G(\tilde u)(t,x) \partial t \varphi_{\alpha,\gamma, k,m}(t,x) \ddd (t,x)
			\xrightarrow{\alpha \searrow 0} 
			\int_{x_{m-1}+\gamma}^{x_m+\gamma} G(\tilde u)(t_{k-1},x) - G(\tilde u)(t_k,x) \ddd x
		\end{equation*}
		for any locally Lipschitz continuous function $G$ and any $\gamma \in [0,L]$. Similarly, for any Lebesgue point $x\in [0,2L]$ of $\int_{t_{k-1}}^{t-k}\tilde u(t, x-h) - \tilde u(t, x) \ddd t$ and $m, \gamma$ with $x=x_m+\gamma$ we find
		\begin{equation}\label{eq:ExistenceEq1}
			\int_{H} G(\tilde u)(t,x) \partial x \varphi_{\alpha,\gamma, k,m}(t,x) \ddd (t,x)
			\xrightarrow{\alpha \searrow 0} 
			\int_{t_{k-1}}^{t-k} G(\tilde u)(t,x_{m-1}+\gamma) - G(\tilde u)(t,x_m+\gamma) \ddd t.
		\end{equation}
		Taking $\varphi_{\alpha,\gamma,k,m}$ as the test function in (\ref{perWeakEquation}) and (\ref{perWeakEntropy}), we conclude that for almost all $\gamma\in[0,L]$,
		\begin{align}\label{eq:ExistenceEq2}
		\notag\bigg|
		\int_{x_{m-1}+\gamma}^{x_m+\gamma} \tilde u(t_k, x)-\tilde u(t_{k-1}, x) \ddd x
		+
		\int_{t_{k-1}}^{t-k} F(\tilde u)(t, x_m+\gamma)-F(\tilde u)(t, x_{m-1}+\gamma) \ddd t
		\bigg| &= 0,\\
		\int_{x_{m-1}+\gamma}^{x_m+\gamma} S(\tilde u)(t_k, x)-S(\tilde u)(t_{k-1}, x) \ddd x
		+
		\int_{t_{k-1}}^{t-k} Q(\tilde u)(t, x_m+\gamma)-Q(\tilde u)(t, x_{m-1}+\gamma) \ddd t
		&\leq 0.
		\end{align}
		Note that $\tilde{u}(t,x_0+\gamma) = \tilde{u}(t,x_{M_{n}}+\gamma)$ for all $\gamma\in(0,L)$ and $t\in[0,T]$ by construction.
		Choosing $\gamma\in(0,L)$ such that
		(\ref{eq:ExistenceEq3}), (\ref{eq:ExistenceEq1}) and (\ref{eq:ExistenceEq2}) are satisfied for all $k=1,\dots,K_n$, $m=1,\dots,M_n$ and small enough such that
		$
			\|\tilde u(0,\cdot+\gamma) - \tilde u(0,\cdot) \| < \frac{\epsilon}{3}\,,
		$
		we can take $l$ large enough such that $(t,x) \mapsto v_{n_l}(t,x+\gamma)$ satisfies properties (\ref{BurgersExistence1})-(\ref{BurgersExistence5}) with $\epsilon$ replaced by $2\epsilon/3$. Invoking Theorem~\ref{thm:UniBMVapprox} completes the proof.
	\end{proof}

	\section{Numerical Experiments}\label{sec:num_experiments}
	
	We have implemented a basic version of EVNNs with architectures of type (A) or (A') in PyTorch \cite{pytorch} as a proof of concept. In this section we present numerical results of our implementation of the algorithm presented in Section~\ref{sec:applications} on several benchmark problems.
	
	We always used the $\tanh$ activation function and networks with architectures of type (A'). Note that these can be rewritten as networks with architectures of type (A), see Remark~\ref{rem:AvsAprimeForTanh}.
	To guide the choice of the hyperparameters for the numerical tests, we performed a rough hyperparameter study
	for an unrelated regression problem in 2D.
	Unless specified otherwise, we use the hyperparameters indicated in Table~\ref{tab:numExperiments_hyperparametersettings} for each subnetwork. We use networks with architecture (A') with two subnetworks. To simplify the parallelization of our implementation, integrals are possibly approximated with higher precision then indicated by the integration tolerance.

	\begin{table}
	\centering
	\begin{tabular}{|c|c|c|c|c|c|}
			\hline 
		 LSUV gain & depth	& width &activation & weight modifier  	& epochs \\
		\hline 
		 	1		& 3		& 50	&$\tanh$	& $[x\mapsto |x|]$	& 8192 \\
		\hline 
	\end{tabular} 
	\caption{Default hyperparameter settings.}
	\label{tab:numExperiments_hyperparametersettings}
\end{table}

	All experiments were repeated several times and mean error, maximum error, minimum error as well as the standard deviation of errors are reported as indicators of the stability with respect to network initialization.
	
	All computations were performed with double precision.

	\subsection{Burgers Equation}
	We apply EVNNs to the classical benchmark problems for the inviscid Burgers equation,
	\begin{equation*}
		\partial_t u + \partial_x \frac{u^2}{2} = 0,
	\end{equation*}
	which were considered in \cite{de2022wpinns, chaumet2023}. These are a standing shock, a moving shock, a rarefaction wave and a test case with sine (smooth) initial data for which the solution develops a shock within finite time. For comparability, we use the same computational domains as in \cite{de2022wpinns, chaumet2023}. The reference solution for the standing and moving shock and the rarefaction wave is the respective exact solution of the problem. For the sine data the reference solution was generated using the local Lax-Friedrich method with a resolution of ($4096 \times 4096$ cells).
	
	For all test cases we used the entropy-entropy flux pair $(S,Q)$ given by $S(u) = \frac{1}{2}u^2$ and $Q(u) = \frac{1}{3}u^3$.
	We used uniform Cartesian mesh for the test volumes with $t_k-t_{k-1} = 2^{-7} = x_m-x_{m-1}$ for all $k=1,\dots,K$, $m=1,\dots,M$. Instead of calculating integrals over $[t_{k-1},t_k] \times \{x_m\}$ or $\{t_k\}\times [x_{m-1},x_m]$, we computed the respective average over the set. The integration tolerance was set to $\mathcal{E}_\text{int}=5e-3$. This corresponds to an integration tolerance of $5/2^{7}/10^{3}\approx 4e-5$ for the integrals over faces of the grid.
	
	We used a modified version of the loss function to simplify the optimization problem. In particular, we employ causal training \cite{wang2022respecting} to incentivize the network to learn earlier values before later ones. More precisely, we change $\Loss^\text{pde}$ to
	\begin{align*}
		&\Loss^\text{pde}\Big(\eta;\, (t_k)_{k=1}^{K_n},\, (x_m)_{m=1}^{M_n}\Big)
		\\
		&\qquad=
		\sum_{k=1}^{K_n} w_k \sum_{m=1}^{M_n} 
		\Bigg|
		I_{\mathcal{E}_\text{int}(n)}\Big(\eta_n(t_k,\cdot), \,[x_{m-1},{x_m}]\Big)
		-
		I_{\mathcal{E}_\text{int}(n)}\Big(\eta_n(t_{k-1},\cdot), \,[x_{m-1},{x_m}]\Big)
		\\
		&\qquad\phantom{=}
		+
		I_{\mathcal{E}_\text{int}(n)}
		\Big(
		F(\eta_n(\cdot,x_m)),\,
		[t_{k-1},t_k]
		\Big)
		-
		I_{\mathcal{E}_\text{int}(n)}
		\Big(
		F(\eta_n(\cdot,x_{m-1})),\,
		[t_{k-1},t_k]
		\Big)
		\Bigg|\,,
		\notag
	\end{align*}
	with $w_1=1$ and for $k=2,\dots, K_n$
	\begin{align*}
		w_k =&
		\exp\Bigg(-
		\zeta_{k} \sum_{l=1}^{k-1} \sum_{m=1}^{M_n} 
		\Bigg|
		I_{\mathcal{E}_\text{int}(n)}\Big(\eta_n(t_k,\cdot), \,[x_{m-1},{x_m}]\Big)
		-
		I_{\mathcal{E}_\text{int}(n)}\Big(\eta_n(t_{k-1},\cdot), \,[x_{m-1},{x_m}]\Big)
		\\
		&\qquad\phantom{=}
		+
		I_{\mathcal{E}_\text{int}(n)}
		\Big(
		F(\eta_n(\cdot,x_m)),\,
		[t_{l-1},t_l]
		\Big)
		-
		I_{\mathcal{E}_\text{int}(n)}
		\Big(
		F(\eta_n(\cdot,x_{m-1})),\,
		[t_{l-1},t_l]
		\Big)
		\Bigg|\Bigg)\,.
		\notag
	\end{align*}
Here, we chose
\begin{align}\label{eq:causalTrainigEps}
	\zeta_{k} &= \left( \frac{L}{M_n}\sum_{m=1}^{M_n}\left| I_{\mathcal{E}_\text{int}(n)}\Big(\eta_n(t_{k-1},\cdot), \,[x_{m-1},{x_m}]\Big)
	\right|+\left|
	I_{\mathcal{E}_\text{int}(n)}\Big(\eta_n(t_{k},\cdot), \,[x_{m-1},{x_m}]\Big)
	\right| \right)^{-1} 
\notag
\\&
\approx
\frac{1}{t_k-t_{k-1}}\int_{[t_{k-1}, t_k]\times[-L,L]} |\eta_n(t,x)| \ddd (t,x).
\end{align}

	The same modification was applied to the entropy inequality. In this case, $\veta$ is replaced by $S(\veta)$ in (\ref{eq:causalTrainigEps}).
	
	Moreover, as in \cite{cvPINNs} the initial data on the temporal boundary and fluxes on the spatial boundary
	${0}\times[-L,L]$ and $\left(\{-L\}\cup\{L\}\right)\times [0,T]$
	are calculated with the respective prescribed function instead of with $\eta$, e.g.\ we replace
	\begin{equation*}
		\small
		I_{\mathcal{E}_\text{int}(n)}\Big(\eta(0,\cdot), \,[x_{m-1},{x_m}]\Big) 
		\text{ by }
		\int_{[x_{m-1},{x_m}]}\! u_0(x) \ddd x
		 \text{ and }
		I_{\mathcal{E}_\text{int}(n)}
		\Big(
		F(\eta(\cdot,L)),\,
		[t_{k-1},t_k]
		\Big)
		\text{ by }
		\int_{[t_{k-1},{t_k}]} \!u(t,L) \ddd x.
	\end{equation*}
	
	This is done both in $\Loss^\text{pde}$ and $\Loss^\text{ent}$. For periodic boundary conditions the fluxes on $\{-L\}\times [0,T]$ and $\{L\}\times [0,T]$ are replaced by the arithmetic mean of both.
	These changes ensure that the loss on boundary cells is meaningful even if the boundary condition is not learned yet. 
	Our experiment regarding ODEs in Appendix~\ref{App:sec:numExperiments_ODE} show that the network can learn more accurate solutions than the chosen (mesh) hyperparameter suggests.
	Therefore we resolve initial and boundary terms to higher accuracy than the uniform Cartesian grid allows by including $\Loss^\text{IC}$ and $\Loss^\text{BC}$.
	
	We weighted $\Loss^\text{Ic}$ and $\Loss^\text{BC}$ by a factor of 100. This corresponds roughly to the ratio of numbers of cells to the number of cells on the boundary. The entropy loss was weighted by a factor of 0.1 as the entropy condition is only enforced to single out non-physical solutions.
	
	We use the popular Adam optimizer \cite{kingma2017adam} with a learning rate of 0.01. We do not employ weight decay as we already penalize the variation of the network for regularization. The history of the Adam optimizer is reset every 1000 epochs to avoid local minima.
	
	It should be noted, that the experiments in Section~\ref{sec:NumExperiments_shocks} and \ref{sec:NumExperiments_rarefaction} were performed with Dirichlet boundary conditions. Here, we prescribed values on the whole spatial boundary corresponding to reference solution of the respective Riemann problem. The numerical results indicate that the algorithm is not limited to periodic boundary conditions.
	
	Each experiment was repeated 16 times to investigate the influence of the initialization of the network parameters. The hyperparameter $R$ was chosen to be twice the expected $M$-variation of the admissible weak solution for all test cases. The parameter $\epsilon_n$ in the entropy loss was set to 0.01 for all experiments. Table~\ref{tab:BurgersErrors} contains the average error, standard deviation of the errors, maximal error and minimal error over all network initializations. Additional, we computed the error of the average over all networks. This last is the error that was given in \cite{de2022wpinns, chaumet2023}.
	
	\begin{table}
		\centering
		\begin{tabular}{|c|c|c|c|c|c|c|}
			\hline       experiment& note&  mean error & error std. & max error & min error & av. error
			\\
			\hline
			\ref{sec:NumExperiments_shocks} & standing shock &2.41e-03&1.04e-03&5.19e-03&4.04e-04&1.64e-03
			\\ \hline
			\ref{sec:NumExperiments_shocks} & moving shock &8.66e-03&2.37e-02&1.00e-01&7.98e-04&7.23e-03
			\\ \hline
			
			\ref{sec:NumExperiments_rarefaction} &
			&9.24e-03&2.94e-03&1.43e-02&5.51e-03&3.04e-03
			\\ \hline
			
			\ref{sec:NumExperiments_Sine}& periodic BC
			&6.04e-02&2.76e-02&1.32e-01&1.66e-02&4.81e-02
			\\ \hline
			
			\ref{sec:NumExperiments_Sine}& Dirichlet BC
			&3.99e-02&2.56e-02&9.21e-02&1.16e-02&3.01e-02
			\\ \hline
		\end{tabular}
		\caption{Errors for the test cases for the Burgers equation.
		All errors are with respect to the relative $L^1$ norm. We give the mean, standard derivation, maximum and minimum of the errors with respect to the 16 independently initialized training runs. The last column contains the error of the function defined as the average of all network runs for the specified test case. This is the error reported by \cite{chaumet2023, de2022wpinns} after different numbers of epochs.}
		\label{tab:BurgersErrors}
	\end{table}
	
	\subsubsection{Standing and moving shock}\label{sec:NumExperiments_shocks}
	For the first two experiments, we set $T=0.5$, $L=1$. We first consider the initial data $u(0,x) = 1$ for $x<0$ and $u(0,x)=-1$ otherwise, leading to a standing shock at $x=0$. Next, for the moving shock, the initial data are given by $u(0,x) = 1$ for $x<0$ and $u(0,x)=0$ otherwise. The exact solution consists of a shock at $x=0$ which moves with speed $1/2$ to the right. In both cases Dirichlet boundary conditions were enforced at $x=-1$ and $x=1$.
	We set $R=4$ for the moving shock and $R=2$ for the standing shock.
	
	Numerical results for the first random network initialization of 16 are presented in Appendix~\ref{App:sec:AdditionalNumericalBurgersRersults}. For the standing shock, the results are very similar for all initializations, see Table~\ref{tab:BurgersErrors}. For the moving shock case, one network initialization was not able to learn the correct shock speed. In fact, the network initialization reproduced the standing shock. This is the reason for the increased errors for this test case.

\subsubsection{Rarefaction wave}\label{sec:NumExperiments_rarefaction}
We considered the initial data given by
$u_0(x)=-1$ for $x<0$ and $u_0(x)=1$ otherwise.
The weak admissible solution $u$ is given by
\begin{equation*}
	u(t,x) =
	\begin{cases}
		-1 \quad &\text{if }x<-t,\\
		x/t \quad &\text{if } x \in [-t,t],\\
		1  \quad &\text{if }x>t .
	\end{cases}
\end{equation*}
We set $R=4$ for all training runs.
This test case illustrates the importance of the additional entropy admissibility condition. It is easy to verify that the constant extension of the initial data in time yields a non-physical weak solution with the same values on the boundary, which does not satisfy the entropy inequality. As we also enforce the entropy inequality during training, the network picks up the right solution for all runs.

\subsubsection{Sine test case}\label{sec:NumExperiments_Sine}
	
	For the last test case we considered the initial data given by $u_0(x)=-\sin(\pi\,x)$ with periodic boundary conditions. The admissible weak solution develops a shock within the computational domain. We compared the results of enforcing periodic boundary conditions and of enforcing homogeneous Dirichlet boundary conditions, see Table~\ref{tab:BurgersErrors}. We set $R=8$ for all training runs for this test case. 
	Similar to \cite{de2022wpinns, chaumet2023}, we used more epochs for the sine test case. Instead of 8192 we used 16384 epochs. We present the results for the first network initialization with periodic boundary conditions in Figure~\ref{fig:2DSineTestCase}. For some of the random initializations, the shock position is shifted resulting in a larger error compared to the different test cases, see Table~\ref{tab:BurgersErrors}. In Appendix~\ref{App:sec:AdditionalNumericalBurgersRersults} the standard deviation of the networks is plotted to illustrate the issue, see Figure~\ref{App:fig:SineInitialDataStd}.

\begin{figure}
	\begin{subfigure}[b]{0.49\textwidth}
		\centering
		\includegraphics[width=0.8\linewidth]{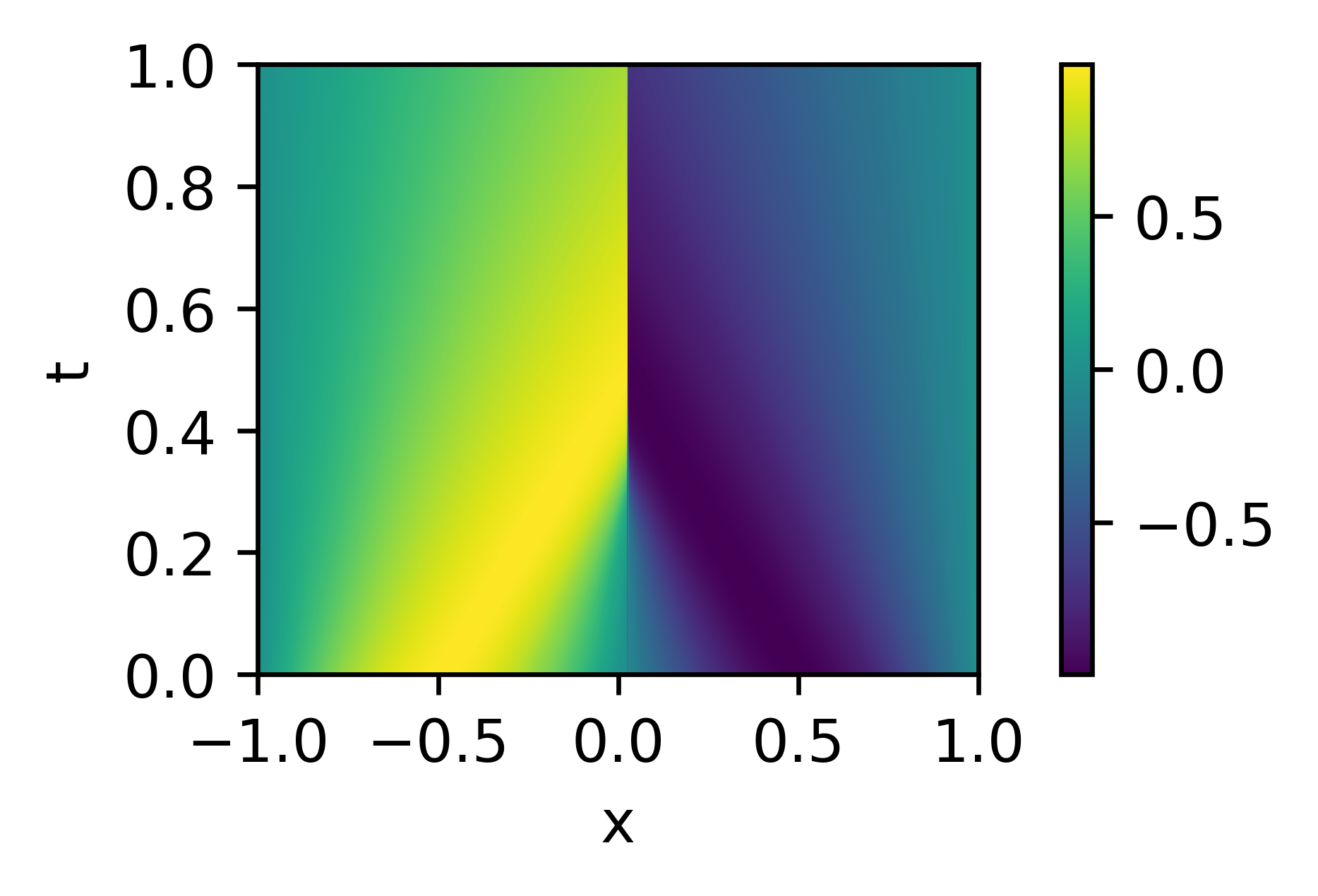}
		\caption{Network solution.}
		\label{fig:2DSineTestCase_net}
	\end{subfigure}
	\begin{subfigure}[b]{0.49\textwidth}
		\centering
		\includegraphics[width=0.8\linewidth]{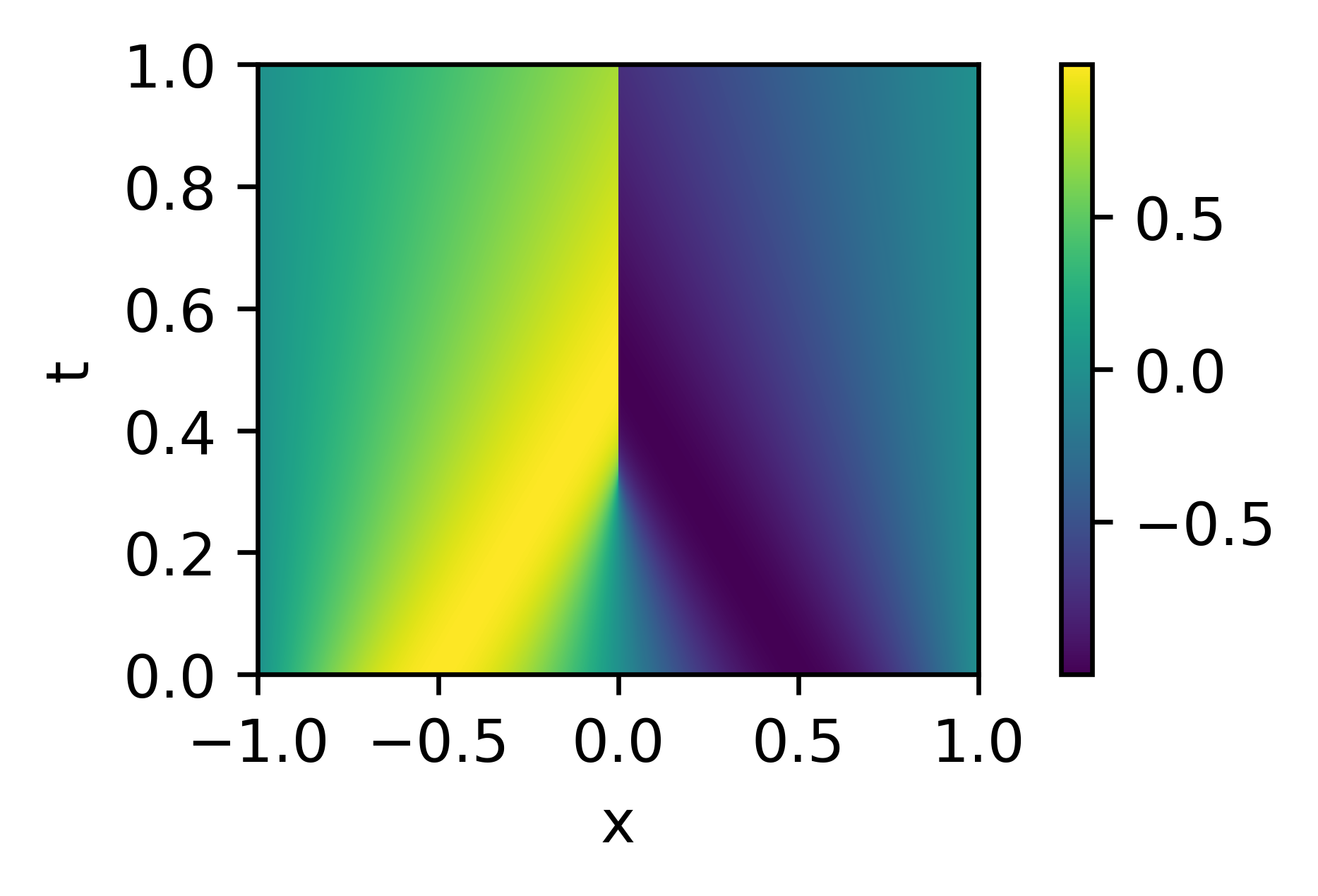}
		\caption{Reference solution (finite volume method).}
		\label{fig:2DSineTestCase_ref}
	\end{subfigure}
	
	\begin{subfigure}[b]{0.49\textwidth}
		\centering
		\includegraphics[width=0.8\linewidth]{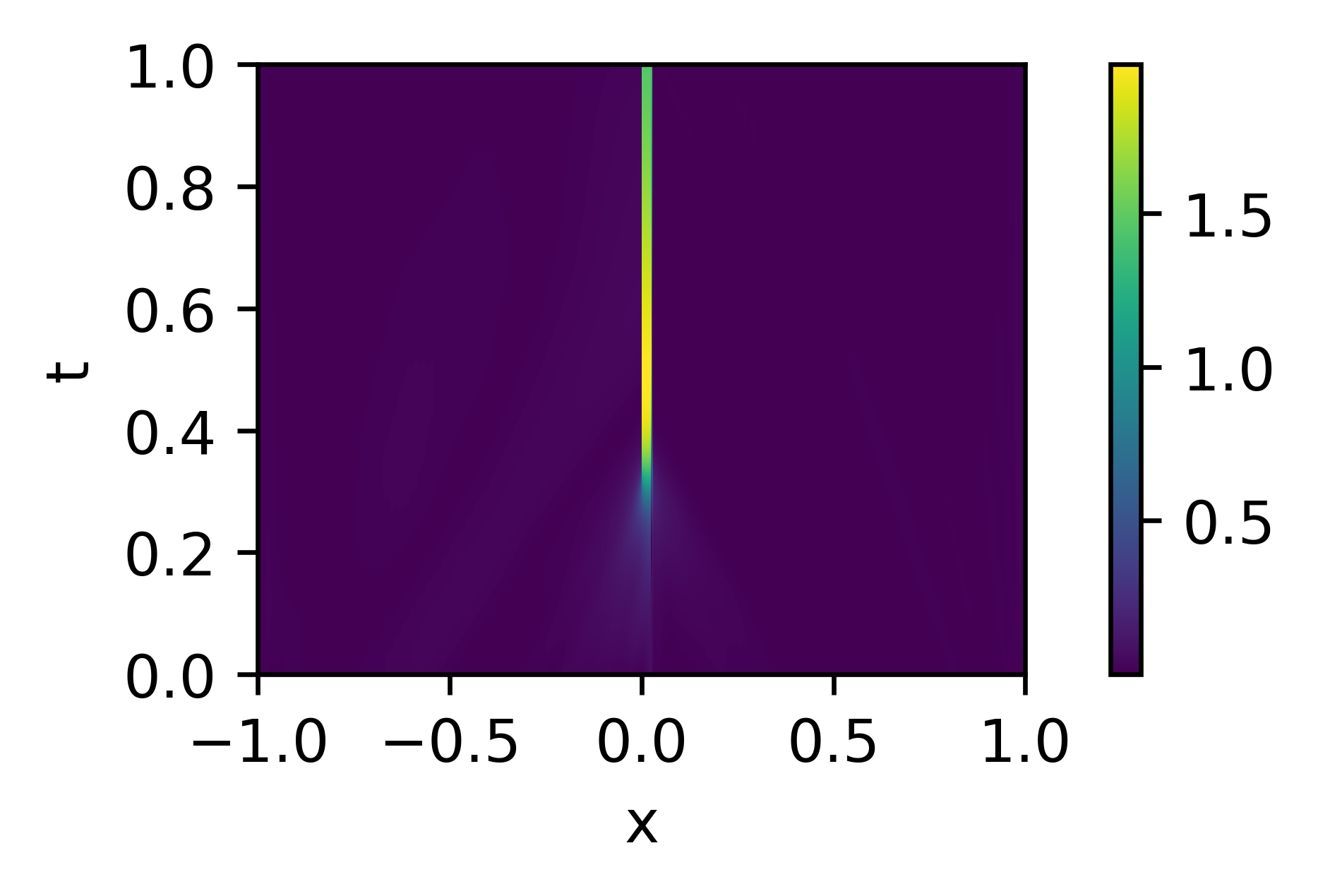}
		\caption{Error.}
		\label{fig:2DSineTestCase_err}
	\end{subfigure}
	\begin{subfigure}[b]{0.49\textwidth}
		\centering
		\includegraphics[width=0.8\linewidth]{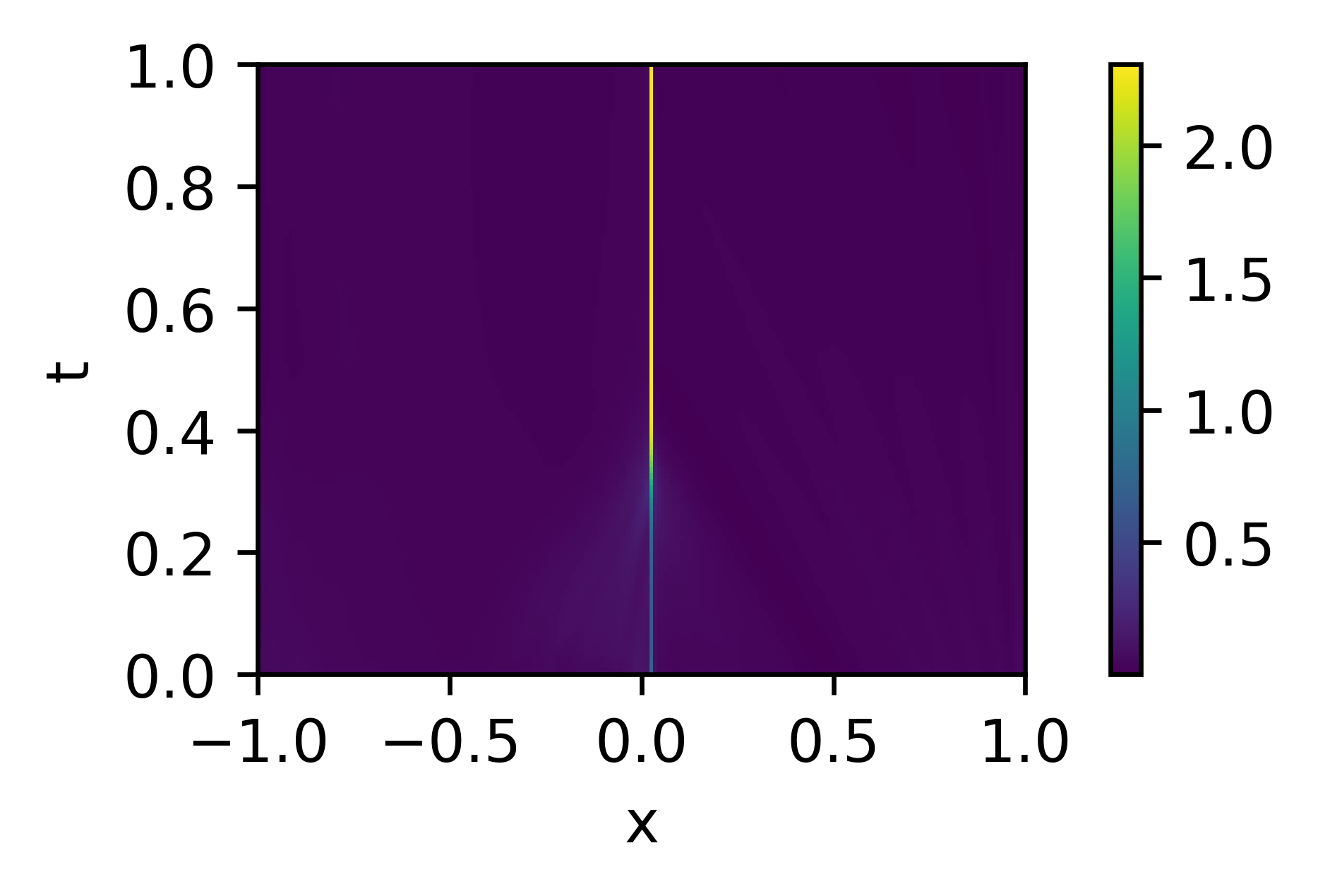}
		\caption{Estimated M-variation on a $100\times100$ grid.}
		\label{fig:2DSineTestCase_var}
	\end{subfigure}
\caption{The first network initialization trained for the sine initial data test case with periodic boundary condition.}
\label{fig:2DSineTestCase}
\end{figure}

	\section*{Acknowledgments}
{M\' aria Luk\' a\v cov\' a - Medvi{\leavevmode\setbox1=\hbox{d}%
		\hbox to 1.05\wd1{d\kern-0.3ex{\char039}\hss}}ov\' a} has been funded by the Deutsche Forschungsgemeinschaft (DFG, German Research Foundation) - 
Project number 233630050 - TRR 146 as well as by 
SPP 2410 Hyperbolic balance laws: complexity, scales and randomness project number 525853336. She is grateful to the Gutenberg Research College for supporting her research. The research of Simon Schneider was funded by Mainz Institute of Multiscale Modeling M3odel and the Gutenberg Research College and by the Deutsche Forschungsgemeinschaft (DFG, German Research Foundation) - Project number 257899354 TRR 165 Waves to Weather.  
\bibliography{lit.bib}
\newpage
\appendix

\section{Extended discussion of the approximation properties of EVNNs}\label{App:approxProps}

This section is an extended version of Section~\ref{approxProps}.
We prove several results concerning approximation by networks with architectures of type (A) or type (A'). We start by showing  that networks with architectures of type (A) and networks with architectures of type (A') can essentially approximate the same functions, even for activation functions which do not satisfy \begin{equation}\label{App:eq:generalized_uneven_property}
	a\sigma(-bx-c)=-\sigma(x) \qquad \text{for all }x\in\reals.
\end{equation}
Throughout this section we require the activation to satisfy
\begin{equation}\label{App:eq:actiProperty}
	\lim_{x\to-\infty}\sigma(x) = \sigma_{-}, \qquad \lim_{x\to \infty}\sigma(x) = \sigma_{+},
\end{equation}
for some constants $\sigma_{-}<\sigma_{+}$.

\begin{lemma}\label{App:lem:typeAvsAprime}
	Let $\sigma$ be continuous, nondecreasing and satisfy (\ref{App:eq:actiProperty}) and $U\subset \reals^d$ be compact. For each $\epsilon>0$, all $L,\,m_1,\dots,m_L\in\naturals$, $m_0=d$, $m_{L+1}=N$, all matrices $\matA_1,\dots, \matA_{L+1}$, $\matA_i\in \reals_{\geq 0}^{m_i\times m_{i-1}}$ and all $\vb_1,\dots, \vb_{L+1}$, $\vb_i\in \reals^{m_i}$, there exist $P \in \naturals$, matrices $\tilde \matA_1, \dots, \tilde \matA_{L+1}$, $\tilde\matA_i  \in\reals_{\geq 0}^{P m_i\times P m_{i-1}}$ for $2\leq i\leq L$, $\tilde\matA_1  \in\reals_{\geq 0}^{P m_1\times m_0}$, $\tilde\matA_{L+1}  \in\reals_{\geq 0}^{m_{L+1}\times P m_{iL}}$ and $\tilde \vb_1, \dots, \tilde \vb_{L}$ with $\tilde \vb_i\in \reals^{P m_i}$, $\vb_{L+1}\in \reals^N$, such that for all $\vx\in U$, $i\in\{1,\dots,N\}$,
	\begin{align}\label{App:eq:lemArchitectureComparison}
		\Bigg|\bigg(
		&\matA_{L+1} \cdot \hat{\sigma} \left( \dots 
		\matA_2 \cdot \hat{\sigma} \left(
		\matA_1 \cdot \vx + \vb_1
		\right)+\vb_2 \dots \right) + \vb_{L+1}
		\bigg)_i
		\notag
		\\
		-&
		\left(
		-\tilde \matA_{L+1} \cdot \hat{\sigma} \left( \dots 
		\tilde \matA_2 \cdot \hat{\sigma} \left(
		\tilde \matA_1 \cdot (-\vx) + \tilde \vb_1
		\right) + \tilde \vb_2 \dots \right) - \tilde \vb_{L+1}
		\right)_i
		\Bigg| \leq \epsilon.
	\end{align}
\end{lemma}

In order to prove the above result we first show the following auxiliary result.

\begin{lemma}\label{App:lem:aux_acti_property}
	Let $\sigma\colon \reals \to \reals$ be nondecreasing and satisfy (\ref{App:eq:actiProperty}). For any $R>0$, $f\in C([-R,R])$ nondecreasing and $\epsilon>0$ there exists $P=\max\{\lceil 3(f(R)-f(-R))/\epsilon\rceil,\, 1\}$, $a\in \reals_{\geq 0}^P$, $\ve{b}\in \reals^P$, $\ve{c}\in \reals_{\geq 0}^P$, such that for all $x\in [-R,R]$
	\begin{equation*}
		\left| f(x) - \ve{c} \cdot \hat{\sigma}(x\ve{a}+\ve{b}) \right| \leq \epsilon 
		.
	\end{equation*}
\end{lemma}

\begin{proof}
	Without loss of generality we may assume that $\sigma(x) \xrightarrow{x\to-\infty} 0$ and $\sigma(x) \xrightarrow{x\to \infty} 1$.
	We	define $x_P=R$ and for $i=0,1,\dots, P-1$
	\begin{equation*}
		x_i=\inf\left\{x\in[-R,R] \setsep f(x) =  f(-R)+i\frac{\epsilon}{3} \right\}.
	\end{equation*}
	Next, we choose $S\geq 0$ large enough such that 
	$\sigma(-S) \leq \frac{1}{P}$ and $\sigma( S) \geq 1-\frac{1}{P}$.
	We define 
	\begin{equation*}
		\psi \colon x \mapsto f(-R) + \sum_{i=1}^{P} \sigma\bigg( 2S \frac{x-x_{i-1}}{x_i-x_{i-1}} -S\bigg)
		\Big( f(x_i)-f(x_{i-1}) \Big).
	\end{equation*}
	Clearly, for $x_{j}\leq x \leq x_{j+1}$
	\begin{equation*}
		f(-R) + (1-1/P)\sum_{i=1}^{j}
		\Big( f(x_i)-f(x_{i-1}) \Big)
		\leq 
		\psi(x)
		\leq
		f(x_{j+1})+\sum_{i=j+2}^{P}
		\Big( f(x_i)-f(x_{i-1}) \Big)/P.
	\end{equation*}
	Thus, $|\psi(x)-f(x)| \leq \epsilon$.
\end{proof}

\begin{proof}[Proof of Lemma~\ref{App:lem:typeAvsAprime}] We fix $M\geq \max_{l,i,j}|(\matA_{l})_{i,j}|$, $M\neq 0$.
	Let $\delta$ be small enough such that $|\sigma(\vx)-\sigma(\vy)| \leq \epsilon/(2Mm_L)$ for all $|\vx-\vy|\leq\delta$.
	For $L=0$ hidden layers the statement of the lemma is trivial. The general case will be shown by induction with respect to $L$. Thus, we assume that we can choose $\bar \matA_1,\dots, \bar \matA_L$ such that for all $j=1,\dots,m_L$ 
	\begin{align*}
		\Bigg|\bigg(
		&\matA_{L} \cdot \hat{\sigma} \left( \dots 
		\matA_2 \cdot \hat{\sigma} \left(
		\matA_1 \cdot \vx + \vb_1
		\right)+\vb_2 \dots \right) + \vb_{L}
		\bigg)_j
		\\
		-&
		\left(
		-\bar \matA_{L} \cdot \hat{\sigma} \left( \dots 
		\bar \matA_2 \cdot \hat{\sigma} \left(
		\bar \matA_1 \cdot (-\vx) + \bar \vb_1
		\right) + \bar \vb_2 \dots \right) - \bar \vb_{L}
		\right)_j
		\Bigg| \leq \delta.
	\end{align*}
	We choose $R>0$ such that for all $j=1,\dots,m_L$ 
	\begin{equation*}
		\Bigg|\bigg(
		\matA_{L} \cdot \hat{\sigma} \left( \dots 
		\matA_2 \cdot  \hat{\sigma} \left(
		\matA_1 \cdot \vx + \vb_1
		\right)+\vb_2 \dots \right) + \vb_{L}
		\bigg)_j
		\Bigg| \leq R-\delta \qquad \text{for all }\vx\in U.
	\end{equation*}
	Let $P,\ve{a},\ve{b},\ve{c}$ be given by Lemma~\ref{App:lem:aux_acti_property} with $f(x)=-\sigma(-x)$ and tolerance $\epsilon/(2Mm_L)$.
	For $1\leq l\leq L-1$ we choose $\tilde \matA_l = \bar \matA_l$, $\tilde \vb_l = \bar \vb_l$. Furthermore, we define
	$\tilde \matA_L\in \reals_{\geq 0}^{ P m_L \times m_{L-1}}$ and $\tilde \vb_l \in \reals^{Pm_L}$ by
	\begin{equation*}
		(\tilde \matA_L)_{i,j} = (\ve{a})_r (\bar \matA_L)_{\lceil i/P \rceil, j}, 
		\qquad
		(\tilde \vb_L)_{i} = (\vb)_{r} + (\ve{a})_r(\overline{\vb}_L)_{\lceil i/P \rceil} 
		\quad \text{with }
		r = i-\left\lfloor\frac{i-1}{P}\right\rfloor P.
	\end{equation*}
	Finally we define $\tilde \matA_{L+1} \in \reals^{N \times Pm_L}$ and ${\tilde \vb}_{L+1} \in \reals^{N}$ by
	\begin{equation*}
		(\tilde \matA_{L+1})_{i,j} = (\ve{c})_{r} (\matA_{L+1})_{i,\lceil j/P \rceil},
		\quad \text{with }
		r = j-\left\lfloor\frac{j-1}{P}\right\rfloor P, \qquad
		{\tilde \vb}_{L+1} =  -\vb_{L+1}.
	\end{equation*}
	It is straightforward to verify that this choice for $\tilde\matA_{l}$ and $\tilde \vb_l$ satisfies (\ref{App:eq:lemArchitectureComparison}).
\end{proof} 
It is well-known that the set of fully-connected feedforward neural networks has the \textit{universal approximation property} \cite{cybenko1989approximation, hornik1989multilayer}.
It states that for any continuous function $f\in C(K)$ defined on a compact set $U\subset \reals^d$ and error tolerance $\epsilon>0$ there exists a neural network $ \eta :\reals^d\to \reals$ such that 
$
\|f-\eta\|_{C(K)}\leq \epsilon.
$
The proofs given in the classical works \cite{cybenko1989approximation, hornik1989multilayer} show the existence of shallow neural networks (i.e. networks with only one hidden layer) satisfying the above estimate but do not quantify how many neurons in the hidden layer are needed to achieve this accuracy. Therefore, the neural network $\eta$ might have a large number of parameters and the result does not necessarily indicate that neural networks are particularly efficient for approximation. Still neural networks can be used to approximate large classes of functions. For example, approximation in $L^P$ spaces, $1\leq p<\infty$, can be established by a density argument. 
These results are easily adaptable to EVNNs with architectures of type (A) or (A') which form a subset of all multilayer fully connected feedforward neural networks. This is due to shallow neural networks being approximable by networks with an architecture of type (A) and (A'), as the following lemma demonstrates.

\begin{lemma}[Shallow Neural Networks as EVNNs]\label{App:shallowEVNN}
	Let $\sigma$ be nondecreasing, continuous and satisfy (\ref{App:eq:actiProperty}).
	Let $U\subset \reals^d$ be compact, $\epsilon>0$ and $\eta$ a shallow neural network
	defined by $\eta(\vx)= \matA_2 \cdot \hat{\sigma} (\matA_1\cdot \vx+\vb_1)+\vb_2$.
	There exist networks $\eta^{(1)}$, $\eta^{(2)}$
	with architecture of type (A) and (A') respectively
	such that $|\eta(\vx)_i-\eta^{(1)}(\vx)_i|\leq \epsilon$, $|\eta(\vx)_i-\eta^{(2)}(\vx)_i|\leq \epsilon$ for all $\vx\in U$ and $i=1,\dots,N$. Furthermore, $\eta_2$ can be chosen such that $K\leq 2^{d-1}$.
\end{lemma}

\begin{proof}
	It is easy to see that
	\begin{equation*}
		\eta(\vx)_i = \sum_{j=1}^{m_1}  (\matA_2)_{i,j}\ \sigma \left( \sum_{k=1}^d  |(\matA_1)_{j,k}|\, s_{j,k} \, (\vx)_k + (\vb_1)_j  \right) +(\vb_2)_i
		= 
		\sum_{j=1}^{m_1}  (\matA_2)_{i,j}\ \sigma \left(  \matB_{1,j}\cdot(\vs_j\odot \vx) + b_j  \right) +(\vb_2)_i
	\end{equation*}
	with 
	\begin{equation*}
		\matB_{1,j} = (|(\matA_1)_{j,1}|,\,|(\matA_1)_{j,2}|,\,...,\,|(\matA_1)_{j,d}|)\in\reals^{1\times d}, \quad \vs_j = (s_{j,1},\,s_{j,2},\,...,\,s_{j,d})\in \reals^d, \quad s_{j,k} = \begin{cases}
			1,\; &\text{if }(\matA_1)_{j,k}\geq 0,\\
			-1,\; &\text{otherwise.}\end{cases}
	\end{equation*}
	According to Lemma~\ref{App:lem:typeAvsAprime} we may choose $\tilde \matA_{2,j}, \tilde \matA_{1,j}$ with positive elements and $\tilde \vb_{2}, \tilde \vb_{1,j}$ such that for all $\vx\in U$ and $i=1,\dots,N$
	\begin{equation*}
		\Bigg| 
		\sum_{j=1}^{m_1}  \min\{(\matA_2)_{i,j}, 0\} \, \sigma \left(  \matB_{1,j}\cdot(\vs_j\odot \vx) + (\vb_1)_j
		\right) + (\vb_2)_i
		-
		\bigg(
		\sum_{j=1}^{m_1} \tilde \matA_{2,j} \cdot \hat \sigma\Big( \tilde \matA_{1,j} \cdot(-\vs_j \odot \vx)+\tilde \vb_{1,j}\Big) + \tilde \vb_{2} 
		\bigg)_i
		\Bigg| < \epsilon.
	\end{equation*}
	This proves the existence of $\eta^{(1)}$. For $\eta^{(2)}$ it is sufficient to note that due to Lemma~\ref{App:lem:typeAvsAprime} we find $\tilde \matA_{2,j}, \tilde \matA_{1,j}, \tilde \vb_{2,j}, \tilde \vb_{1,j}$ such that for all $\vx\in U$ and $i=1,\dots,N$
	\begin{equation*}
		\Bigg|\sum_{j\in J'} (\matA_2)_{i,j}\sigma \left(  \matB_{1,j}\cdot(\vs_j\odot \vx) + (\vb_1)_j
		\right)  + (\vb_2)_i
		-
		\bigg(\sum_{j\in J'} \tilde \matA_{2,j} \cdot \hat \sigma\Big( \tilde \matA_{1,j} \cdot(-\vs_j \odot \vx)+\tilde \vb_{1,j}\Big) + \tilde \vb_{2,j}
		\bigg)_i
		\Bigg| < \epsilon,
	\end{equation*}
	where $J' = \Big\{ j\in \{1,\dots,m_1\}\setsep (\vs_j)_1 =-1 \Big\}$.
\end{proof}

\begin{lemma}[Universal Approximation by EVNNs]\label{App:lem:UniversalApprox}
	Given an arbitrary $L\in \naturals$, a non-decreasing continuous activation functions $\sigma$ satisfying (\ref{App:eq:actiProperty}), a compact set $U\subset \reals^d$, a finite measure $\mu$ on $U$, $p\in(0,\infty)$ and functions $f_1\in C^k(U;\reals)$, $f_2\in L^p_\mu(U;\reals)$ and a measurable function $f_3\colon U\to \reals$, there exist three sequences of networks with architectures of type (A) (or type (A')) with $L$ hidden layers  $(u^{(1)}_n)_{n\in\naturals}$, $(u^{(2)}_n)_{n\in\naturals}$ and $(u^{(3)}_n)_{n\in\naturals}$ with  activation function $\sigma$ and one single hidden layer such that:
	\begin{enumerate}[(i)]
		\item $u^{(1)}_n$ converges in $C(U;\reals)$ to $f_1$ as $n\to\infty$,\label{App:ena}
		\item $u^{(2)}_n$ converges in $L^p_\mu(U;\reals)$ to $f_2$ as $n\to\infty$,\label{App:enb}
		\item $u^{(3)}_n$ converges in measure to $f_3$ with respect to $\mu$ for $n\to\infty$.\label{App:enc}
	\end{enumerate}
\end{lemma}

\begin{proof}
	Due to Lemma~\ref{App:lem:typeAvsAprime} it is sufficient to consider EVNNs of type (A).
	We start with the case $L=1$.
	According to \cite[Theorem 2.4 and Corollary 2.2]{hornik1989multilayer}, there exist sequences of shallow neural networks with activation function $\sigma$ such that (\ref{App:ena})-(\ref{App:enc}) hold. Thus, the claim follows from Lemma~\ref{App:shallowEVNN}. 
	
	For networks with more hidden layers it is sufficient to show that the identity mapping can be approximated by the last $L-1$ layers of each subnetwork in a suitable way. This follows immediately from Lemma~\ref{App:lem:aux_acti_property}. 
\end{proof}

\begin{remark}
	Looking at the proof of Lemma~\ref{App:shallowEVNN}, the famous result by Barron \cite{barron1993universal} on the approximation of certain very regular functions without the curse of dimensionality remains true for networks with architectures of type (A) and (A'), albeit with a worse error rate in terms of the parameters of the network.
\end{remark}

While it is important to know that the networks with architectures of type (A) are able to approximate functions up to arbitrary accuracy in various common metrics, for our purposes a different question is more natural: 
\begin{question}\label{App:q:1}
	Can functions with bounded M-variation be approximated up to arbitrary accuracy with networks whose estimated M-variation remains bounded? 
\end{question}
This question is essential for the algorithms presented in Section~\ref{sec:applications}. For example, the M-variation of an EVNN determines how many sampling points are necessary to guarantee a given accuracy of the numerical integration scheme presented in Section~\ref{sec:IntegratingBMVFunctions}. 
If the M-variation increases as we approach a target function, the numerical integration would get increasingly computationally expensive. 
Clearly, without a bound on the computational cost an algorithm need not terminate within reasonable or even finite time.

For $d=1$ the answer to Question~\ref{App:q:1} is obviously positive. In fact, shallow neural networks are sufficient in this case. For $d>1$ the situation is more complex.
We illustrate this by presenting a negative result for shallow networks architectures of type (A) or (A') for $d=2$.

\begin{theorem}\label{App:thm:shallowMonotoneApproximationFailure}
	For any $\epsilon>0$ there exists a nondecreasing function $f\colon [0,1]^2 \to \reals$ such that for any network $\eta\colon [0,1]^2 \to \reals$ with one hidden layer, nondecreasing activation function $\sigma$ and nonnegative weights,
	\begin{equation}\label{App:shallow_monotone_net}
		\eta(x,y) = \sum_{j=1}^{W} \lambda_{j}\ \sigma\left( a_{j,1} x+a_{j,2} y + b_j  \right) + b_0, \qquad a_{j,1},\,a_{j,2} \in\reals_{\geq 0},\,\lambda_{j} \geq 0 \text{ for all }j=1,\dots,W,
	\end{equation}
	it holds that $ \|f-\eta\|_{L^1([0,1]^2)}>\epsilon$. 
\end{theorem}

\begin{proof}
	We explicitly construct such a function $f$ for a given $\epsilon$.
	Let $f = 16 \epsilon \ind{U}$ with
	$
	U=\{(x,y)\in\reals^2 \setsep x> 0.5,\, y>0.5\}.
	$
	Further
	\begin{equation*}
		\begin{array}{>{\displaystyle}r>{\displaystyle}l>{\displaystyle}r>{\displaystyle}l}
			Q_0 &= [0,1/4]^2, 
			&Q_1 &= [1/4, 1/2] \times [3/4,1], \\[2mm]
			Q_2 &= [1/2, 3/4] \times [1/2,3/4], 
			&Q_3 &= [3/4,1] \times [1/4, 1/2].
		\end{array}
	\end{equation*}
	By construction $f(x,y) = 0$ for all $(x,y)\in Q_1 \cup Q_3$, $f(x,y)=16 \epsilon$ for almost all $(x,y)\in Q_2$. 
	We want to bound the sum of the following errors from below:
	\begin{equation*}
		\begin{array}{>{\displaystyle}r>{\displaystyle}l>{\displaystyle}r>{\displaystyle}l}
			\mathcal{E}_0 &= \int_{Q_0} |\eta(x,y)| \ddd(x,y), \quad 
			&\mathcal{E}_1 &= \int_{Q_1} |\eta(x,y)| \ddd(x,y), \\[5mm]
			\mathcal{E}_2 &= \int_{Q_2} |16\epsilon-\eta(x,y)| \ddd(x,y), \quad 
			&\mathcal{E}_3 &= \int_{Q_3} |\eta(x,y)| \ddd(x,y).
		\end{array}
	\end{equation*}
	Due to $a_{j,1},a_{j,2}>0$, the functions $\eta_i$ defined by 
	\begin{equation*}
		\eta_i(x, y) \coloneqq \lambda_i \sigma\left( a_{i,1} x+a_{i,2} y + b_i  \right) - \lambda_i \sigma\left( a_{i,1}/4+a_{i,2}/4 + b_i  \right)
	\end{equation*}
	are nondecreasing with $\eta_i(\ve{q}_0)=0$, $\ve{q}_0=(1/4,1/4)$. Therefore, $\eta_i(x,y)\geq 0$ for all $(x,y) \in Q_1 \cup Q_2 \cup Q_3$. Note also, that $\eta(x,y) = \eta(\ve{q}_0)+\sum_{i=1}^{W} \eta_i(x,y)$.
	Moreover, on any fixed line, the functions $\eta_i$ are monotone due to the monotonicity of $\sigma$. Thus, for any $\ve{q}_2 \in Q_2$ and $i=1,\dots,W$, we find
	\begin{equation*}
		\min\{\eta_i(\ve{q}_1),\, \eta_i(\ve{q}_3)\}
		\leq \eta_i(\ve{q}_2) \leq
		\max\{\eta_i(\ve{q}_1),\, \eta_i(\ve{q}_3)\}
		\leq \eta_i(\ve{q}_1)+\eta_i(\ve{q}_3),
	\end{equation*}
	where $\ve{q}_1 = \ve{q}_2 - (1/4,\,-1/4) \in Q_1$ and $\ve{q}_3 = \ve{q}_2 + (1/4,\,-1/4) \in Q_3$. 
	Summing up over $i$ and adding $\eta(\ve{q}_0)$ we find
	\begin{equation*}
		\eta(\ve{q}_2) \leq \eta(\ve{q}_1)+\eta(\ve{q}_3)-\eta(\ve{q}_0).
	\end{equation*}
	Due to the monotonicity of $\eta$, we find $\eta(\ve{q}) \leq \eta(\ve{q}_0)$ for each $\ve{q}\in Q_0$. This leads to the following relation
	\begin{equation*}
		\mathcal{E}_0 \geq \int_{Q_0} -\eta(\ve{q}) \ddd \ve{q} \geq -\eta(\ve{q}_0) |Q_0|.
	\end{equation*}
	Realizing that
	\begin{equation*}
		|f(\ve{q}_2)-\eta(\ve{q}_2)| \geq f(\ve{q}_2) - \eta(\ve{q}_1)-\eta(\ve{q}_3)+\eta(\ve{q}_0)
		\geq f(\ve{q}_2)- |\eta(\ve{q}_1)|-|\eta(\ve{q}_3)| -\mathcal{E}_0/|Q_0|,
	\end{equation*}
	and integrating over $\ve{q}_2 \in Q_2$ finally leads to
	\begin{equation*}
		\|f-\eta\|_{L^1([0,1]^2)} \geq \mathcal{E}_0 +\mathcal{E}_1 +\mathcal{E}_2 +\mathcal{E}_3 \geq 16 \epsilon |Q_2| = \epsilon,
	\end{equation*}	
	and finishes the proof.	
\end{proof}

As a direct consequence of Theorem~\ref{App:thm:shallowMonotoneApproximationFailure} we get the following result.

\begin{corollary}
	For any $\epsilon>0$ there exists a nondecreasing continuous function $f\colon [0,1]^2 \to \reals$ such that for any network $\eta\colon [0,1]^2 \to \reals$ with one hidden layer, nondecreasing activation function $\sigma$ and nonnegative weights of the form (\ref{App:shallow_monotone_net})
	it holds that $ \|f-\eta\|_{\infty}>\epsilon$. 
\end{corollary}

Further, with the same ideas from Theorem~\ref{App:thm:shallowMonotoneApproximationFailure} but a slightly more involved construction it is possible to derive the following proposition. We omit the proof.

\begin{proposition}
	There exists a nondecreasing function $f\colon [0,1]^2\to \reals$ and constants $C_1, C_2>0$ such that for any network $\eta$ with an architecture of type (A) with $L=1$ hidden layers,
	\begin{equation*}
		\|f-\eta\|_{L^1([0,1]^2)} + C_1(\EMVar(\eta)-\MVar(\eta)) \geq C_2.
	\end{equation*}
\end{proposition}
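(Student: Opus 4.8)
The plan is to prove the following equivalent dichotomy: there are a bounded nondecreasing $f\colon[0,1]^2\to\reals$ and constants $\epsilon_0,\gamma_0>0$ such that every single--hidden--layer type-(A) network $\eta$ on $[0,1]^2$ satisfies $\|f-\eta\|_{L^1([0,1]^2)}\ge\epsilon_0$ or $\EMVar(\eta)-\MVar(\eta)\ge\gamma_0$. The proposition then follows with $C_1>0$ arbitrary and $C_2=\min\{\epsilon_0,C_1\gamma_0\}$. I will use throughout that a single--hidden--layer type-(A) network may be written as $\eta=\sum_{i\in J}\eta_i(\vs_i\odot\vx)+b$, grouping the nonnegatively weighted neurons by the quadrant $\vs_i$ of their weight vector, so that each $\eta_i$ is nondecreasing with respect to $\vs_i$, each individual neuron is monotone along every affine line, $\EMVar(\eta)=\sum_{i\in J}\bigl(\sup_{[0,1]^2}\eta_i-\inf_{[0,1]^2}\eta_i\bigr)$, and $\MVar(\eta)$ is the infimum of this expression over all monotone decompositions of $\eta$. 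I also note that $\MVar(g;[0,1]^2)=\sup g-\inf g$ whenever $g$ is nondecreasing, so the target $f$ should be chosen with range as small as the obstruction permits.

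For $f$ I would take a nondecreasing refinement of the cross--pattern of Theorem~\ref{thm:shallowMonotoneApproximationFailure}: a bounded nondecreasing function that, on one or several rescaled and translated copies of a configuration $Q_0,Q_1,Q_2,Q_3$, locally reproduces the profile forcing failure there, while being constant on a subregion of definite measure. The additional work over Theorem~\ref{thm:shallowMonotoneApproximationFailure} --- the ``slightly more involved construction'' --- is to place these copies so that no single auxiliary direction in $\{(1,-1),(-1,1),(-1,-1)\}$, nor a cheap combination of them, can simultaneously resolve all the local obstructions; this is exactly what forces the deficiency to show up in $\EMVar-\MVar$ rather than only in $\EMVar$.

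The argument, assuming $\|f-\eta\|_{L^1}<\epsilon_0$, then has two steps. First, I would apply the cross--pattern estimate of Theorem~\ref{thm:shallowMonotoneApproximationFailure} to the monotone subnetwork $\eta_{(1,1)}$: being a sum of nonnegatively weighted neurons that is monotone in $(1,1)$ and monotone along every line, $\eta_{(1,1)}$ obeys the same midpoint inequality $\eta_{(1,1)}(q_2)\le\eta_{(1,1)}(q_1)+\eta_{(1,1)}(q_3)-\eta_{(1,1)}(q_0)$ used there, now evaluated on sets of positive measure extracted from the $L^1$--proximity of $\eta$ to $f$. Since $\eta_{(1,1)}+b$ cannot approximate $f$ within the constant of Theorem~\ref{thm:shallowMonotoneApproximationFailure}, the remaining subnetworks must carry total oscillation at least a fixed $\gamma_1>0$: otherwise $\eta_{(1,1)}+\bar b$, itself a single--hidden--layer monotone network, would be $L^1$--closer to $f$ than that constant allows. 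Second --- and this is the heart of the matter --- because $f$ is nondecreasing and flat on a region of definite measure, the non-$(1,1)$ subnetworks (which occur in opposite pairs $\{(1,1),(-1,-1)\}$, $\{(1,-1),(-1,1)\}$) must approximately cancel there, and a common monotone part of size bounded below in terms of $\gamma_1$ is thereby double--counted across two $\EMVar$-summands; peeling it off gives an admissible decomposition of $\eta$ of strictly smaller total oscillation, hence $\MVar(\eta)\le\EMVar(\eta)-\gamma_0$.

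I expect the second step to be the main obstacle. The bound $\EMVar(\eta)\ge\MVar(\eta)$ is automatic, but quantifying the slack requires producing a concretely cheaper monotone decomposition of $\eta$ and estimating its cost, which is subtle because $L^1$--closeness to $f$ places no upper bound on the oscillation of $\eta$ (nor on $\MVar(\eta)$); one must exploit the nondecreasing structure of $f$, the flat region, and the monotone-along-lines property of shallow neurons to argue that any oscillation carried by an auxiliary direction is either productive toward the corner --- and then bounded through Theorem~\ref{thm:shallowMonotoneApproximationFailure} --- or cancels on the flat part --- and then is recoverable as a common monotone summand. A secondary, purely geometric difficulty is to promote the pointwise midpoint and order relations of Theorem~\ref{thm:shallowMonotoneApproximationFailure} to statements valid on positive--measure configurations compatible with every rescaled copy, which is what dictates how elaborate the construction of $f$ must be.
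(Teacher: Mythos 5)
The paper omits the proof of this proposition, so there is no reference argument to compare against; I assess your sketch on its own merits. Your reduction to the dichotomy --- either $\|f-\eta\|_{L^1}\ge\epsilon_0$ or $\EMVar(\eta)-\MVar(\eta)\ge\gamma_0$ --- is the right logical skeleton, and your first step is essentially correct: the $(1,1)$-monotone subnetwork $\eta_{(1,1)}$, shifted by any constant, is itself a shallow monotone network of the form (\ref{shallow_monotone_net}), so Theorem~\ref{thm:shallowMonotoneApproximationFailure} gives $\inf_c \|f-\eta_{(1,1)}-c\|_{L^1}>\epsilon$, and if additionally $\|f-\eta\|_{L^1}<\epsilon_0$ then the remaining subnetworks carry total oscillation at least $\gamma_1 = \epsilon-\epsilon_0 > 0$.

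The gap is in your second step, and it is a genuine one rather than a detail to be filled in. You argue that on a flat region of $f$ the non-$(1,1)$ subnetworks must approximately cancel, and that a common monotone summand can then be peeled off to exhibit a decomposition of $\eta$ cheaper than the architectural one. Neither half of this is established. First, $L^1$-closeness of $\eta$ to $f$ on a flat region only constrains the full sum $\sum_{i}\eta_i$ to be approximately constant there; the nondecreasing subnetwork $\eta_{(1,1)}$ is not required to be constant on that region, so it can absorb an arbitrary amount of the oscillation of the other subnetworks, and no cancellation among the non-$(1,1)$ parts follows. Second, even granting approximate $L^1$-cancellation of, say, $\eta_{(1,-1)}+\eta_{(-1,1)}$ on a subregion $R$, this does not yield a function $g$ of definite range, nondecreasing with respect to $(1,-1)$, such that both $\eta_{(1,-1)}-g$ and $-\eta_{(-1,1)}-g$ stay nondecreasing with respect to $(1,-1)$ on the whole square: the constraint you extract lives only on $R$, while admissibility of the new decomposition (hence any reduction of $\MVar(\eta)$ below $\EMVar(\eta)$) is a constraint on all of $[0,1]^2$, where the two subnetworks can behave independently. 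Without such a $g$ there is no cheaper monotone decomposition and the slack is not bounded away from zero. You flag exactly this as the main obstacle, which is the right diagnosis, but flagging it does not close the gap; as written the sketch does not prove the inequality, and a different mechanism for producing the cheaper decomposition (or a different route to the slack entirely) is required.
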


In the proof of Theorem~\ref{App:thm:shallowMonotoneApproximationFailure} we have constructed a target function of the form $f = \ind{\{x>a_1,\,y>a_2 \}}$ for some fixed $a_1,a_2$ which cannot be approximated by shallow neural networks with architectures of type (A). It is straightforward to show that networks with architectures of type (A) with two hidden layers are easily able to approximate such functions with respect to the $L^1$ norm, even in higher dimensions. Indeed, for $\epsilon<1/3$, assuming without loss of generality that $\sigma_+=1$, $\sigma_-=0$ and choosing $S>0$ as in the proof of Lemma~\ref{App:lem:aux_acti_property}, i.e., $\sigma(-S)< \epsilon$ and $\sigma(S)> 1-\epsilon$, we can define an EVNN in the following way
\begin{equation*}
	\eta(x,y;a_1,a_2, \epsilon, \delta) = 
	\sigma\left(
	-\frac{3-\epsilon}{1-3\epsilon}S
	+\frac{2}{1-3\epsilon}S\sigma(S(x-a_1)/\delta)
	+\frac{2}{1-3\epsilon}S\sigma(S(y-a_2)/\delta)\right).
\end{equation*}
The network $\eta$ satisfies $0\leq \eta\leq 1$ and $\eta(x,y)<\epsilon$ for $x<a_1-\delta$ or $y<a_2-\delta$ and $\eta(x,y)>1-\epsilon$ if both $x>a_1+\delta$ and $y>a_2+\delta$. Accordingly, choosing $\epsilon$, $S$ and $\delta$ appropriately, we are able to approximate $f$ to arbitrary accuracy in $L^1([0,1]^2)$. 
More generally, a network of the form
\begin{align} \label{App:monotone_ind_approx}
	&\nu(x_1,x_2,\dots,x_d; a_1,a_2,\dots, a_d, \epsilon, \delta) \notag
	\\
	&\qquad= 
	\sigma\left(
	-\frac{2d-1-(d-1)\epsilon}{1-(d+1)\epsilon}S
	+\sum_{i=1}^{d}
	\frac{2S}{1-(d+1)\epsilon}\sigma(S(x_i-a_i)/\delta)
	\right)
\end{align}
can be used to approximate $f = \ind{\{\vx\in\reals^d \setsep x_i>a_i \text{ for all }i=1,\dots,d\}}$. This illustrates how increasing the number of hidden layers of the network enables the network to approximate larger classes of functions efficiently. Intuitively speaking, with one hidden layer it is possible to approximate indicator functions of half spaces $\{\vx\in\reals^d \setsep \vx\cdot \vn\geq \ve{a}\}$. With two hidden layers, the indicator function of finite intersections or finite unions of half spaces can be approximated. Each additional hidden layer enables the approximation indicator functions of unions or intersections of the previous classes of sets.

This intuition can be formalized to prove that EVNNs can uniformly approximate any continuous function in $BMV$ without overestimating the M-variation. For simplicity, we restrict ourselves to functions defined on hyperrectangles $H = \prod_{i=1}^d [a_i, b_i]$.

\begin{theorem}[Universal approximation of $BMV$ functions]\label{App:thm:UniBMVapprox}
	For any continuous $BMV$ function $f\colon H \to \reals$, any $\epsilon>0$, any nondecreasing activation function $\sigma$ satisfying (\ref{App:eq:actiProperty}) and any $L\geq 3$, there exists a EVNN with an architecture of type (A) with $L$ hidden layers and activation function $\sigma$ satisfying $\|f-\eta\|_\infty \leq \epsilon$ and $\EMVar(\eta;H) < \MVar(f;H)$.
\end{theorem}

\begin{proof}
	We restrict ourselves to the case of $L=3$. The same approximation of the identity function used at the end of Lemma~\ref{App:lem:UniversalApprox} can be applied to obtain the assertion in the general case.
	Without loss of generalization, let $\epsilon<(d+1)^{-1}$.
	The case $\max_Hf+\min_Hf=0$ is trivial.
	Thus, we may assume $3\epsilon< (\max_Hf+\min_Hf)$. 
	There exist bounded monotone functions $f_1, \dots, f_{2^d} \colon \reals^d \to \reals$ with 
	\begin{equation*}
		f(\vx) = \sum_{i=1}^{2^d} f_i(\vx), \qquad\MVar(f) + \frac{2\epsilon}{13} > \sum_{i=1}^{2^d} \left(\max_H f_i - \min_H f_i \right)
		=\sum_{i=1}^{2^d} \MVar(f_i).
	\end{equation*}
	We extend $f$ to $C(\reals^d)\cap BMV(\reals^d)$ without changing the M-variation by setting
	\begin{equation*}
		f(x_1,\dots, x_d) = f(\max\{\min\{x_1, a_1\}, b_1\}, \dots,\max\{\min\{x_d, a_d\}, b_d\}). 
	\end{equation*}
	By a standard mollification argument, we find $\hat{f}$ with $\|\hat{f}-f\|_\infty \leq \epsilon/3$
	and continuous monotone functions $\hat{f}_i$, $i=1,\dots,2^d$ such that
	\begin{equation*}
		\hat{f}(\vx) = \sum_{i=1}^{2^d} \hat{f}_i(\vx), \qquad\MVar(f) + \frac{2\epsilon}{13} > \sum_{i=1}^{2^d} \left(\max_H \hat{f}_i - \min_H \hat{f}_i \right).
	\end{equation*}
	Note that $2\epsilon<(\max_Hf+\min_Hf)$.
	Clearly, it is sufficient to show that we can approximate $\hat{f}_i$ by a subnetwork $\eta_i$ such that 
	\begin{equation*}
		\sup_H|\eta_i-\hat{f}_i|\leq \epsilon \frac{\MVar(\hat{f}_i)}{2 \sum \MVar(\hat{f}_j)}, \qquad
		\MVar(\eta_i) \leq \left( 1- \frac{2\epsilon}{13 \sum \MVar(\hat{f}_j)} \right)\MVar(\hat{f}_i).
	\end{equation*}
	Therefore, now denoting $f=\hat{f}_i$, it is sufficient to show that for any continuous nondecreasing function $f$ and any $\tilde\epsilon \leq \MVar(f)/4$ there exists a network $\eta$ with $L$ hidden layers, positive weights and activation function $\sigma$ such that
	$|f-\eta|_\infty< \tilde\epsilon$ and
	$\EMVar(\eta) \leq \MVar(f) - 4\tilde\epsilon/13$. 
	Moreover, we may assume $H=[0,1]^d$, $\sigma_{+}=1$, $ \sigma_{-}=0$ and $\MVar(f)>0$.
	
	We define $K_1= \lceil3\MVar(f)/\tilde{\epsilon} \rceil$
	and $\delta_1 = \MVar(f)/K_1$. Due to $\tilde{\epsilon} \leq \MVar(f)/4$ we find $ 4\tilde{\epsilon}/13 \leq \delta_1 \leq \tilde{\epsilon}/3$.
	We denote the family of upper level sets of $f$ for $z\in[f(0,\dots,0), f(1,\dots,1)]$ by
	\begin{equation*}
		V_{z} \coloneqq \left\{(x_1,\dots,x_d) \in [0,1]^d\setsep f(x_1,\dots,x_n)>z \right\}.
	\end{equation*}
	Moreover, for $k=0,1,...,K_1-1$ we set $z_k = f(0,\dots,0)+k\delta_1$.
	Fixing $k \in \{1,\dots,K_1-1\}$, we want to approximate the function $\ind{V_{z_k}}$ by a neural network with nonnegative weights and three hidden layers. 
	Due to the continuity of $f$, there exits $\delta_2>0$ such that
	$\vx\in V_{z_{k-1}}$ for all $\vx$ with $\operatorname{dist}(\vx,V_{z_k})< 3\delta_2$.
	Moreover, 
	due to compactness there exists a finite covering of $\partial V_{z_k}$ by open rectangles $\prod_{i=1}^{d}(x^{(l)}_i-\delta_2, x^{(l)}_i+\delta_2)$, with $x^{(l)}= (x^{(l)}_1, \dots,x^{(l)}_d)\in V_{z_k}$, $l=1,...,K_2$. We choose $\delta_3\in (0,1)$ sufficiently small such that $3K_2\delta_3<(1-\delta_3)$. We define 
	\begin{equation*}
		\eta^{(k, l)}(x_1,\dots,x_d) = \nu\left(x_1,\dots,x_d;\; x^{(l)}_1-2\delta_2,\,\dots,\,x^{(l)}_d-2\delta_2, \delta_3, \delta_2 \right),
	\end{equation*}
	where $\nu$ is given by (\ref{App:monotone_ind_approx}). Similarly to before, we choose $S>0$ large enough such that
	\begin{equation*}
		\sigma(S) \geq 1-\delta_4,\qquad \sigma(-S)\leq \delta_4, \qquad \delta_4 \coloneqq \frac{1}{(K_1-2)}.
	\end{equation*}
	Next, we define
	\begin{equation*}
		\eta^{(k)}(x_1,\dots,x_d) = \sigma\left(-2S+3S/(1-\delta_3)\sum_{l=1}^{K_2} \eta^{(k, l)}(x_1,x_2,\dots,x_d)\right).
	\end{equation*}
	For $\vx \in V_{z_{k}}$ there exists a point $\hat \vx \in \partial V_{z_k}$ with $\hat \vx < \vx$. Here, we use the partial order $<$ on $\reals^n$ defined in 
	Section~2. 
	Accordingly, there exists $l\in \{1,\dots,K_2\}$ with $\hat{\vx} \in \prod_{i=1}^{d}(x^{(l)}_i-\delta_2, x^{(l)}_i+\delta_2)$ and thus $\vx\in \prod_{i=1}^{d}(x^{(l)}_i-\delta_2, \infty)$. Therefore, $x_i-(x^{(l)}_i-2 \delta_2) \geq \delta_2$ which implies $\eta^{(k,l)}(\vx)>1-\delta_3$. Consequently, $\eta^{(k)}(\vx)>1-\delta_4$. On the other hand, if $\vx \notin V_{z_{k-1}}$, i.e., if $f(\vx)\leq z_{k}-\delta_1$, then, due to the definition of $\delta_2$, we know that $\dist(\vx,V_{z_k})\geq 3 \delta_2$ and thus $x_i \leq x^{(l)}_i - 3 \delta_2$ for all $i=1,\dots,d$ and $l=1,\dots, K_2$. This implies $\eta^{(k,l)}(\vx)< \delta_3 $ for all $l=1,\dots, K_2$. Thus, by our choice for $ \delta_3$, $\eta^{(k)}(\vx) \leq \delta_4$.
	
	We claim that the network with three hidden layer and nonnegative weights defined by
	\begin{equation*}
		\eta(\vx) \coloneqq f(0,\dots,0) + \delta_1\sum_{k=1}^{K_1-1}  \eta^{(k)}(\vx)
	\end{equation*}
	satisfies $\|f-\eta\|_{\infty} \leq \tilde \epsilon$. Indeed, if $\vx \in V_{z_{k-1}}\backslash V_{z_k}$, i.e., if $(k-1)\delta_1<f(\vx)-f(\ve{0}) \leq k \delta_1$, we see 
	$\eta^{(j)}(\vx)\in[1-\delta_4,1]$ for all $j=1,\dots,k-1$, $\eta^{(k)}(\vx) \in [0,1]$ and $\eta^{(j)}(\vx) \in [0, \delta_4] $ for all $j=k+1,\dots,K_1-1$. Therefore
	\begin{equation*}
		|\eta(\vx)-f(\vx)| \leq |\eta(\vx)-f(\ve{0})-\delta_1(k-1)| + \delta_1
		\leq \delta_1(2+(K_1-2) \delta_4) \leq \tilde \epsilon.
	\end{equation*}
	Moreover,
	$
	\eta(1,\dots,1) \leq f(1,\dots,1)-\delta_1.
	$
	This completes the proof.
\end{proof}

If we drop the assumption of $f$ being continuous, we have to restrict ourselves to convergence in $L^p(H)$, $1 \leq p < \infty$.

\begin{corollary}\label{App:cor:BMVApproxinL1}
	Let $f \in BMV(H)$, $\epsilon>0$, $1\leq p< \infty$. For any nondecreasing activation function $\sigma$ satisfying (\ref{App:eq:actiProperty}) and any $L\geq 3$ there exists a EVNN with an architecture of type (A) with $L$ hidden layers and activation function $\sigma$ satisfying $\|f-\eta\|_{L^1(H)} \leq \epsilon$ and $\EMVar(\eta) < \MVar(f)$.
\end{corollary}

\begin{proof} We define the mollified extension 
	$
	E_h[f] =  \kappa_{h} * E[f],
	$
	with $E$ defined by
	\begin{align*}
		E[f;U](\vx) &=
		\sup \left( \{f(\vy) \setsep \vy\in U,\, \vy \leq \vx\} \cup \{\inf_{\vy\in U} f(\vy)\}\right)
		.
	\end{align*}
	Moreover, $\kappa$ denotes the standard mollifier and $\kappa_h(\vx) = h^{-d} \kappa(h \vx)$.
	Choosing $h$ small enough, we have $\|E_h(f)-f\|_{L^p(H)} \leq \frac{\epsilon}{2}$ and $\MVar(E_h[f], H) \leq \MVar(f)$. Using Theorem~\ref{App:thm:UniBMVapprox} we derive the conclusion of the corollary.
\end{proof}

\section{A limitation of the estimated M-variation}\label{App:sec:shockLimitation}

The estimated M-variation may predict shocks even if there are none. If $d>1$, this is not necessarily the fault of the chosen optimizing strategy, but can be caused by a limitation of the estimated M-variation itself. The following example illustrates this behavior.

\begin{example}\label{App:ex:EMVar}
	We consider the domain $U=[0,1]^2 \subset \reals^2$ and define $f=\ind{A}$ with
	\begin{equation*}
		A = \Big\{\vx=(x_1,x_2) \in U \setsep |x_1-1/2| \leq 1/6,\;|x_2-1/6| \leq 1/6\Big\}.
	\end{equation*}
	Clearly, $f\in BMV(U)$. Given a family of nondecreasing functions $f_i\colon \reals^2 \to \reals$ and $\vs_i\in\{-1,1\}^2$ for all $i\in I$ for a finite set $I$ with
	\begin{equation*}
		f(\vx) = \sum_{i\in I} f_i(\vx \odot \vs_i) \qquad \text{for all }\vx \in U,
	\end{equation*}
	we define $I_+\coloneqq \{i\in I\setsep (\vs_i)_1=1 \}$. Defining 
	\begin{equation*}
		\ve{p}_1 = (1/6, 1/6), \quad \ve{p}_2 = (1/2, 1/6), \quad \ve{p}_3 = (5/6, 1/6)
	\end{equation*}
	we can find an open connected set $B\subset U$ with $B\cap A = \emptyset$ and $\ve{p}_1,\ve{p}_2 \in B$, see Figure~\ref{App:fig:exEMVar}. 
	In particular, if we use $f_i$ to estimate the M-variation of $f$ on $B$, we get
	\begin{align*}
		\EMVar(f; B)
		&= \sup_{\vx\in B} \sum_i f_i(\vx\odot \vs_i)
		- \inf_{\vx\in B} \sum_i f_i(\vx\odot \vs_i)
		\geq 
		\sum_{i\in I_+} f_i (\vs_i \odot \ve{p}_3) - f_i (\vs_i \odot \ve{p}_1)\\
		&\geq
		\sum_{i\in I_+} f_i (\vs_i \odot \ve{p}_2) - f_i (\vs_i \odot \ve{p}_1)
		\geq f(\ve{p}_2)-f(\ve{p}_1)
		= 1 > 0=\MVar(f;B).
	\end{align*}
	
\end{example}
\begin{figure}
	\centering
	\includegraphics{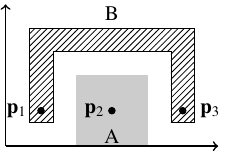}
	\caption{Illustration of Example~\ref{App:ex:EMVar}.}
	\label{App:fig:exEMVar}
\end{figure}

\section{Application to ordinary differential equations}\label{App:sec:ApplicationToODEs}

In this section we are concerned with the existence of explicitly computable loss functions $(\Loss_n)_{n\in\naturals}$ such that any sequence of networks $(\eta_n)_{n\in\naturals}$ with architectures of type (A) or (A') satisfying 
\begin{equation}\label{App:eq:convCon}
	\Loss_n(\eta_n) \xrightarrow{n\to\infty} 0
\end{equation}
converges to the solution of
\begin{equation}\label{App:eq:ode_pointwise}
	\ve{y}'(t) = \ve{F}(t,\,\ve{y}(t)) \quad \text{ for all }t\in [0,T], \quad \ve{y}(0) = \ve{y}_0.
\end{equation}
Moreover, we show that networks satisfying (\ref{App:eq:convCon}) exist.
Here, we assume $\ve{F}\colon \reals\times \reals^N \to \reals^N$ to be continuous and locally Lipschitz continuous with respect to the second argument in the sense that for each $(t,x) \in \reals\times \reals^N$ there exists a neighborhood $U\ni (t,x)$ such that $\ve{F}$ is Lipschitz continuous in the second argument on $U$. We want to solve the following problem using neural networks with architectures of type (A) as trial functions: Given $\ve{y}_0\in \reals^N$ and $T>0$ approximate the solution $\ve{y}\colon[0,T] \to \reals^N$ of (\ref{App:eq:ode_pointwise}).

We propose to base our loss function on the integral formulation of (\ref{App:eq:ode_pointwise}),
\begin{equation}\label{App:eq:ode_int_form}
	\ve{y}(t) = \int_{[0,t]} \ve{F}(s, \ve{y}(s)) \ddd s + \ve{y}_0 \qquad \text{for all }t\in[0,T],
\end{equation}
to find an approximation $\veta$ of $\ve{y}$. 
Due to the regularity of $\ve{F}$, we can integrate the composition $t \mapsto \ve{F}(t, \veta(t))$ numerically. Noting that $\ve{F}$ can be approximated uniformly on compact sets by mappings of the form
\begin{equation*}
	\ve{F}(s,\ve{z}) \approx \sum_{i=1}^{K_\tau} \ind{[(i-1)\tau,\, i\tau ]} \ve{F}(i\tau,\ve{z}), \qquad \ve{F}(i\tau,\cdot) \text{ Lipschitz continuous},
\end{equation*}
we can use the integration algorithm proposed in Section~2.1.
This naturally leads to the following loss function:

\begin{align}\label{App:eq:ODELossStrong}
	\Loss(\veta;\,\mathcal{E}_{\text{var}},\mathcal{E}_{\text{step}},\mathcal{E}_{\text{int}}) 
	&= \Loss^\text{ode}(\veta;\,\mathcal{E}_{\text{var}},\mathcal{E}_{\text{step}},\mathcal{E}_{\text{int}}) + \Loss^\text{IC}(\veta) ,\\ \notag
	\Loss^\text{ode}(\veta;\,\mathcal{E}_{\text{var}},\mathcal{E}_{\text{step}},\mathcal{E}_{\text{int}}) 
	&=
	\sum_{k=1}^{K_t} \bigg|\veta(t_{k}) - \veta(t_{k-1}) - I_{\mathcal{E}_{\text{int}}}[\ve{F}(\cdot,\veta(\cdot)); t_{k-1},t_k]  \bigg|,\\ \notag
	\Loss^\text{IC}(\veta) 
	&=
	|\veta(0)-\vy_0|,
\end{align}
where $I_{\mathcal{E}_{\text{int}}}[\ve{F}(\cdot,\veta(\cdot)); a,b]$ denotes an approximation of the integral satisfying
\begin{equation}\label{App:eq:approximateIntegration}
	\left|I_{\mathcal{E}_{\text{int}}}[\ve{F}(\cdot,\veta(\cdot)); a,b] - \int_{a}^{b} \ve{F}(s,\veta(s)) \ddd s \right| \leq \mathcal{E}_{\text{int}}.
\end{equation}
Moreover, the points $0=t_0<t_1<...<t_{K_t}=T$ with $t_k=t_k(\veta, \mathcal{E}_{\text{var}},\mathcal{E}_{\text{step}})$ are assumed to satisfies the following two properties:
\begin{align}
	\label{App:eq:partitionCondition1}
	\EMVar(\veta,[t_{k-1}, t_k]) \leq \mathcal{E}_{\text{var}} \qquad \text{for all }k=1,...,K_t,\\
	\label{App:eq:partitionCondition2}
	\int_{t_{k-1}}^{t_k} \left|\ve{F}(s, \veta(s))\right| \ddd s \leq \mathcal{E}_\text{step}
	\qquad\text{ or }\qquad t_{k}-t_{k-1} \leq \mathcal{E}_\text{step}.
\end{align}

We can find a partition of $[0,T]$ satisfying (\ref{App:eq:partitionCondition1}) and (\ref{App:eq:partitionCondition2}) by iterative refinement. One example of an adaptive algorithm to compute $t_0, \dots,t_{K_t}$ with the above properties is given by Algorithm~\ref{App:algo:adaptivePartitionODE}. Clearly, for any Lipschitz continuous function on a closed interval, Algorithm~\ref{App:algo:adaptivePartitionODE} terminates in finite time.

\begin{algorithm}
	\caption{Algorithm to compute an adaptive partition of $[0,T]$.}
	\label{App:algo:adaptivePartitionODE}
	\begin{algorithmic}
		\Require network net, Boolean $\mathrm{flag}\in \{\mathrm{True}, \mathrm{False}\}$, tolerances $\mathcal{E}_\text{var}>0$,  $\mathcal{E}_\text{step}>0$ and end time $T>0$
		
		\If{$\mathrm{flag}=\mathrm{True}$}
		\State 
		$\text{partition} \gets (0,T)$
		\Else
		\State $N \gets \left\lceil \frac{\mathcal{E}_\text{step}}{T} \right\rceil$
		\State $\text{partition} \gets \left(0,\frac{1}{N}T, \frac{2}{N}T,\dots,T \right)$
		\EndIf
		\State variation $\gets$ calculateMVarOnPartition(net, partition)
		\If{flag = True}
		\State 
		integrals $\gets$ integrateOnPartition($s \mapsto \ve{F}(s,\text{net}(s))$, partition, tolerance=$\mathcal{E}_\text{step}/2$)
		\EndIf
		\While{$\max(\ \text{variation}\ ) > \mathcal{E}_\text{var} \text{ or } ( \text{flag and } \max(\ \text{integrals}\ ) > \mathcal{E}_\text{step}/2)$}
		\State indexTuple $\gets$  $\{i \setsep \text{variation}[i] > \mathcal{E}_\text{var} \text{ or } ( \text{flag and } \text{integrals}[i] > \mathcal{E}_\text{step}/2 \}$ ordered monotonously decreasing
		\For{ index $i$ in indexTuple}
		\State
		partition $\gets$ concatenate(partition[all elements with index $\leq i$],
		\\
		\hspace{6cm}
		$(\text{partition}[i]+\text{partition}[i+1])/2$,
		\\
		\hspace{6cm}
		partition[all elements with index $>i$])
		\EndFor
		\EndWhile
		\State
		\Return partition
	\end{algorithmic}
\end{algorithm}

The following theorem provides an error bound for
the proposed loss of a network with an architecture of type (A) or (A').
\begin{theorem}\label{App:thm:convergenceODE}
	Let $\vy\in C^1([0,T];\reals^N)$ be a solution of the ODE~(\ref{App:eq:ode_pointwise}) and $\veta=\veta(\mathcal{E}_{\text{var}},\mathcal{E}_{\text{step}}, \mathcal{E}_{\text{int}}, \mathcal{E}_{\text{loss}})$ be a network with an architecture of type (A) satisfying
	$
	\Loss(\veta;\,\mathcal{E}_{\text{var}},\mathcal{E}_{\text{step}},\mathcal{E}_{\text{int}}) \leq \mathcal{E}_{\text{loss}}.
	$
	Then there exist two constants $C,C'>0$ depending only on $T$, $\ve{F}$ and $\vy$ such that 
	\begin{equation*}
		\big|\veta(t)-\vy(t)\big| \leq C \left(\mathcal{E}_{\text{loss}}
		+K_t\mathcal{E}_{\text{int}}
		+\mathcal{E}_{\text{var}}
		+\mathcal{E}_{\text{step}}\right)
	\end{equation*}
	for all $(\mathcal{E}_{\text{loss}}
	+K_t\mathcal{E}_{\text{int}}
	+\mathcal{E}_{\text{var}}
	+\mathcal{E}_{\text{step}})< C'$ .
\end{theorem}
\begin{proof}
	The proof is a straightforward application of Gronwall's inequality.
\end{proof}
Without Lipschitz continuity of $\ve{F}$, the Gronwall argument used in the proof of Theorem~\ref{App:thm:convergenceODE} might not be applicable and furthermore uniqueness might fail. Nonetheless,
since $\Loss^\text{ode}$ is based on the integral formulation of the ODE, we are not restricted to settings with solutions in $C^1([0,T])$ satisfying (\ref{App:eq:ode_pointwise}) pointwise. As long as a suitable numerical integration $I[\cdot]$ is available, it is possible to work with less regular $\ve{F}$ with solutions $\vy$ with possibly discontinuous derivative. For this setting, we propose the following modification of the loss function
\begin{align} \label{App:eq:ODELossWeak}
	\Loss_n(\veta) &= 
	\Loss^{\text{ode}}\left(\veta, \epsilon_n, \epsilon_n, \frac{\epsilon_n}{K_t}\right) 
	+
	\Loss^\text{IC}(\veta) 
	+ \Loss^{\text{c}}(\veta,R),\\ \notag
	\Loss^{\text{c}}(\veta,R)&=
	\max\{0,\, R-\EMVar(\eta) \}.
\end{align}
Here, $(\epsilon_n)_{n\in\naturals}$ is a sequence converging to $0$ and $R$ is a positive constant. The addition of $\Loss^{\text{c}}$ for a fixed $R$ enforces precompactness in $L^p$, $1\leq p<\infty$, for any sequence with bounded $\Loss_n$. Of course, $R$ has to be chosen sufficiently large such that there exists a solution $\vy$ with $\MVar(\vy,[0,T]) \leq R$, otherwise no sequence of networks $(\veta_n)_{n\in\naturals}$ exists which satisfies (\ref{App:eq:convCon}).
On the other hand, we have the following result for every sequence satisfying (\ref{App:eq:convCon}).
\begin{theorem}\label{App:thm:convergenceODEWeak}
	Let $F\colon [0,T]\times \reals^N \to \reals^N$ be continuous in the second argument, Borel measurable and map bounded sets to bounded sets. Moreover, assume that there exists a quadrature $I_{1/n}$ for $F$ satisfying (\ref{App:eq:approximateIntegration}) for each network $\eta$ with an architecture of type (A). Every sequence of networks $(\eta_n)_{n\in\naturals}$ with architectures of type (A) satisfying 
	\begin{equation*}
		\Loss_{n}(\veta_n) \xrightarrow{n\to\infty}0
	\end{equation*}
	has a subsequence which converges in $L^1$ to a solution $\vy\in W^{1,1}([0,T];\reals^N)$ satisfying (\ref{App:eq:ode_int_form}). In particular, if there exists only one such solution on $[0,T]$ then
	\begin{equation*}
		\veta_n \xrightarrow{n\to\infty} \vy \qquad \text{in }L^1([0,T]).
	\end{equation*}
\end{theorem}
\begin{proof}
	Note that $|\veta_n(0)-\vy_0| \leq \Loss_n(\veta_n)$.
	Therefore, the boundedness of $\Loss^c(\veta_n,\,R)$ implies precompactness of the sequence $(\veta_n)_{n\in\naturals}$ in $L^p([0,T])$, $p\in [1,\infty)$. We may choose a subsequence $(\veta_{n_k})_{k\in\naturals}$ such that
	$\veta_{n_k} \to \vy\in L^\infty([0,T])$ in $L^1([0,T])$ and pointwise a.e. In particular, $s \mapsto \ve{F}(s,\veta_{n_k}(s))$ converges in $L^1([0,T],\reals^N)$ to the mapping $s \mapsto \ve{F}(s,\vy(s))$.
	Fixing $t\in [0,T]$ with
	$
	\veta_{n_k}(t) \xrightarrow{k\to\infty} \vy(t), 
	$
	choosing $l$ such that $t_{l} \leq t\leq t_{l+1}$ and defining $\mathcal{E}_\text{int}^k = \epsilon_{n_k}/K_t(\veta_{n_k})$ we derive
	\begin{align*}
		&\left|\vy(t)-\vy_0 - \int_{[0,t]} \ve{F}(s,\vy(s)) \ddd s\right| 
		\xleftarrow{k\to\infty}
		\left|\veta_{n_k}(t)-\vy_0 - \int_{[0,t]} \ve{F}(s,\veta_{n_k}(s)) \ddd s\right|
		\\
		&\quad\leq 
		\big|\veta_{n_k}(t_l)-\veta_{n_k}(t)\big|
		+
		\Big|
		\veta_{n_k}(t_l)-\veta_{n_k}(0)-
		I_{\mathcal{E}_\text{int}^k}[\ve{F}(\cdot,\veta_{n_k}(\cdot)); 0,t_l]
		\Big|
		+
		|\veta_{n_k}(0)-\vy_0|
		\\
		&\quad\phantom{\leq }
		+
		\left|I_{\mathcal{E}_{\text{int}}}[\ve{F}(\cdot,\veta_{n_k}(\cdot)); 0,t_l] - \int_{0}^{t_l} \ve{F}(s,\veta_{n_k}(s)) \ddd s \right|
		+
		\left| \int_{t_l}^{t} \ve{F}(s,\veta_{n_k}(s)) \ddd s \right|
		\xrightarrow{k\to\infty}0
		.
	\end{align*}
	In particular, by possible changing $\vy$ on a negligible set we derive that (\ref{App:eq:ode_int_form}) is satisfied.
\end{proof}

In both settings, assuming the existence of a solution $\vy\colon[0,T]\to \reals^N$ of (\ref{App:eq:ode_pointwise}) or (\ref{App:eq:ode_int_form}), respectively, there exist sequences $(\eta_n)_{n\in\naturals}$ of networks with architectures of type (A) with sufficiently small loss. For $\Loss_n$, we have to assume that $R\geq\MVar(\vy,[0,T])$.
\begin{theorem}[Existence of minimizers]\label{App:thm:ode_existence_of_minimizers}
	For any $\epsilon>0$, $\vy\in W^{1,1}([0,T]; \reals^N) $ and any Borel-measurable $\ve{F}\colon[0,T]\times \reals^N \to \reals^N$ mapping bounded sets to bounded sets (neither necessarily locally Lipschitz continuous with respect to the second argument nor necessarily continuous) satisfying (\ref{App:eq:ode_int_form}) there exists a network $\veta = \veta(\epsilon)$ with an architecture of type (A) satisfying
	\begin{enumerate}[(i)]
		\item $\esssup_{t\in[0,T]}|\veta(\epsilon)(t)-\vy(t)|_\infty \leq \epsilon$,
		\label{App:enum:ode_existence_1}
		\item $\EMVar(\eta_i(\epsilon);\,[0,T]) \leq \Var((y_i; \,[0,T])$ for all $i=1,\dots,N$,
		\label{App:enum:ode_existence_2}
		\item $|\EMVar(\eta_i(\epsilon);\,[a,b])-\Var(y_i;\,[a,b])|  \leq \epsilon $ 
		for all  $a,b\in[0,T]$ and all $i=1,\dots,N$,
		\label{App:enum:ode_existence_3}
		\item there exists $C$ independent of $\epsilon$ such that $\sup_{t\in[0,T]}|\ve{F}(t,\veta(\epsilon)(t))|_\infty\leq C$.
		\label{App:enum:ode_existence_4}
	\end{enumerate}
\end{theorem}
We start with some remarks on the consequences of Theorem~\ref{App:thm:ode_existence_of_minimizers}.
\begin{remark}
	Let $\delta\in(0, \mathcal{E}_\text{step}/\max\{1, 2C\})$ be such that 
	$\int_A |\vy'| \ddd t < \frac{\mathcal{E}_\text{var}}{2}$
	for all measurable $A\subset [0,T]$ with $|A|<\delta$.
	Together, properties (\ref{App:enum:ode_existence_3}) and (\ref{App:enum:ode_existence_4}) imply that any partition $0=t_0<t_1<\dots<t_{K_t}=T$ with 
	\begin{equation}\label{App:eq:boundOnMaximumStep}
		\max\{t_k-t_{k-1}\setsep k=1,\dots,K_t\} < \delta
	\end{equation}
	satisfies (\ref{App:eq:partitionCondition1}) and (\ref{App:eq:partitionCondition2}) for $\veta_\epsilon$ for all $\epsilon \leq \mathcal{E}_\text{var}/2$.
	In particular, Algorithm~\ref{App:algo:adaptivePartitionODE} terminates for $\veta(\epsilon)$, $\epsilon \leq \mathcal{E}_\text{var}/2 $, as soon as (\ref{App:eq:boundOnMaximumStep}) is satisfied. Moreover, the resulting partition has $K_t \leq \lceil 2T/\delta\rceil$.
\end{remark}
\begin{remark}
	If $\ve{F}$ is continuous in the second argument, property (\ref{App:enum:ode_existence_1}) guarantees the existence of $\veta(\epsilon)$ satisfying (\ref{App:enum:ode_existence_1})-(\ref{App:enum:ode_existence_4}) and
	\begin{equation*}
		\int_{[0,T]} \big|\ve{F}(t,\vy(t)) - \ve{F}(t,\veta(t))\big| \ddd t \leq \epsilon.
	\end{equation*} 
\end{remark}
\begin{remark}
	As a consequence of the previous two remarks, we have the following result: For every 
	$r, \mathcal{E}_{\text{var}},
	\mathcal{E}_{\text{step}},
	\mathcal{E}_{\text{int}}>0$, 
	$\ve{F}$ continuous in the second argument and $t_0,\dots,t_{K_t}$ chosen with Algorithm~\ref{App:algo:adaptivePartitionODE}, there exists $\epsilon_0$ such that for all $\epsilon<\epsilon_0$,
	\begin{equation*}
		\Loss^\text{ode}(\veta(\epsilon);\,\mathcal{E}_{\text{var}},\mathcal{E}_{\text{step}},\mathcal{E}_{\text{int}}) < r+K_t\mathcal{E}_{\text{int}}.
	\end{equation*}
\end{remark}
\begin{proof}[Proof of Theorem~\ref{App:thm:ode_existence_of_minimizers}]
	We fix $\vy$ to be the continuous representative with respect to equality almost everywhere.
	Denote by $\vy^+$, $\vy^-\colon [0,T]\to \reals^N$ componentwise nondecreasing functions with 
	\begin{align*}
		\vy(t) &= \vy_+(t)-\vy_-(t) \quad &\text{for all }t\in [0,T]
		\qquad \text{and}
		\\
		\pVar(\vy;a,b)&= \vy_+(b)+\vy_-(b)-\vy_+(a)-\vy_-(a)
		\quad &\text{for all }0\leq a \leq b \leq T.
	\end{align*}
	According to Theorem~\ref{App:thm:UniBMVapprox} there exists a network $\veta = \veta_+ + \veta_-$ with an architecture of type (A) and subnetworks $\veta_+$ and $\veta_-$ satisfying 
	\begin{equation*}
		\|\pm \veta_\pm-\vy^\pm\|< \epsilon/4,
		\qquad
		\MVar(\veta_\pm,\; [0,T]) \leq \MVar(\vy_\pm, \; [0,T]).
	\end{equation*}
	Thus, $\veta$ satisfies (\ref{App:enum:ode_existence_1})-(\ref{App:enum:ode_existence_3}). 
	Without loss of generality, we may assume $\epsilon<1$. We find $\sup_{t\in[0,T]}|\veta(t)|\leq 2+\sup_{t\in[0,T]}|\vy(t)|$ due to properties (\ref{App:enum:ode_existence_1}) and (\ref{App:enum:ode_existence_3}) imply. Thus, there exists $C>0$ independent of $\epsilon$ such that $\sup_{t\in[0,T]}|\ve{F}(t,\veta(t))| < C$.
\end{proof}

\section{Numerical test cases for ordinary differential equations}\label{App:sec:numExperiments_ODE}

We illustrate our theoretical findings from Section~\ref{App:sec:ApplicationToODEs} on two simple numerical test cases.
Theorems~\ref{App:thm:convergenceODE}, \ref{App:thm:convergenceODEWeak}, \ref{App:thm:ode_existence_of_minimizers} motivate the minimization of the loss functions defined in (\ref{App:eq:ODELossStrong}) and (\ref{App:eq:ODELossWeak}) over the set of neural networks. To avoid the optimization algorithm getting stuck in poor local minima, in practice it is preferred to optimize modified versions of the loss function. We used the following common modifications:
\begin{itemize}
	\item \textbf{Enforcement of the boundary condition in $\Loss^\text{ode}$}. Analogously to \cite{cvPINNs}, we replace $\eta(0)$ by $u_0$ in $\Loss^\text{ode}$ instead of enforcing $\Loss^\text{IC}$. Note that Theorem~\ref{App:thm:convergenceODE} remains valid with this change.  
	
	\item \textbf{Additional regularization penalty terms.} We add the penalty term
	$\Loss^\text{reg}(\eta) = \EMVar(\eta;\,[0,T])$
	to the loss function with a weight of $10^{-5}$.
	
	\item \textbf{Causal Training}. As proposed in \cite{wang2022respecting}, we replace $\Loss^\text{ode}$ by
	\begin{align*}
		\Loss^\text{ode}(\veta;\,\mathcal{E}_{\text{var}},\mathcal{E}_{\text{step}},\mathcal{E}_{\text{int}}) 
		&=
		\sum_{k=1}^{M} w_k\bigg|\veta(t_{k}) - \veta(t_{k-1}) - I_{\mathcal{E}_{\text{int}}}[\ve{F}(\cdot,\veta(\cdot)); t_{k-1},t_k]  \bigg|,\\
		w_k &= \exp\left(-\zeta\sum_{l=1}^{k} \bigg|\veta(t_{l}) - \veta(t_{l-1}) - I_{\mathcal{E}_{\text{int}}}[\ve{F}(\cdot,\veta(\cdot)); t_{l-1},t_l]  \bigg|\right)
		. 
	\end{align*}
	We set $\zeta=10$ in our experiments.
\end{itemize} 

The loss functions in (\ref{App:eq:ODELossStrong}) and (\ref{App:eq:ODELossWeak}) lead to $\ell^1$ like minimization. This makes gradient based optimization challenging. The gradient of sums of errors is simply the sum of the gradients of the errors:
$
E = E_1+E_2 \Rightarrow \nabla E = \nabla E_1 + \nabla E_2.
$
This corresponds to giving all errors the same weight when determining the update direction. 
With a $\ell^2$-type error, which is typically used in machine learning, the gradients are weighted with the corresponding error magnitude.
\begin{equation*}
	\tilde E = E_1^2+ E_2^2 \Rightarrow \nabla \tilde E = 2E_1\nabla E_1 + 2E_2\nabla E_2
\end{equation*}
The gradients of the largest errors have the strongest contribution to the learning direction. Heuristically, this reduces the risk of small errors having a large influence on the update direction. 
Consequently, we applied the following final modification to $\Loss^\text{ode}$ during the gradient based optimization.
\begin{align*}
	\Loss^\text{ode}(\veta;\,\mathcal{E}_{\text{var}},\mathcal{E}_{\text{step}},\mathcal{E}_{\text{int}})
	&=
	\sum_{k=1}^{M} \left(z_k+\frac{1}{2}z_k^2\right) 
	\\
	z_k 
	&\coloneqq 
	\bigg|\veta(t_{k}) - \veta(t_{k-1}) - I_{\mathcal{E}_{\text{int}}}[\ve{F}(\cdot,\veta(\cdot)); t_{k-1},t_k]  \bigg|
\end{align*}
By construction, our modification is greater than or equal to the unmodified version of $\Loss^\text{ode}$. In the modified $\Loss^\text{ode}$ intervals $[t_{k-1},t_k]$ with large errors have a larger impact on the parameter update. However, Table~\ref{App:tab:ODEerrors} indicates that this change barely impacted the performance.

For optimization of the loss function, we use the popular Adam optimizer \cite{kingma2017adam} with a learning rate of 0.01. We do not employ weight decay as we already penalize the variation of the network for regularization. Moreover, the values of the hyperparameters listed in Table~\ref{App:tab:numExperiments_hyperparametersettings} were used.

\begin{table}
	\centering
	\begin{tabular}{|c|c|c|c|c|c|}
		\hline 
		LSUV gain & depth	& width &activation & weight modifier  	& epochs \\
		\hline 
		1		& 3		& 20	&$\tanh$	& $[x\mapsto |x|]$	& 8192 \\
		\hline 
	\end{tabular} 
	\caption{Default hyperparameter settings.}
	\label{App:tab:numExperiments_hyperparametersettings}
\end{table}

We use network checkpointing during the training: We track the network parameters which lead to the smallest loss and load these parameters after the training. Here, we use the original unmodified loss function from Section~\ref{App:sec:ApplicationToODEs} as Theorem~\ref{App:thm:convergenceODE} provides error bounds depending on this loss. These parameters mostly do not coincide with the network parameters after the final epoch indicating an unstable training procedure.

To compute $(t_k)_{k=1}^M$, a slight modification of Algorithm~\ref{App:algo:adaptivePartitionODE} is employed. We still ensure that the resulting partition satisfies both (\ref{App:eq:partitionCondition1}) and (\ref{App:eq:partitionCondition2}). Instead of satisfying (\ref{App:eq:partitionCondition2}) directly, we ensure
$C \max\{t_k-t_{k-1}\setsep k=1,\dots,M\} \leq \mathcal{E}_\text{step},$
with $C\geq \sup\{F(\ve{z}) \setsep z\in \veta([0,T]) \}$. We can compute such a bound easily since $\veta$ is an EVNN using $|\veta(x)-\veta(y)| \leq |\EMVar(\veta; x,y)|$ and the Lipschitz continuity of $\ve{F}$.

To test the reliability of the algorithm, we used 100 runs with independently initialized networks for each test case and each setting. We present errors to the respective analytical reference solutions in Table~\ref{App:tab:ODEerrors}.

Motivated by Theorems~\ref{App:thm:convergenceODE} and \ref{App:thm:ode_existence_of_minimizers} we set
$\mathcal{E}_\text{step} = \mathcal{E}$,
$\mathcal{E}_\text{var} = \mathcal{E}$
and
$\mathcal{E}_\text{int} = \mathcal{E}/M$
for all experiments for the values of $\mathcal{E}>0$ specified in Table~\ref{App:tab:ODEerrors}.

\subsection{Exponential decay}\label{App:sec:odeExperiment_expDecay}
As a first simple benchmark to illustrate our theoretical results, we consider the following ODE
\begin{equation*}
	y'(x) = -y(x), \qquad y(0) = 1
\end{equation*}
on $[0,1]$ with solution $y(x) = \exp(-x)$. Table~\ref{App:tab:ODEerrors} and Figures~\ref{App:fig:exponential_decay}
illustrate the robustness of the algorithm with respect to initialization. For this test case we experimented with different values for $\mathcal{E}$. Surprisingly, even for relatively large values, the network achieves good accuracy even for the worst initializations, see Table~\ref{App:tab:ODEerrors}.

\subsection{Poisson equation in 1D with discontinuous coefficient}\label{App:sec:odeExperiment_1dPoisson}

As a simple toy problem with a non-smooth solution,
we consider the following ODE\footnote{The authors thank Boris Kaus (Mainz) for proposing this example to test the EVNNs.}
\begin{equation*}
	\frac{d}{dx} \left(k \frac{d T}{dx}\right) = 0, \qquad k(x) = \begin{cases}
		1 \qquad &\text{for }x<0.5,\\
		10 \qquad &\text{for }x\geq 0.5.
	\end{cases}
\end{equation*}
The boundary condition for the solution $T\colon [0,1] \to \reals$ are $T(0)=1$ and $T(1)=100$. We are interested in the unique weak solution $T\in H^1([0,1])$, i.e.,
\begin{equation*}
	\int_{[0,1]} \varphi'(x) k(x) T'(x) = 0 \qquad \text{for all }\varphi \in H^1_0([0,1]),
\end{equation*}
which is given by
\begin{equation*}
	T(x) = \frac{T(1)-T(0)}{\int_{0}^{1} \frac{1}{k(s)} \ddd s} \int_{0}^{x} \frac{1}{k(s)} \ddd s  + T(0).
\end{equation*}
Note that the solution is continuous but not in $C^1([0,1])$ and that there is a shock in the derivative of the solution for the specified $k$. Since our proposed algorithm works with the integral formulation of the ODE, the algorithm is still applicable to this problem, after reformulating the ODE as a system of ODEs solving for $(T, T'k)$:
\begin{equation*}
	\frac{d}{dx} T = \frac{(T'k)}{k}, \qquad \frac{d}{dx} (T'k) = 0.
\end{equation*}
Besides the non-smoothness of the first derivative, another challenge for this test case is the large boundary value $T(1)=100$. Nonetheless, the network performance is good and again stable with respect to initialization, see Figure~\ref{App:fig:1D_poisson} and Table~\ref{App:tab:ODEerrors}. We remark that we did not use the euclidean norm in $\reals^2$ but used the $\ell^1$ norm instead in $\Loss^\text{ode}$. Similarly, we used the sum of M-variation of the components of the network for $\Loss^\text{reg}$.

Since we are considering a boundary value problem for $T$, we did not employ causal training for this test case.

\begin{table}
	\centering
	\begin{tabular}{|c|c|c|c|c|c|c|}
		\hline       experiment& $\mathcal{E}$ & remark&  mean error & error std. & max error & min error
		\\
		\hline
		\ref{App:sec:odeExperiment_expDecay} & 0.1 &
		
		&2.10e-03&5.46e-04&3.42e-03&1.07e-03
		\\\hline
		
		\ref{App:sec:odeExperiment_expDecay} & 0.01 &
		
		&7.04e-04&2.64e-04&1.48e-03&2.56e-04
		\\\hline
		
		\ref{App:sec:odeExperiment_expDecay} & 0.001 &
		
		&7.45e-04&2.81e-04&1.52e-03&2.50e-04
		\\\hline
		
		\ref{App:sec:odeExperiment_expDecay} &0.01 & no checkp. 
		
		&1.06e-02&6.46e-03&3.45e-02&1.33e-03
		\\\hline
		
		\ref{App:sec:odeExperiment_expDecay} & 0.01 & MAE loss
		
		&7.11e-04&2.73e-04&1.38e-03&3.02e-04
		\\\hline
		
		\ref{App:sec:odeExperiment_1dPoisson} & 0.1 & no cau. tr.
		
		&8.24e-03&5.33e-04&9.42e-03&7.23e-03
		\\\hline
	\end{tabular}
	\caption{Relative errors for ODEs with respect to the uniform norm. $\mathcal{E}$ is the chosen value for $M\mathcal{E}_\text{int}$, $\mathcal{E}_\text{var}$ and $\mathcal{E}_\text{step}$. In the third to last row, no checkpointing was used. In the second to last row, the mean absolute error based loss as in Section~\ref{App:sec:ApplicationToODEs} was employed instead of our modification of the $\ell^1$ Loss. In the last row, no causal training was used.}
	\label{App:tab:ODEerrors}
\end{table}

\begin{figure}\centering
	\captionbox{Learning exponential decay by enforcing the ODE. The reference solution as well as the average and range at each point in time of 100 network runs are plotted. We show the network solution with $\mathcal{E}=0.1$.\label{App:fig:exponential_decay}}{\includegraphics[width=0.4\linewidth]{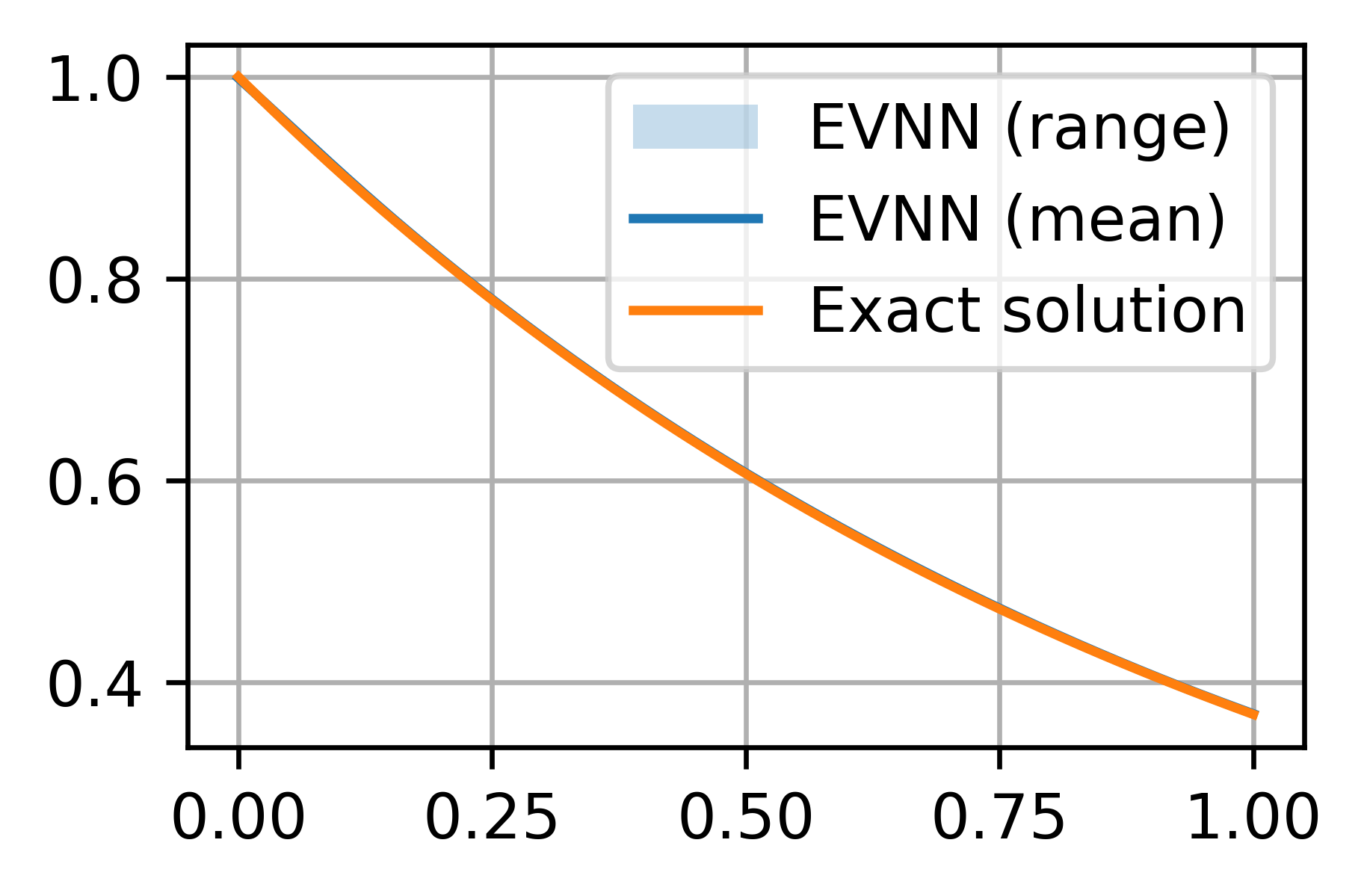}}
	\hspace{0.1\textwidth}
	\captionbox{Poisson problem in one dimension with non-smooth solution. The reference solution as well as the average and range at each point in time of 100 network runs are plotted.\label{App:fig:1D_poisson}}{\includegraphics[width=0.4\linewidth]{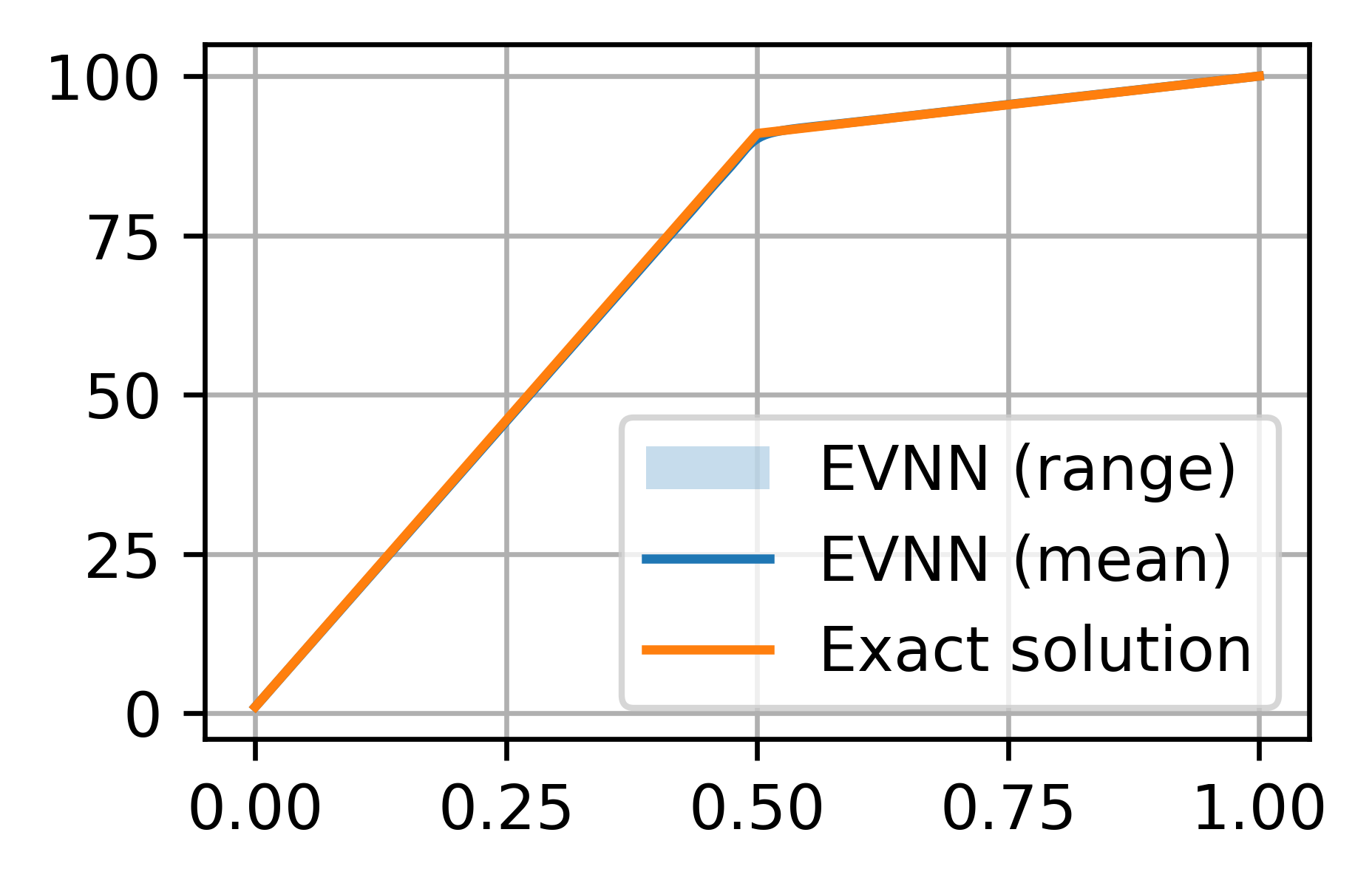}}
\end{figure}

\section{Additional material regarding the numerical simulations for the Burgers equation}\label{App:sec:AdditionalNumericalBurgersRersults}

We present additional numerical results for the test cases we investigated. Figures~\ref{App:fig:2DStandingShock}, \ref{App:fig:2DMovingShock} and \ref{App:fig:2DRarefaction} show the results of the first trained network for the test cases investigated in Sections 5.1.1 and 5.1.2. 
Figure~\ref{App:fig:stdPlots} shows the empirical standard deviation with respect to network initialization for the Burgers test cases.
\begin{figure}
	\begin{subfigure}[b]{0.49\textwidth}
		\centering
		\includegraphics[width=0.8\linewidth]{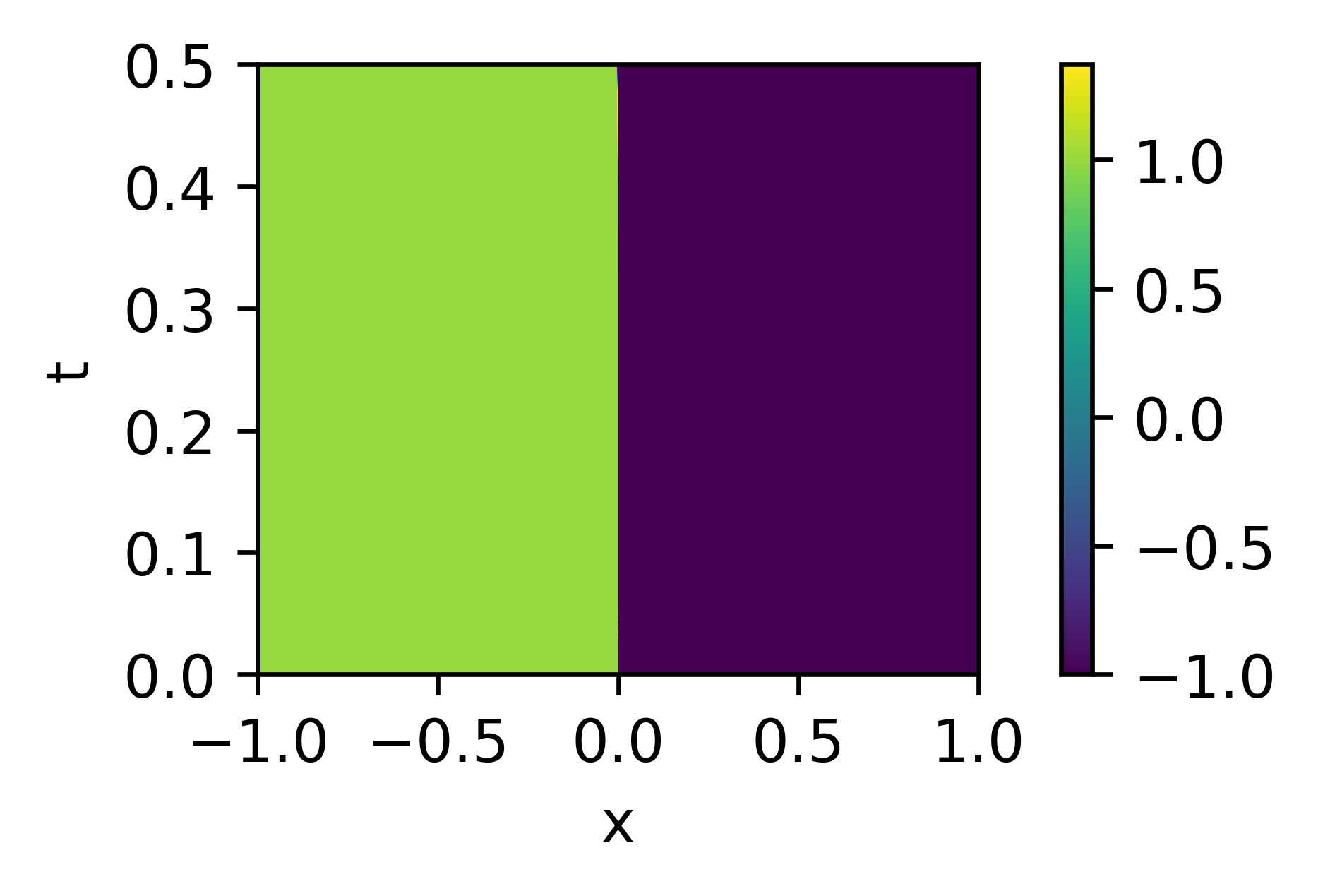}
		\caption{Network solution.}
		\label{App:fig:2DStandingShock_net}
	\end{subfigure}
	\begin{subfigure}[b]{0.49\textwidth}
		\centering
		\includegraphics[width=0.8\linewidth]{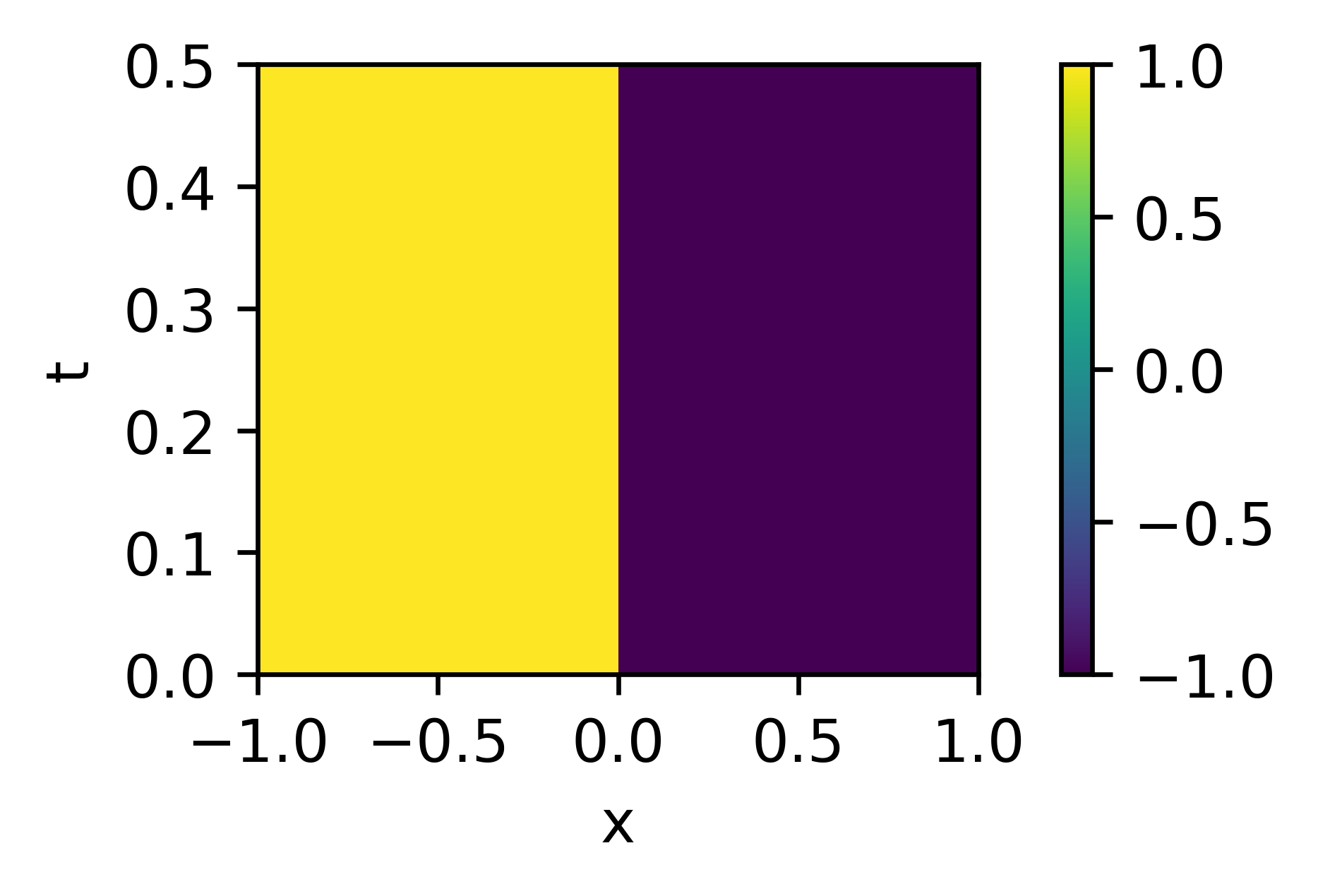}
		\caption{Reference solution.}
		\label{App:fig:2DStandingShock_ref}
	\end{subfigure}
	
	\begin{subfigure}[b]{0.49\textwidth}
		\centering
		\includegraphics[width=0.8\linewidth]{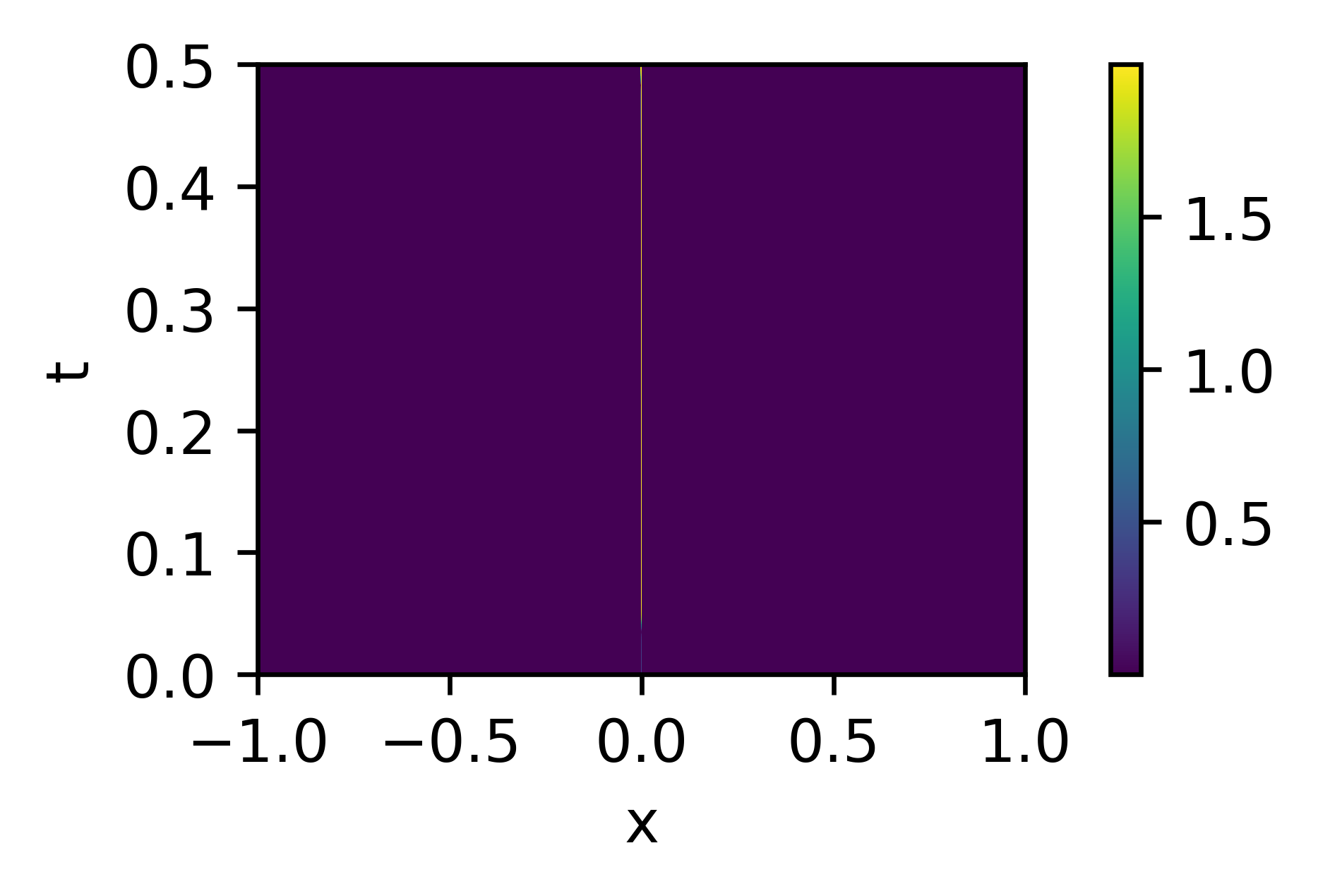}
		\caption{Error.}
		\label{App:fig:2DStandingShock_err}
	\end{subfigure}
	\begin{subfigure}[b]{0.49\textwidth}
		\centering
		\includegraphics[width=0.8\linewidth]{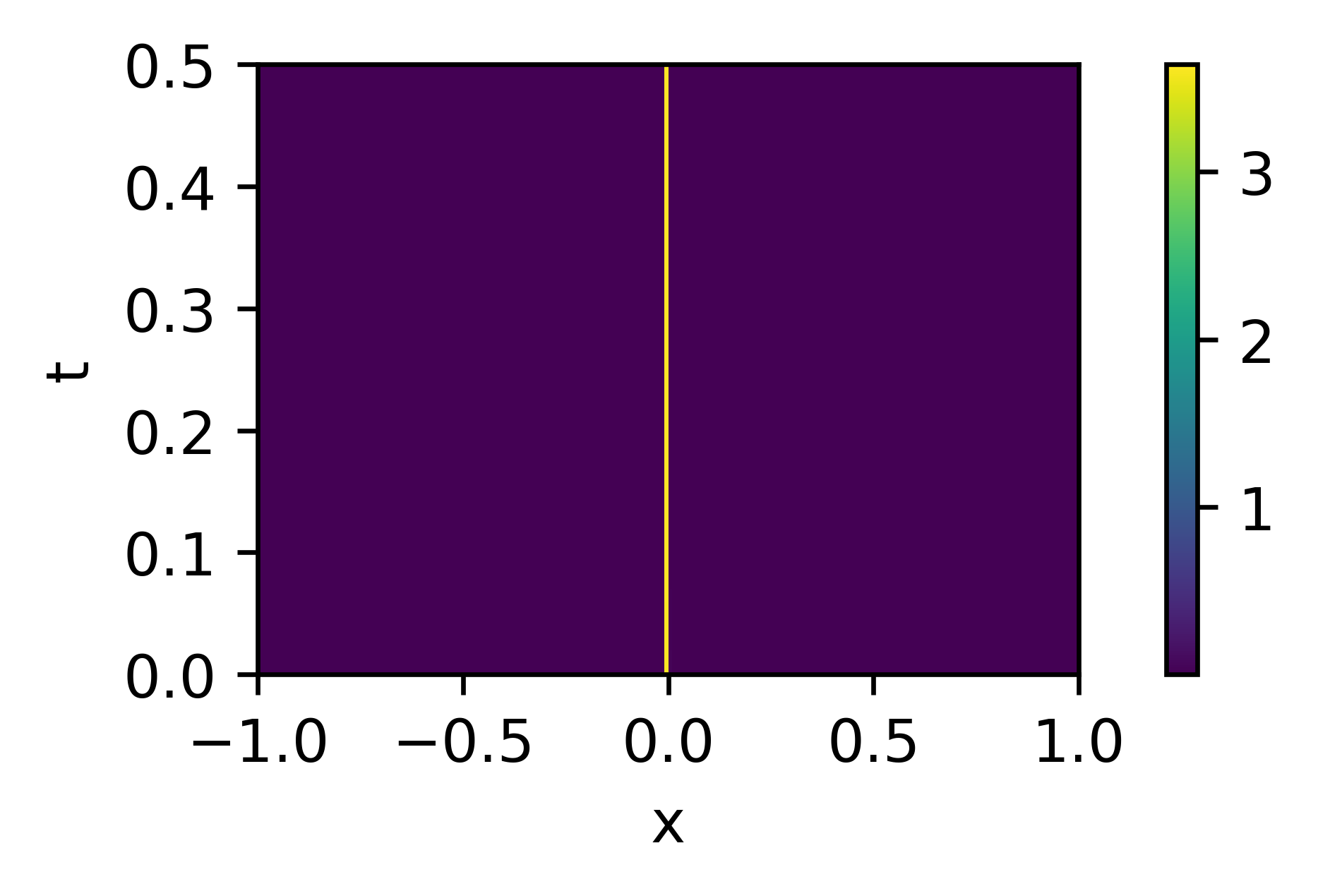}
		\caption{Estimated M-variation on a $100\times100$ grid.}
		\label{App:fig:2DStandingShock_var}
	\end{subfigure}
	\caption{The first network initialization trained for the standing shock test case.}
	\label{App:fig:2DStandingShock}
\end{figure}

\begin{figure}
	\begin{subfigure}[b]{0.49\textwidth}
		\centering
		\includegraphics[width=0.8\linewidth]{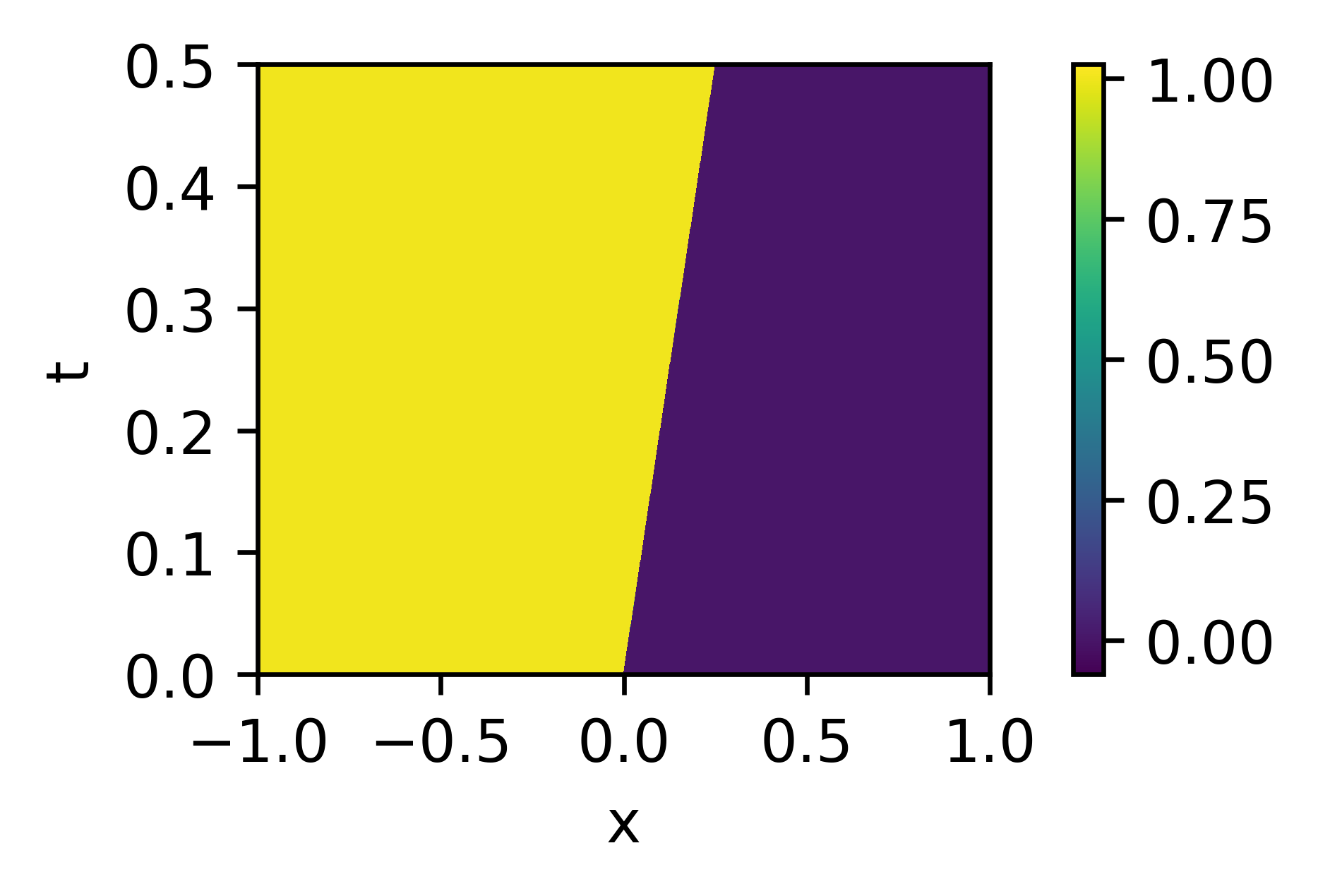}
		\caption{Network solution.}
		\label{App:fig:2DMovingShock_net}
	\end{subfigure}
	\begin{subfigure}[b]{0.49\textwidth}
		\centering
		\includegraphics[width=0.8\linewidth]{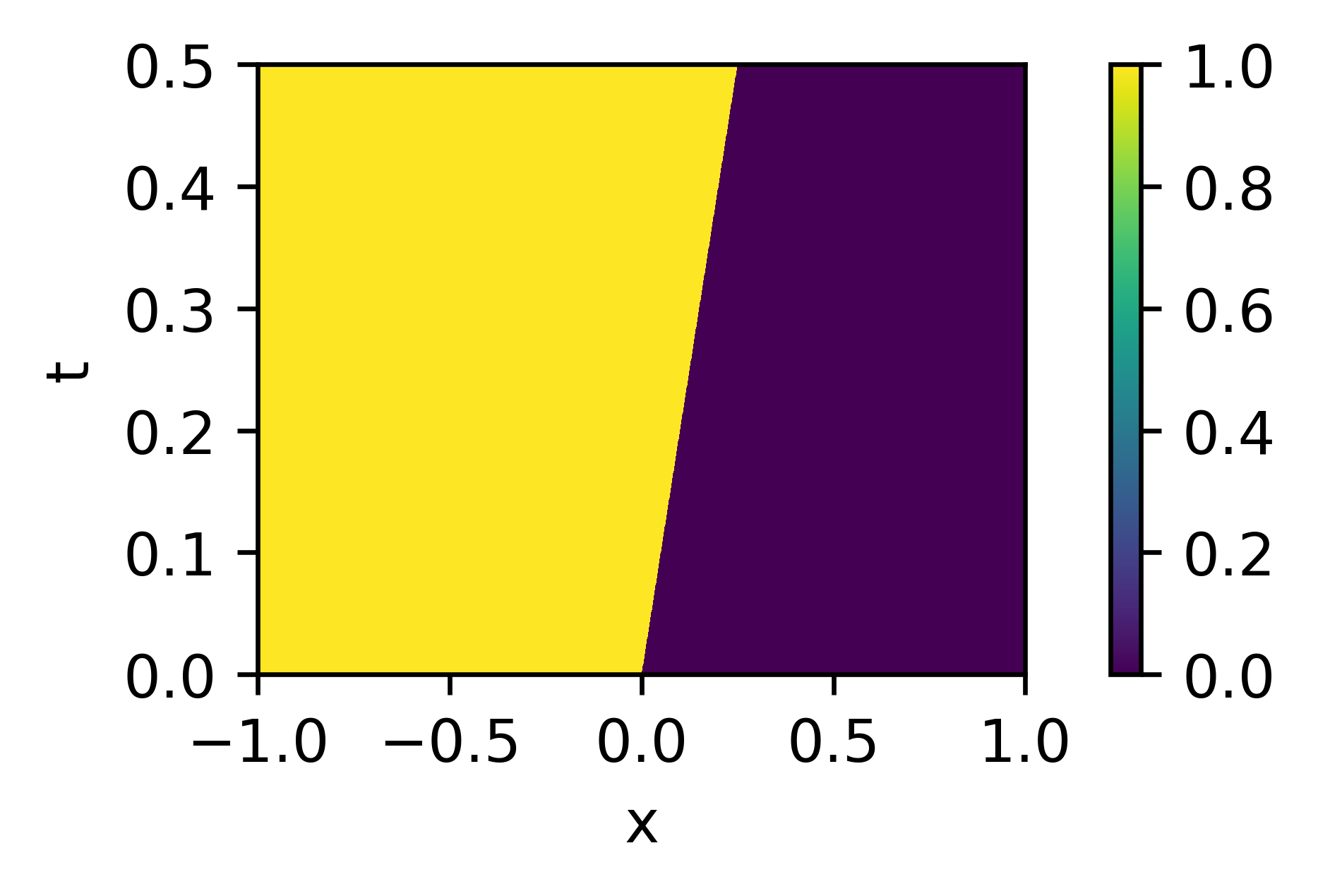}
		\caption{Reference solution.}
		\label{App:fig:2DMovingShock_ref}
	\end{subfigure}
	
	\begin{subfigure}[b]{0.49\textwidth}
		\centering
		\includegraphics[width=0.8\linewidth]{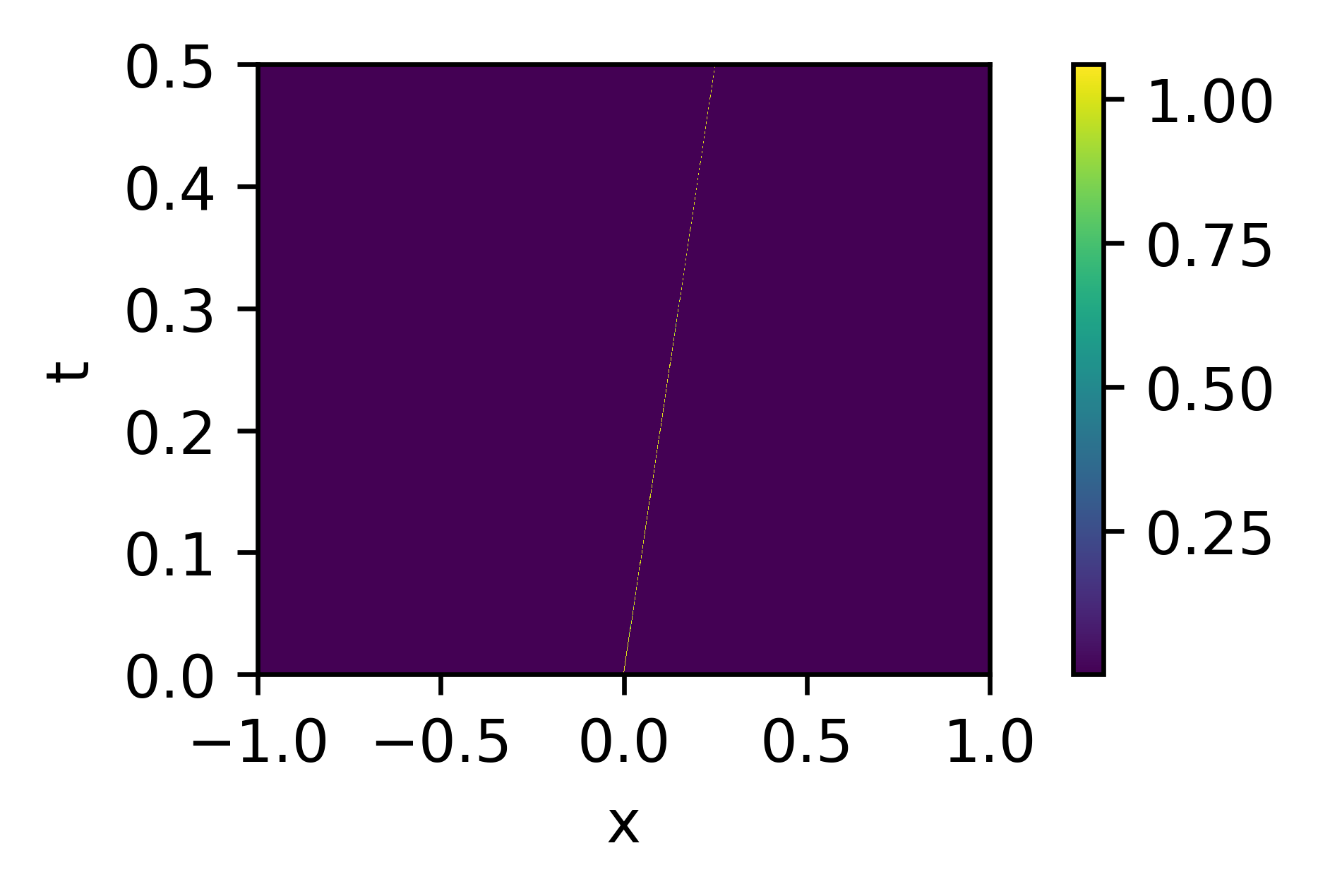}
		\caption{Error.}
		\label{App:fig:2DMovingShock_err}
	\end{subfigure}
	\begin{subfigure}[b]{0.49\textwidth}
		\centering
		\includegraphics[width=0.8\linewidth]{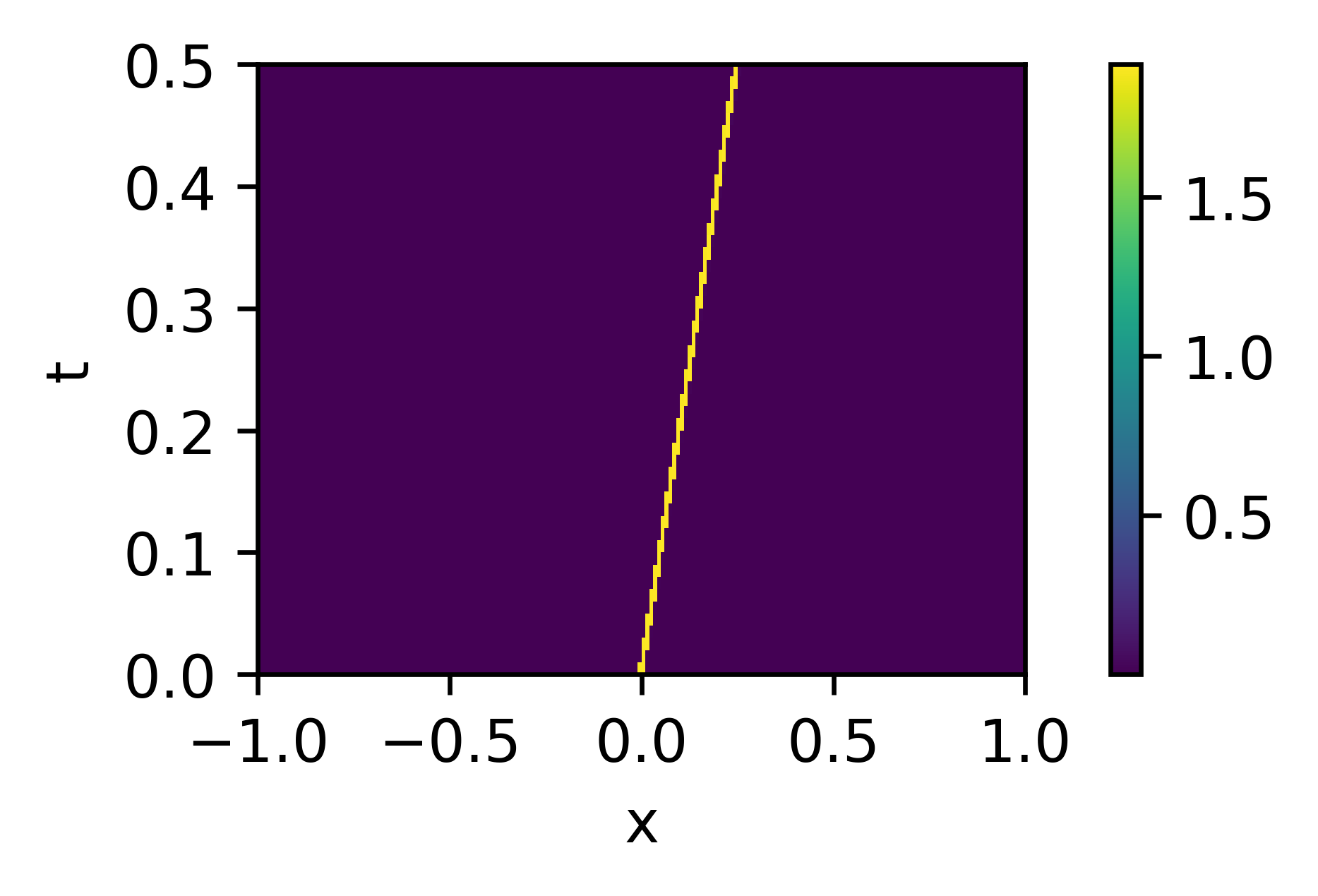}
		\caption{Estimated M-variation on a $100\times100$ grid.}
		\label{App:fig:2DMovingShock_var}
	\end{subfigure}
	\caption{The first network initialization trained for the moving shock test case.}
	\label{App:fig:2DMovingShock}
\end{figure}	

\begin{figure}
	\begin{subfigure}[b]{0.49\textwidth}
		\centering
		\includegraphics[width=0.8\linewidth]{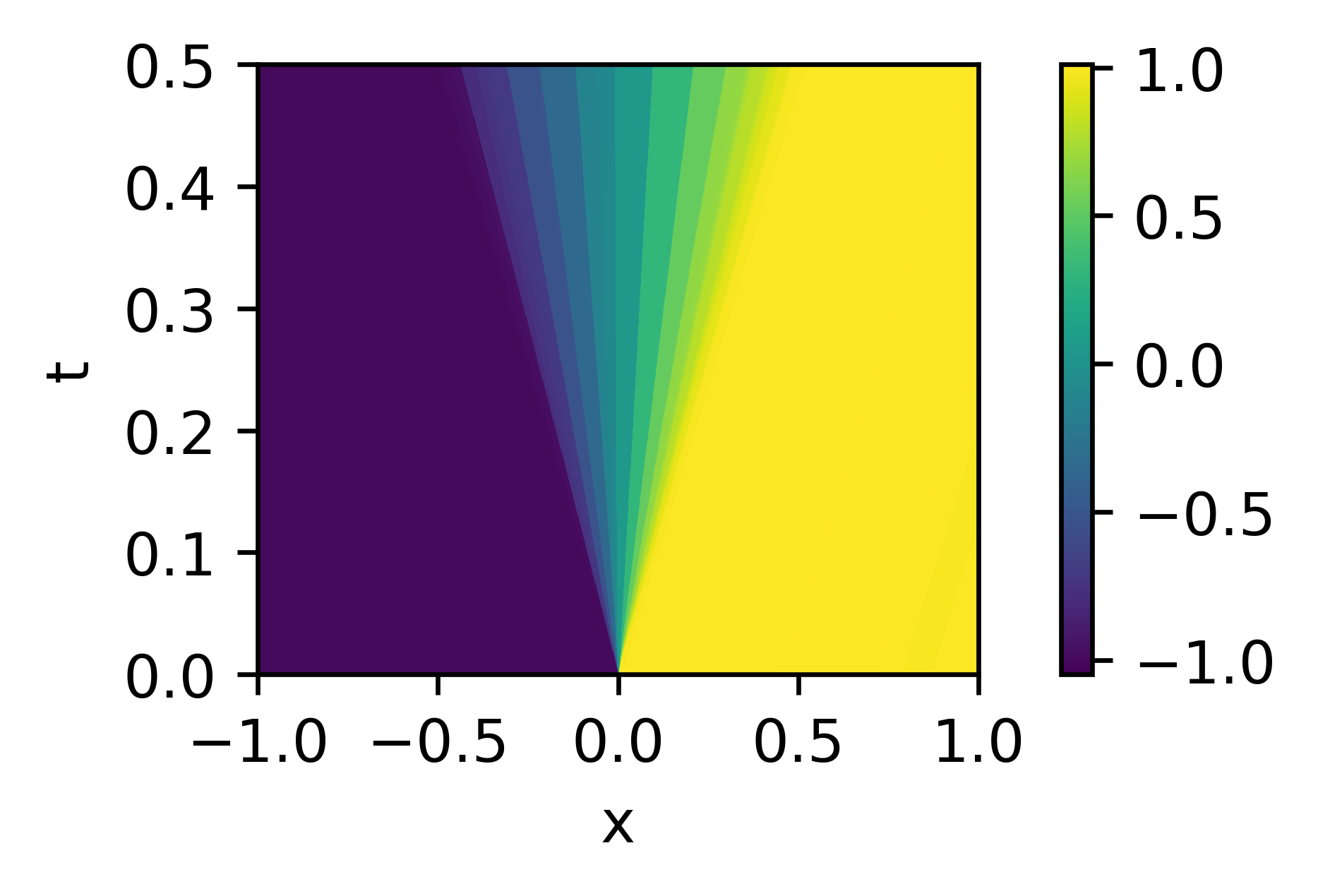}
		\caption{Network solution.}
		\label{App:fig:2DRarefaction_net}
	\end{subfigure}
	\begin{subfigure}[b]{0.5\textwidth}
		\centering
		\includegraphics[width=0.8\linewidth]{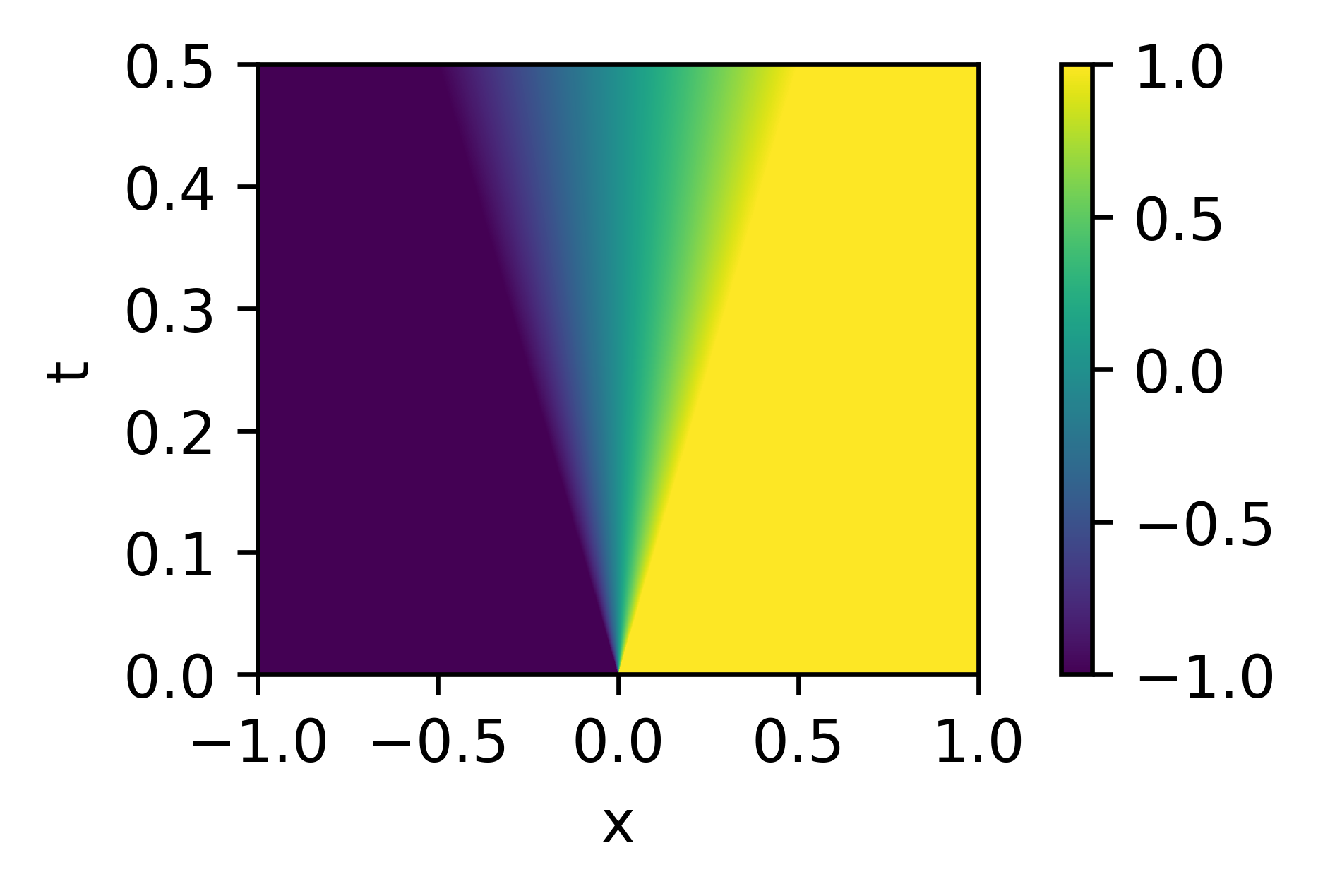}
		\caption{Reference solution.}
		\label{App:fig:2DRarefaction_ref}
	\end{subfigure}
	
	\begin{subfigure}[b]{0.49\textwidth}
		\centering
		\includegraphics[width=0.8\linewidth]{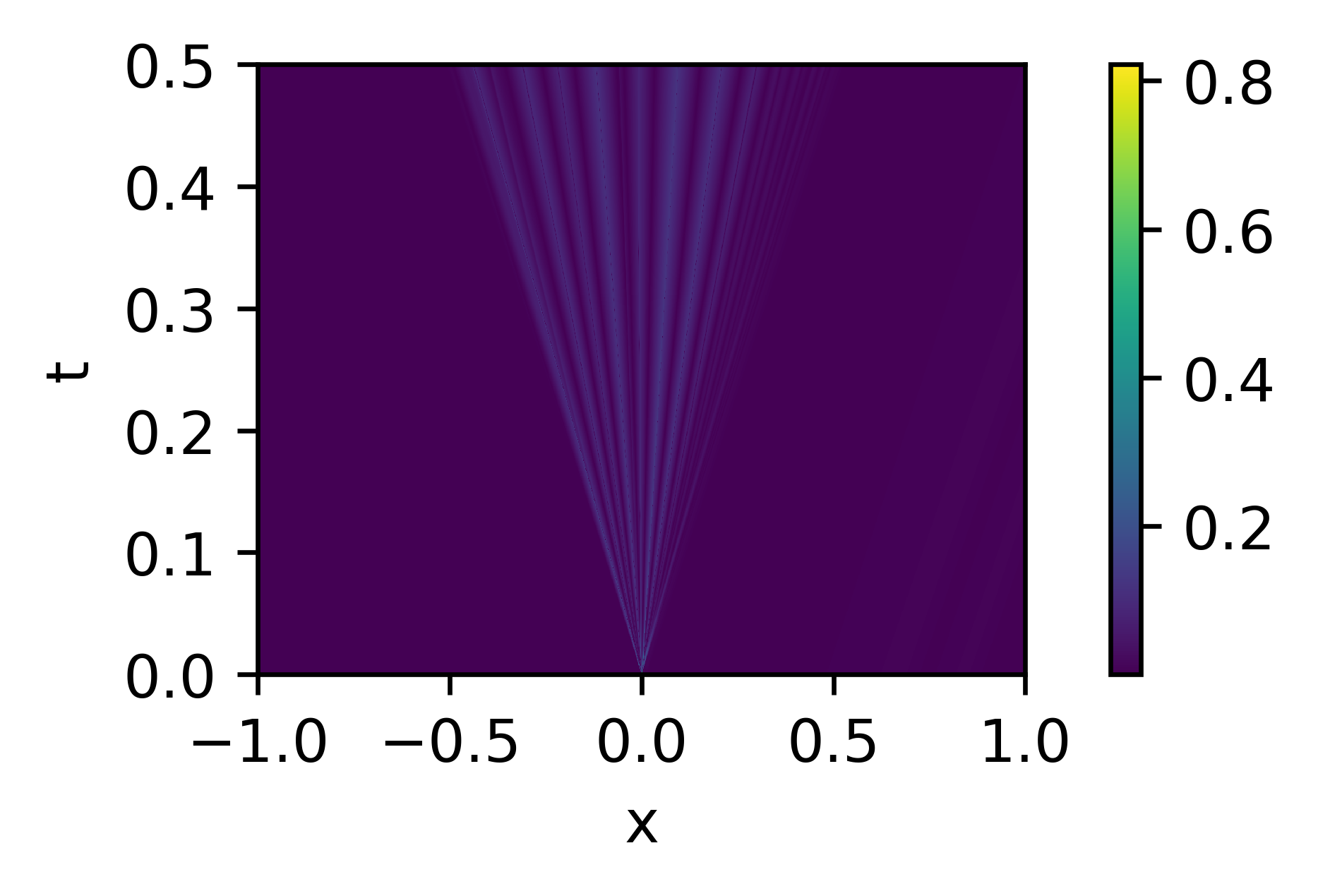}
		\caption{Error.}
		\label{App:fig:2DRarefaction_err}
	\end{subfigure}
	\begin{subfigure}[b]{0.49\textwidth}
		\centering
		\includegraphics[width=0.8\linewidth]{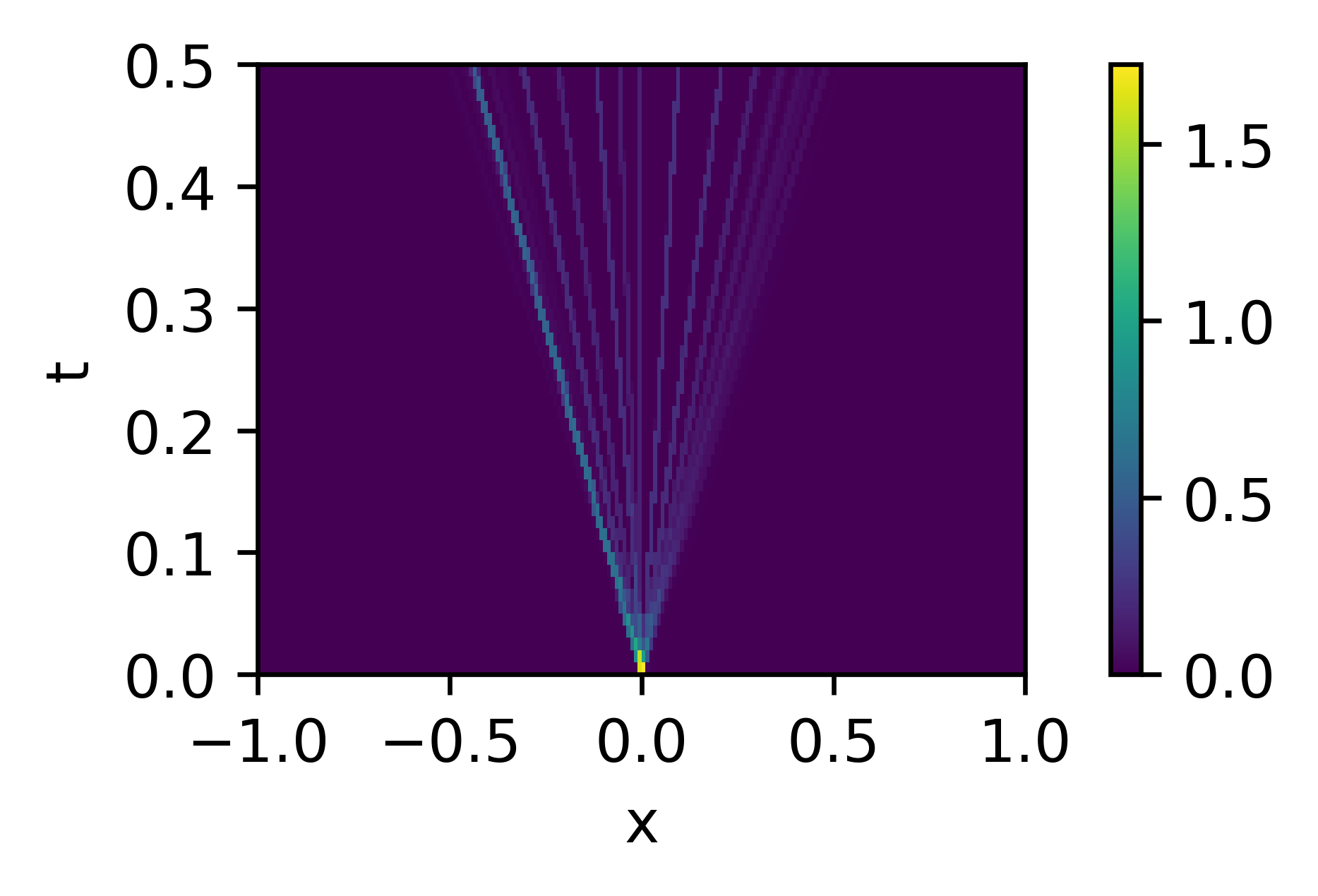}
		\caption{Estimated M-variation on a $100\times100$ grid.}
		\label{App:fig:2DRarefaction_var}
	\end{subfigure}
	\caption{The first network initialization trained for the rarefaction wave test case.}
	\label{App:fig:2DRarefaction}
\end{figure}

\begin{figure}
	\begin{subfigure}[b]{0.49\textwidth}
		\centering
		\includegraphics[width=0.8\linewidth]{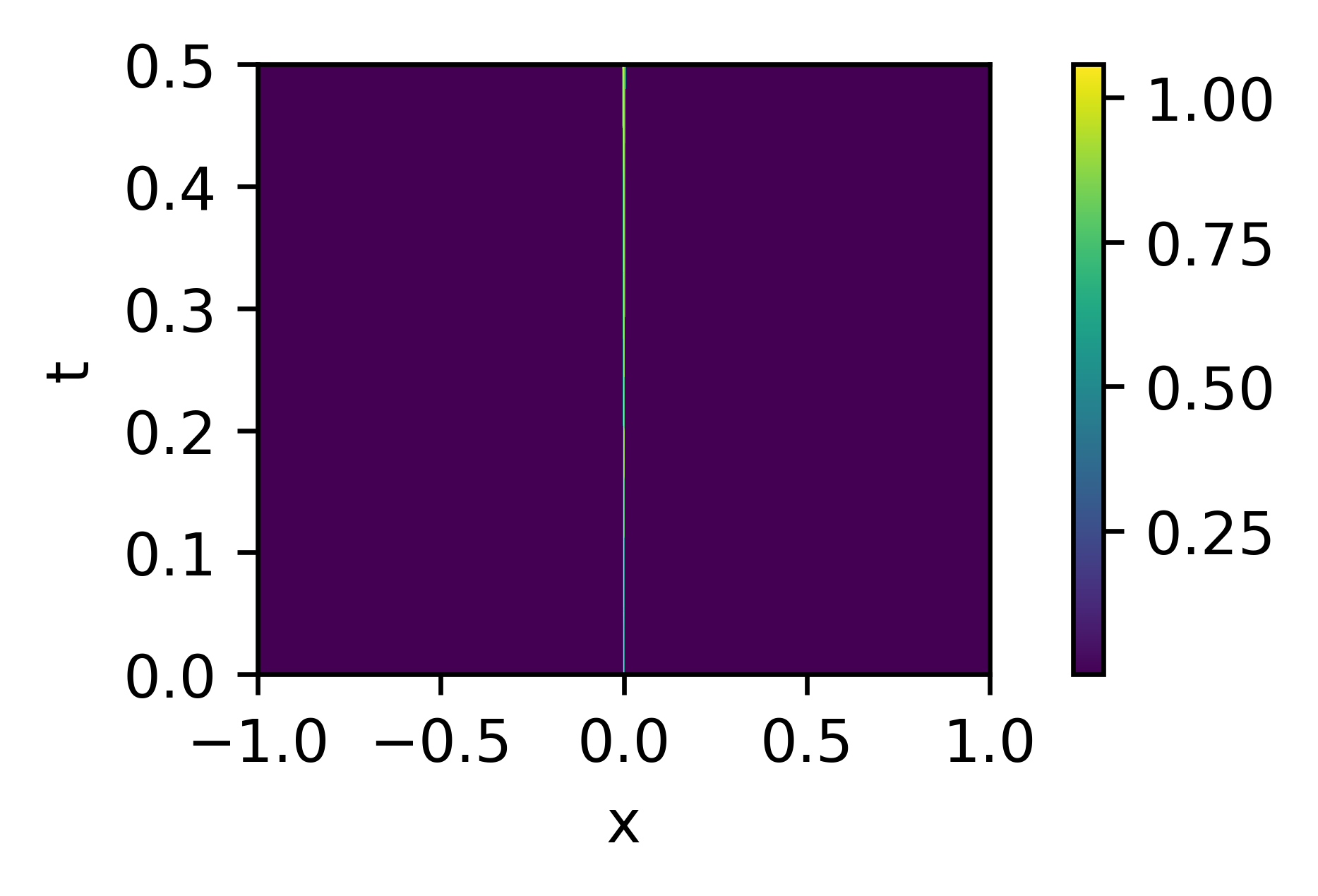}
		\caption{Standing shock.}
		\label{App:fig:StandingShockStd}
	\end{subfigure}
	\begin{subfigure}[b]{0.49\textwidth}
		\centering
		\includegraphics[width=0.8\linewidth]{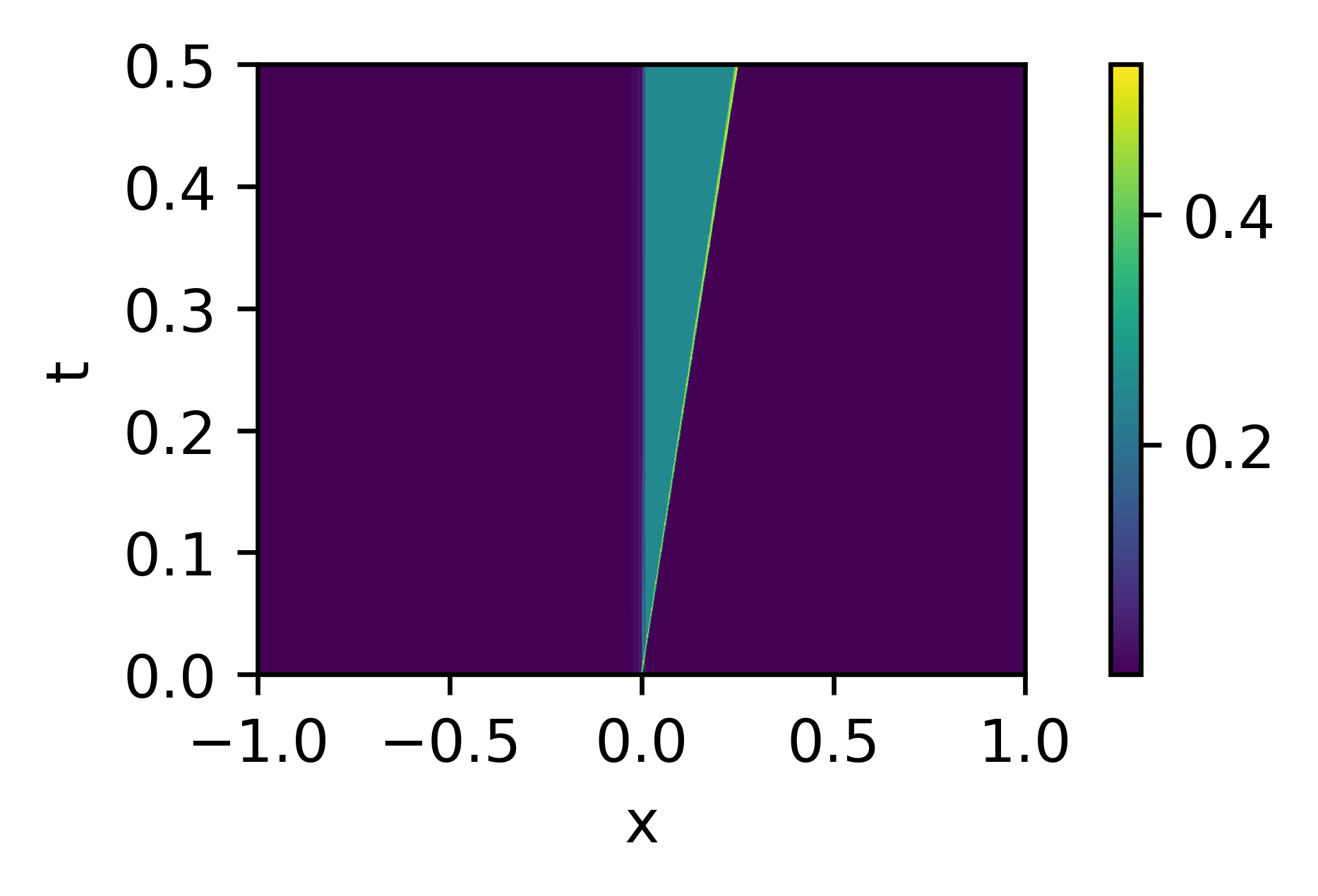}
		\caption{Moving shock.}
		\label{App:fig:MovingShockStd}
	\end{subfigure}
	
	\begin{subfigure}[b]{0.49\textwidth}
		\centering
		\includegraphics[width=0.8\linewidth]{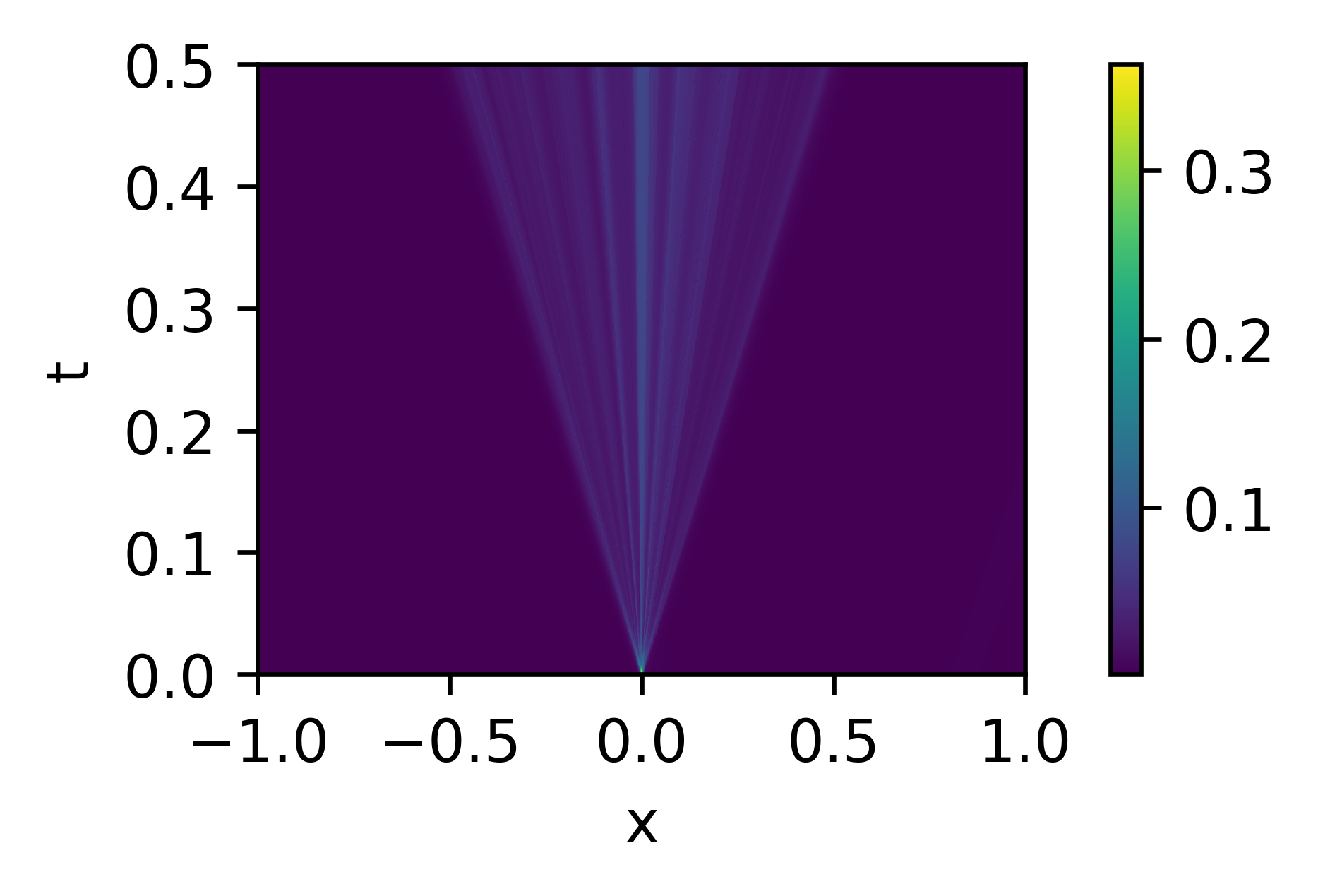}
		\caption{Rarefaction wave.}
		\label{App:fig:RarefactionStd}
	\end{subfigure}
	\begin{subfigure}[b]{0.49\textwidth}
		\centering
		\includegraphics[width=0.8\linewidth]{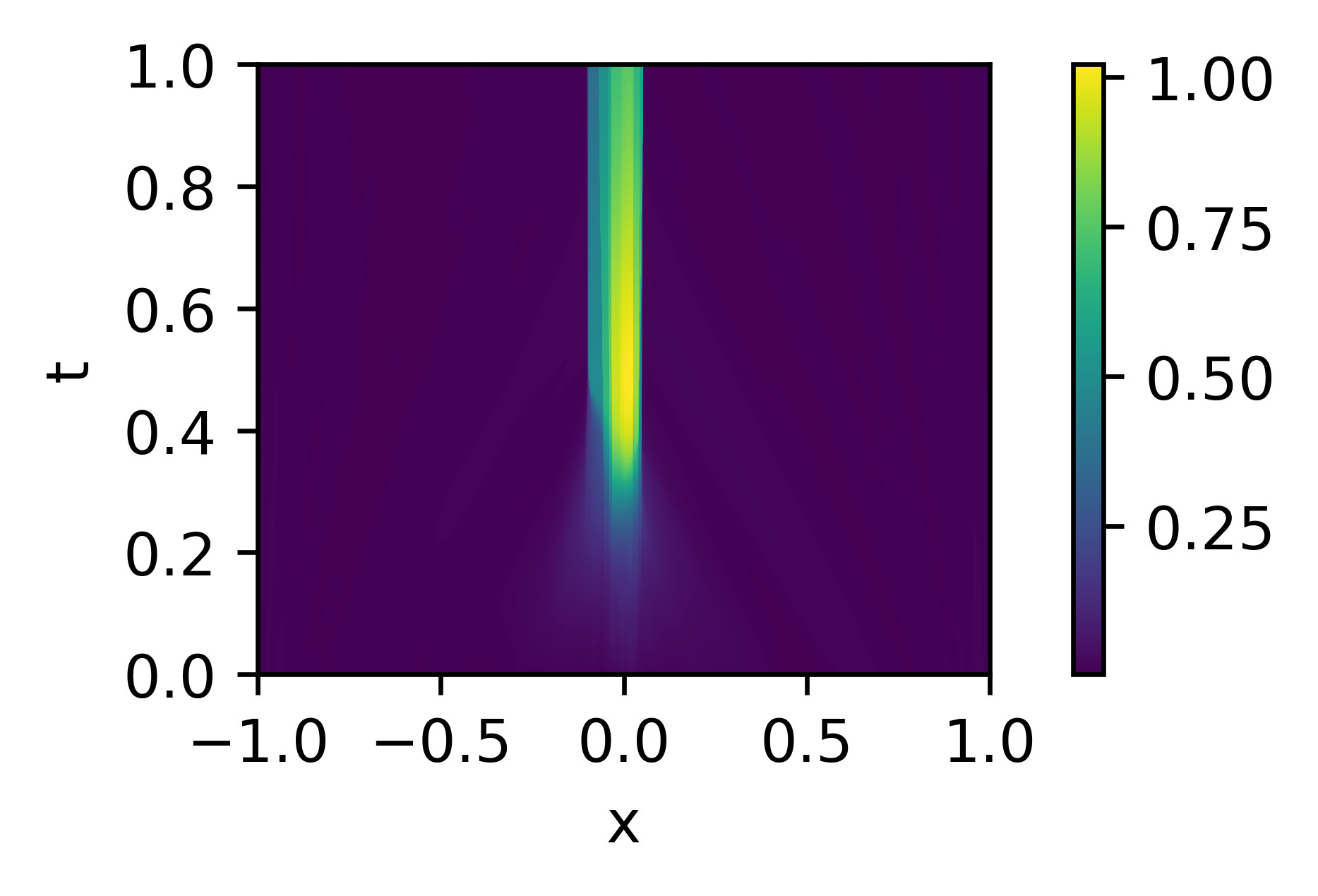}
		\caption{Sine initial data.}
		\label{App:fig:SineInitialDataStd}
	\end{subfigure}
	\caption{Pointwise empirical standard deviation of the 16 network initializations after training.}
	\label{App:fig:stdPlots}
\end{figure}
\end{document}